\documentclass[12pt]{amsart}
\usepackage{amscd}
\usepackage{amsmath}
\usepackage{verbatim}

\usepackage[cmtip,arrow]{xy}

\usepackage{pb-diagram,pb-xy}




\textwidth 16cm
\textheight 22cm
\headheight 0.5cm
\evensidemargin 0.3cm
\oddsidemargin 0.2cm

\usepackage{amssymb}

\numberwithin{equation}{section}

\newtheorem{thm}[equation]{Theorem}
\newtheorem{prop}[equation]{Proposition}

\newtheorem{lemma}[equation]{Lemma}
\newtheorem{cor}[equation]{Corollary}

\theoremstyle{definition}
\newtheorem{rem}[equation]{Remark}
\newtheorem{example}[equation]{Example}
\newtheorem{dfn}[equation]{Definition}

\newcommand{\onto}{\rightarrow\!\!\rightarrow}
\newcommand{\codim}{\operatorname{codim}}

\newcommand{\Br}{\mathop{\mathrm{Br}}}

\renewcommand{\Im}{\mathop{\mathrm{Im}}}

\newcommand{\CH}{\mathop{\mathrm{CH}}\nolimits}

\newcommand{\BCH}{\mathop{\overline{\mathrm{CH}}}\nolimits}

\newcommand{\Ch}{\mathop{\mathrm{Ch}}\nolimits}
\newcommand{\BCh}{\mathop{\overline{\mathrm{Ch}}}\nolimits}

\newcommand{\cd}{\mathop{\mathrm{cdim}}\nolimits}

\newcommand{\inc}{\operatorname{\mathit{in}}}

\newcommand{\Z}{\mathbb{Z}}
\newcommand{\F}{\mathbb{F}}
\newcommand{\A}{\mathbb{A}}

\newcommand{\HH}{\mathbb{H}}

\newcommand{\Gm}{\mathbb{G}_{\mathrm{m}}}
\newcommand{\Ga}{\mathbb{G}_{\mathrm{a}}}
\newcommand{\G}{\mathbb{G}}
\newcommand{\cF}{\mathcal F}
\newcommand{\cO}{\mathcal O}
\newcommand{\cE}{\mathcal E}
\newcommand{\cT}{\mathcal T}
\newcommand{\cA}{\mathcal A}

\newcommand{\cS}{\mathcal S}

\newcommand{\Mor}{\operatorname{Mor}}
\newcommand{\Spec}{\operatorname{Spec}}
\newcommand{\End}{\operatorname{End}}
\newcommand{\Hom}{\operatorname{Hom}}

\newcommand{\PS}{\mathbb{P}}

\newcommand{\Sum}{\operatornamewithlimits{\textstyle\sum}}
\newcommand{\Oplus}{\operatornamewithlimits{\textstyle\bigoplus}}

\newcommand{\disc}{\operatorname{disc}}

\newcommand{\<}{\left<}
\renewcommand{\>}{\right>}

\marginparwidth 2.5cm

\newcommand{\compose}{\circ}

\newcommand{\Ker}{\operatorname{Ker}}

\renewcommand{\phi}{\varphi}

\newcommand{\WR}{\mathcal{R}}

\newcommand{\Fsep}{F_{\mathrm{sep}}}

\usepackage[hypertex]{hyperref}

\title
{Unitary grassmannians}

\keywords
{Algebraic groups, hermitian and quadratic forms,
projective homogeneous varieties,
Chow groups and motives.
{\em Mathematical Subject Classification (2010):}
14L17; 14C25}

\author
{Nikita A. Karpenko}

\address
{
Universit\'e Pierre et Marie Curie\\
Institut de Math\'ematiques de Jussieu\\
Paris\\
FRANCE}

\address
{{\it Web page:}
{\tt www.math.jussieu.fr/\~{ }karpenko}}

\email {karpenko {\it at} math.jussieu.fr}

\date
{9 October 2011. Revised: 15 February 2012}


\begin{document}

\begin{abstract}
We study projective homogeneous varieties under an action of
a projective unitary group (of outer type).
We are especially interested in the case of
(unitary) grassmannians of totally isotropic subspaces of a hermitian form over a field, the main
result saying that these grassmannians are $2$-incompressible if the hermitian form
is generic.
Applications to orthogonal grassmannians are provided.
\end{abstract}

\maketitle

\tableofcontents

\section
{Introduction}

In this paper, we study Chow rings, Chow motives, and incompressibility properties of
projective homogeneous varieties under an action of an adjoint absolutely simple
affine algebraic group of type $\cA$ over an arbitrary field $F$.
Such a group can be realized as the $F$-group of $K$-automorphisms of an
Azumaya $K$-algebra with an $F$-linear unitary involution, where $K$ is a
quadratic \'etale $F$-algebra.
The case where $K$ is split is pretty much studied in the literature (the varieties arising in that case are the generalized
Severi-Brauer varieties and, more generally, varieties of flags of right ideals of a central simple $F$-algebra).
In the present paper we are concentrated on the opposite case: $K$ is a
field, but the central simple $K$-algebra is split, i.e., isomorphic to
the algebra of endomorphisms of a finite-dimensional vector space $V$ over $K$.
The involution on the algebra in this case is adjoint to some
non-degenerate $K/F$-hermitian form on $V$.

Let $K/F$ be a separable quadratic field extension, $V$ a vector space
over $K$ endowed with a non-degenerate $K/F$-hermitian form $h$.
By a Jacobson theorem (cf. Corollary \ref{h i q}),
this classical object is completely determined by an even more classical
one:
the quadratic form $v\mapsto h(v,v)$
on $V$ considered this time as a vector space over $F$.
Moreover, the quadratic forms arising this way from $K/F$-hermitian forms, are easily described as the
tensor products of a non-degenerate (diagonal) bilinear form by the norm form of $K/F$ (which is a non-degenerate binary
quadratic form, the quadratic form associated to
the hermitian form $\<1\>$ on $K$).

Although this well-known and elementary observation shows that the study of hermitian
forms is equivalent to the study of the ``binary divisible'' quadratic forms,
it does not show that the hermitian forms
are not interesting because they are just a more complicated replacement for a well-understood thing.
Quite the contrary,
first of all this observation shows that the binary divisible quadratic
forms form an important class of quadratic forms.
%
On the other hand,
this observation provides a hope that the world around hermitian forms might give
additional tools to study such quadratic forms. And indeed, it turns out
that the projective homogeneous varieties related to hermitian forms (in particular, the unitary grassmannians)
are more suitable
(in particular, more ``economical'')
geometric objects related to the binary divisible quadratic forms if
compared with the orthogonal grassmannians
-- the geometric objects
commonly attached to quadratic forms.
The present paper demonstrates and exploits these phenomena.

The main result of the paper is Theorem \ref{main} saying that the unitary
grassmannians associated to a generic hermitian form are
$2$-incompressible \footnote{We refer to \cite{canondim} for the definition and main properties of
{\em incompressibility} and, more generally, of {\em canonical
dimension} of (projective homogeneous) varieties.}
and that essential parts of their motives are
indecomposable.
The main application (which was the initial motivation) is Corollary
\ref{J(generic)},
computing the $J$-invariant of the quadratic form attached to the
generic hermitian form and therefore the minimal value of
$J$-invariant of a quadratic form arising from a hermitian one (of a fixed dimension).
To avoid confusion, let us note that we are using the original notion of $J$-invariant, introduced by
A. Vishik in \cite[Definition 5.11]{MR2148072}, and not the modified
version of \cite{EKM}, see Section \ref{Applications to quadratic forms}
for details.

Another striking application (in the opposite direction: from quadratic
forms to hermitian ones) is the computation of the modulo $2$ reduced Chow
ring of a unitary grassmannians in terms of the Chow ring of the
orthogonal grassmannians made in Corollary \ref{isochow} (for odd-dimensional hermitian forms see
Example \ref{9.11}).
This computation is particularly interesting in the hyperbolic case because the Chow ring of orthogonal grassmannians of hyperbolic forms
is very well studied and described, see e.g. \cite{Kresch} and \cite{Vishik-u-invariant}.

We start in Section \ref{An elementary computation} with a completely
elementary computation of the subring of the invariant elements under a
permutation involution on a polynomial ring.
It becomes clear already at this point that one can make the life much more agreeable
working modulo the norms.
This principle will be constantly present in the sequel.

The reason of making the above computation becomes clear in Section \ref{Some equivariant Chow groups of the point}
where we are interested in the $T$-equivariant Chow ring of the point $\Spec F$ for
certain quasi-split torus $T$: this Chow ring is identified with the
subring of Section \ref{An elementary computation}.
Our base field $F$ appearing for the first time (after Introduction) at this point,
is (and remains until the end of the paper) of arbitrary characteristic.

In Section \ref{Generic varieties of complete flags}, the main objects of our
study begin to appear.
The torus $T$ (or $\Gm\times T$ -- depending on parity of $n$)
considered in Section \ref{Some equivariant Chow groups of the point},
turns out to be a maximal torus in a Borel subgroup $B$ of a non-split
quasi-split adjoint absolutely simple affine algebraic group $G$ of
type $\cA_n$.
Consequently,
as already noticed in \cite{MR2258262},
the $T$-equivariant Chow group $\CH_T(\Spec F)$ appears in
a computation of the reduced Chow group of the complete flag variety $E/B$
given by a {\em generic} (in the sense of \cite{MR1999383}) $G$-torsor $E$ (where $B\subset G$ is a Borel subgroup).
Having shown that the ring $\CH_T(\Spec F)$ is generated modulo the norms
by elements of codimension $\leq2$, we get the same statement for the
reduced Chow group $\BCH(E/B)$ (Proposition \ref{E/B}).
Killing in a generic way a (certain) Tits algebra, we come to the complete flag variety of a
(generic)
hermitian form while keeping the above property (Corollary \ref{corE/B}).

Next we would like to get from the complete flag variety to the maximal
grassmannian.
Clearly, this can be achieved by projecting complete flags to their
maximal component.
But unlikely the case of the orthogonal group, such a projection here is not
a chain of projective bundles.
It is a chain of quadratic Weil transfers of projective bundles.
So, we make some general study of quadratic Weil transfers of projective bundles
in Section \ref{Weil transfer of projective bundles}.\footnote{This is
already needed in the previous section at the moment we are generically
killing the Tits algebra, because this is a central simple algebra over an
extension of the base field.}
Once again, working modulo norms makes it possible to get a very simple
(and very similar to the classical case of a projective bundle)
description of the Chow ring.

We apply this description in Section \ref{Generic maximal grassmannians}
getting a description of the reduced Chow group of a generic maximal
unitary grassmannian.
We still get the generation by codimension $\leq2$, but at the same time
the components of codimension $1$ and $2$ vanish, showing that the
reduced Chow group modulo norms is trivial in positive codimensions (Proposition \ref{1.9}).
This description actually shows that such a grassmannian is
$2$-incompressible.
More than this, an essential part of its Chow motive (with coefficient in $\F_2$)
is indecomposable.
But the last two statements belong already to Section \ref{Generic
grassmannians} and are valid for all (generic) grassmannians, not only for
the maximal one.
Before we come to them,
we need a definition of the essential part and some general observations on isotropic
grassmannians,  made in Section \ref{Isotropic grassmannians}.

The notion of the essential motive of a unitary grassmannian,
introduced in Definition \ref{def-essential}, gives one more justification
for working "modulo norms": the reduced Chow group of the essential motive is
identified with the reduced Chow group of the variety modulo the norms.

The proof of Theorem \ref{main}, the main result of the paper, closely
follows \cite{gog} (see also \cite{sgog}), where a similar result is proved for the orthogonal
grassmannians.
The difference is in the case of the maximal grassmannian (the induction
base of the proof), which constitutes the main part of the present paper
while in \cite{gog} (and in \cite{sgog}) it was served by a reference.

The final Section \ref{Applications to quadratic forms} establishes and
exploits the connection with the quadratic forms.
It starts by (I suppose) well-known Lemma \ref{i2i} and classically known Corollary
\ref{h i q}.
Corollary \ref{X i Y} is just a translation of Lemma \ref{i2i} to the
language of upper motives.
The first interesting result is Corollary \ref{J(generic)} (discussed
above already)
computing the
minimal possible value of the $J$-invariant of a quadratic form
split by a given separable quadratic field extension.
This result was not accessible by methods of quadratic forms themselves,
but transferring the problem to the world of hermitian forms, we come to a
problem about a generic hermitian form (without any additional condition)
which we can solve because such a hermitian form can be obtained
from a generic torsor with a help of generic killing of
a Tits algebra.\footnote{Actually, we use the name {\em generic hermitian form} in a more concrete and narrow sense
(defined before Corollary \ref{corE/B})
in the main body of the article.
At this (non-formal) point, speaking about ``generic'' we mean a versality property in the spirit of
\cite[Definition 5.1(2)]{MR1999384}.}

The motivic indecomposability which we have in the generic case by Theorem
\ref{main} gives an isomorphism of motives of Proposition \ref{dvaMr} (in generic and therefore in general case)
which implies an isomorphism of Chow groups of Corollary \ref{isochow}.
This isomorphism of Chow groups is interesting even in the quasi-split case:
for instance, Examples \ref{primerchik} and \ref{9.11} provide a description by generators and relations for the Chow group of a
quasi-split maximal unitary grassmannian (in terms of such description known in the orthogonal case, \cite[\S86]{EKM}).

\bigskip
\noindent
{\sc Acknowledgements.}
I am grateful to Vladimir Chernousov, Alexander Merkurjev, Burt Totaro, Alexander Vishik, and Maksim
Zhykhovich for useful discussions and advices.
Special thanks go to the referee for careful reading and to Alexander Merkurjev for Remark \ref{asmrem} and for
noticing that the computations of Section \ref{Some equivariant Chow groups of the
point} are valid for arbitrary \'etale algebras (or arbitrary quasi-split tori).

\section
{Invariants under a permutation involution}
\label{An elementary computation}

Let $R$ be a commutative associative unital ring.
For any integer $r\geq0$, let us consider
the polynomial
$R$-algebra
in $2r$ variables
$S:=R[a_1,b_1,\dots,a_r,b_r]$.
Let $\sigma$ be the involution (i.e., a self-inverse automorphism)
of the $R$-algebra $S$
interchanging for
every $i=1,\dots,r$ the variables $a_i$ and $b_i$.
We write $S^\sigma\subset S$ for the
subalgebra
of $\sigma$-invariant elements.

The subset $(1+\sigma)S:=\{s+\sigma(s)\;|\;\;s\in S\}\subset S^\sigma$, the image of the
norm homomorphism
$N:S\to S$, $s\mapsto s+\sigma(s)$ (this is a homomorphism of $R$-modules), is contained in $S^\sigma$ and
is an ideal of
$S^\sigma$.

\begin{lemma}
\label{S}
The quotient $S^\sigma/(1+\sigma)S$ is generated as $R$-algebra by the classes of
the products $a_ib_i$, $i=1,\dots,r$.
\end{lemma}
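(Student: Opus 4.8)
The plan is to argue directly with the monomial $R$-basis of $S$. Recall that $S=R[a_1,b_1,\dots,a_r,b_r]$ is a free $R$-module on the set $\mathcal M$ of monomials $a_1^{m_1}b_1^{n_1}\cdots a_r^{m_r}b_r^{n_r}$ with $(m_i,n_i)\in\Z_{\geq0}^2$, and that $\sigma$ acts on this basis by the involution sending such a monomial to $a_1^{n_1}b_1^{m_1}\cdots a_r^{n_r}b_r^{m_r}$. The $\sigma$-fixed elements of $\mathcal M$ are precisely the monomials with $m_i=n_i$ for every $i$, that is, the monomials $(a_1b_1)^{m_1}\cdots(a_rb_r)^{m_r}$ in the products $a_ib_i$; every other monomial lies in a two-element $\sigma$-orbit $\{u,\sigma(u)\}$.

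Next I would take an arbitrary $f\in S^\sigma$, write $f=\sum_{u\in\mathcal M}c_u u$ with $c_u\in R$, and use $\sigma(f)=f$ to get $c_{\sigma(u)}=c_u$ for all $u$. Grouping the terms of $f$ according to the $\sigma$-orbit decomposition of $\mathcal M$, the contribution of a two-element orbit $\{u,\sigma(u)\}$ is $c_u u+c_{\sigma(u)}\sigma(u)=c_u\bigl(u+\sigma(u)\bigr)=N(c_u u)\in(1+\sigma)S$, while the contribution of the $\sigma$-fixed monomials is an $R$-linear combination of the powers $\prod_i(a_ib_i)^{m_i}$. Hence $f$ is congruent modulo $(1+\sigma)S$ to an element of the $R$-subalgebra generated by $a_1b_1,\dots,a_rb_r$, which proves the lemma. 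Equivalently, one shows $S^\sigma=R[a_1b_1,\dots,a_rb_r]+(1+\sigma)S$, the inclusion $\supseteq$ being obvious since each $a_ib_i$ is $\sigma$-fixed and $(1+\sigma)S\subset S^\sigma$.

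This argument is entirely routine; the only point worth flagging is that for a $\sigma$-fixed monomial $u$ one has $N(u)=2u$, so $u$ itself need not lie in $(1+\sigma)S$ unless $2$ is invertible in $R$ — this is exactly why the statement is about generation and not triviality of the quotient. The same bookkeeping in fact shows that $S^\sigma$ is $R$-free on the $\sigma$-fixed monomials together with the elements $u+\sigma(u)$ (one per two-element orbit), so that $S^\sigma/(1+\sigma)S$ is a free $R/2R$-module on the classes of the monomials in the $a_ib_i$; but this refinement is not needed for the lemma as stated.
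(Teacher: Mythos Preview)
Your proof is correct and rests on the same key observation as the paper's: a $\sigma$-invariant element, expanded in a monomial basis that $\sigma$ permutes, has equal coefficients along each orbit, so the two-element orbits contribute norms and only the fixed monomials $(a_1b_1)^{m_1}\cdots(a_rb_r)^{m_r}$ survive modulo $(1+\sigma)S$. The paper packages this inductively on $r$, working over $S'=R[a_1,b_1,\dots,a_{r-1},b_{r-1}]$ with the $S'$-basis $\{a_r^ib_r^j\}$ and peeling off one pair of variables at a time, whereas you do the whole orbit decomposition in one shot over $R$; your version is slightly more direct and also yields the extra structural remark that the quotient is free over $R/2R$, but the underlying argument is the same.
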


\begin{proof}
Induction on $r$.
The statement is trivial for $r=0$.

To pass from $r-1$ to $r\geq1$, let $S'$ be the $R$-subalgebra
$R[a_1,b_1,\dots,a_{r-1},b_{r-1}]$ of $S$.
The $S'$-module $S$ is free with the basis $\{a_r^ib_r^j\}_{i,j\geq0}$ so
that
an element $\Sum_{i,j\geq0}\alpha_{ij}a_r^ib_r^j\in S$ with $\alpha_{ij}\in S'$
is $\sigma$-invariant if and only
if $\sigma(\alpha_{ij})=\alpha_{ji}$ for any $i,j$.
It follows that the ${S'}^\sigma/(1+\sigma)S'$-algebra $S^\sigma/(1+\sigma)S$ is generated
by the class of $a_rb_r$.
\end{proof}

\begin{rem}
In general, the $R$-algebra $S^\sigma$ itself
is not generated by polynomials of degree
$\leq2$.
Taking for instance $R=\Z$ and $r=3$, we have a $\sigma$-invariant integral polynomial $P:=a_1a_2a_3+b_1b_2b_3$
which cannot be written as a polynomial in the $\sigma$-invariant polynomials of degree $\leq2$ in
$a_1,b_1,a_2,b_2,a_3,b_3$.
Indeed,
let us refer as {\em elementary polynomial} to any $\sigma$-invariant monomial (with coefficient $1$)
and to any polynomial of the form $M+\sigma(M)$, where $M$ is a
non-$\sigma$-invariant monomial (with coefficient $1$).
Any degree $3$ product of elementary
polynomials of degree $1$ or $2$ either has no
monomial with repeated indices of variables (only has monomials like
$a_1a_2b_3$, but no monomials like $a_1b_1a_3$ or $a_1^2a_3$) or consists
only of monomials with repeated indices.
If $P$ is a linear combination of such products with coefficients $\pm1$
(which is the case if $P$ is a polynomial in the $\sigma$-invariant polynomials of degree $\leq2$),
then the part of the combination given by the products of the second type
is $0$.
Each product in the remaining part of the decomposition, as any product of the first type, has the following
property: it is a sum of {\em even} number of degree $3$
elementary polynomials.
In particular, $P$ is a linear combination with coefficients $\pm1$ of an even number of
elementary polynomials.
Any cancelation keeps the parity of this number.
However $P$ is just one elementary polynomial.

On the other hand, $S^\sigma$ {\em is} generated by its degree $1$
and $2$ components as far as $2$ is invertible in $R$:
this can be shown by induction using the identity
$$
2(fgh+f'g'h')=
(fg+f'g')(h+h')-(fh'+f'h)(g+g')+(gh+g'h')(f+f').
$$
\end{rem}

\section
{Some equivariant Chow groups of the point}
\label{Some equivariant Chow groups of the point}

Let $F$ be a field.
We recall the computation of the $\Gm$-equivariant Chow ring
$$
\CH_{\Gm}(\Spec F),
$$
c.f. \cite[Lemma 4]{MR1487222}.
For generalities on equivariant Chow groups and rings, we refer to \cite{MR1614555}
and \cite{MR1953530}.

The $F$-torus $\Gm$ acts by multiplication on the affine line $\A^1=\A^1_F$ over $F$.
For any integer $l\geq1$, we consider the diagonal action of
$\Gm$ on $\A^l$.
By the homotopy invariance of the equivariant Chow groups,
the pull-back ring homomorphism $\CH_{\Gm}(\Spec F)\to\CH_{\Gm}(\A^l)$ is an
isomorphism.
By the localization sequence,
the pull-back ring homomorphism $\CH_{\Gm}(\A^l)\to\CH_{\Gm}(U_l)$ to the
$\Gm$-invariant open subset
$U_l:=\A^l\setminus\{0\}$ is bijective in codimensions $<l$
(because the complement of $U_l$ in $\A^l$ is of codimension $l$).
It follows that the ring homomorphism $\CH_{\Gm}(\Spec F)\to\CH_{\Gm}(U_l)$
is bijective in codimensions $<l$.
Since $\Gm$ acts freely on $U_l$ and $U_l/\Gm=\PS^{l-1}$, the
ring $\CH_{\Gm}(U_l)$ is identified with the usual Chow ring $\CH(\PS^{l-1})$.
It follows that the ring $\CH_{\Gm}(\Spec F)$ is freely generated by the
element
$h\in\CH^1_{\Gm}(\Spec F)$ corresponding to the class
of a rational point in $\CH^1(\PS^1)$ under the isomorphism
$\CH^1_{\Gm}(\Spec F)\to\CH^1(\PS^1)$.

Now we fix an \'etale $F$-algebra $L$
(see \cite[\S18A]{MR1632779} for generalities on \'etale algebras)
and
consider the Weil transfer $\WR_{L/F}\Gm$ (see \cite[20.5]{MR1632779} where it is called {\em corestriction})
with respect to $L/F$ of the $L$-torus $\Gm$.
The $F$-torus $\WR_{L/F}\Gm$ is quasi-split, and any quasi-split $F$-torus is
isomorphic to $\WR_{L/F}\Gm$ for an appropriate choice of $L$.
We would like to describe the graded (by codimension) ring $\CH_{\WR_{L/F}\Gm}(\Spec F)$.

We fix a separable closure $\Fsep$ of $F$, write
$\Gamma$ for the Galois group of $\Fsep/F$, and $X$ for
the $\Gamma$-set of the $F$-algebra homomorphisms $L\to\Fsep$.
We consider the following graded rings:
the polynomial ring $\Z[X]$ in variables indexed by the
elements of $X$ and its subring $\Z[X]^{\Gamma}$ of $\Gamma$-invariant
elements.


Let us do a computation for $\CH_{\WR_{L/F}\Gm}(\Spec F)$ similar to the above computation
of $\CH_{\Gm}(\Spec F)$.
The torus $\WR_{L/F}\Gm$ acts by multiplication on the affine $F$-space $\WR_{L/F}\A^1$
(of dimension $\#X=\dim_FL$).
For any integer $l\geq1$, we consider the diagonal action of
$\WR_{L/F}\Gm$ on the affine $F$-space $\WR_{L/F}(\A^l)=(\WR_{L/F}\A^1)^l$.
The action on the open $\WR_{L/F}\Gm$-invariant subset $U_l:=\WR_{L/F}(\A^l\setminus\{0\})$
is free, and $U_l/\WR_{L/F}\Gm=\WR_{L/F}\PS^{l-1}$.
We get a ring homomorphism $\CH_{\WR_{L/F}\Gm}(\Spec F)\to\CH(\WR_{L/F}\PS^{l-1})$
which is bijective in codimensions $<l$ (because the complement of $U_l$ in $\WR_{L/F}\A^l$ is of
codimension $l$).

Since the integral Chow motive
(for generalities on motives wee \cite[Chapter XII]{EKM})
of the variety $\WR_{L/F}\PS^{l-1}$ is a sum of
shifts of the motives of $\Spec K$ for certain subfields $K\subset\Fsep$ finite over $F$
(see \cite{MR1809664} or
\cite{outer}), the ring homomorphism $\CH(\WR_{L/F}\PS^{l-1})\to\CH(\WR_{L/F}\PS^{l-1})_{\Fsep}$
identifies the ring $\CH(\WR_{L/F}\PS^{l-1})$ with the subring of $\Gamma$-invariant elements in
$\CH(\WR_{L/F}\PS^{l-1})_{\Fsep}$.
It remains to identify the $\Fsep$-variety $(\WR_{L/F}\PS^{l-1})_{\Fsep}$ with the
product of copies of
$\PS^{l-1}$ indexed by the elements of the set $X$ and therefore to identify
$\CH(\WR_{L/F}\PS^{l-1})_{\Fsep}$ with the tensor product of the indexed by the elements of $X$ copies of
$\CH(\PS^{l-1})$.
Therefore we obtain an isomorphism of graded rings
\begin{equation}
\label{asm}
\CH_{\WR_{L/F}\Gm}(\Spec F)=\Z[X]^{\Gamma}.
\end{equation}

\begin{rem}
Any element $L\to\Fsep$ of $X$ gives a homomorphism of $\Fsep$-algebras
$L_{\Fsep}\to\Fsep$ and therefore a character of the $\Fsep$-torus
$(\WR_{L/F}\Gm)_{\Fsep}$.
The obtained map $X\to {(\WR_{L/F}\Gm)_{\Fsep}}^*$ (to the group of
characters)
induces an isomorphism of graded $\Gamma$-rings $\Z[X]\to
\cS\big({(\WR_{L/F}\Gm)_{\Fsep}}^*\big)$ to the symmetric ring
of the abelian group ${(\WR_{L/F}\Gm)_{\Fsep}}^*$.
Therefore the isomorphism \ref{asm} gives rise to an
(actually independent of the choice of $L$)
isomorphism
$$
\CH_T(\Spec F)=\cS({T_{\Fsep}}^*)^{\Gamma}
$$
(for any quasi-split $F$-torus $T$).
\end{rem}

Let now $K$ be a separable quadratic field extension of $F$.
Applying the isomorphism \ref{asm} to the \'etale $F$-algebra $L=K^{\times r}$,
since $\WR_{L/F}\Gm=(\WR_{K/F}\Gm)^{\times r}$,
we get

\begin{lemma}
\label{ourT}
Let $T:=(\WR_{K/F}\Gm)^{\times r}$ for some $r\geq1$.
There is an isomorphism
of the graded ring $\CH_T(\Spec F)$ with the subring of $\sigma$-invariant
elements of the ring
$$
\Z[a_1,b_1,\dots,a_r,b_r]
$$
(where $\sigma$ is the
ring involution exchanging $a_i$ and $b_i$ for each $i=1,\dots,r$)
such that the image of the norm map $\CH_T(\Spec K)\to\CH_T(\Spec F)$
corresponds to the image of the map
$1+\sigma:\Z[a_1,b_1,\dots,a_r,b_r]\to\Z[a_1,b_1,\dots,a_r,b_r]^\sigma$.
\qed
\end{lemma}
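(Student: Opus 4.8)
The plan is to obtain Lemma \ref{ourT} as a special case of the isomorphism \ref{asm}. First I would take the \'etale $F$-algebra $L:=K^{\times r}$; since Weil transfer turns a product of algebras into a product of tori, $\WR_{L/F}\Gm=(\WR_{K/F}\Gm)^{\times r}=T$, so \ref{asm} gives a graded ring isomorphism $\CH_T(\Spec F)=\Z[X]^{\Gamma}$ with $X$ the $\Gamma$-set of $F$-algebra homomorphisms $L\to\Fsep$ and $\Gamma=\Gal(\Fsep/F)$.

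Next I would describe $X$ together with its $\Gamma$-action concretely. An $F$-algebra homomorphism $K^{\times r}\to\Fsep$ factors through exactly one of the $r$ projections onto $K$ and is then one of the two $F$-embeddings $K\hookrightarrow\Fsep$. Hence $\#X=2r$; writing $\{a_i,b_i\}$ for the pair coming from the $i$-th factor, the $\Gamma$-action on $X$ factors through $\Gal(K/F)=\Z/2$, whose nontrivial element simultaneously interchanges $a_i$ and $b_i$ for all $i$. Consequently the induced action on the polynomial ring $\Z[X]=\Z[a_1,b_1,\dots,a_r,b_r]$ is precisely the involution $\sigma$ of Lemma \ref{S} (taken with $R=\Z$), and $\Z[X]^{\Gamma}$ is its subring $S^{\sigma}$ of $\sigma$-invariants. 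This establishes the ring isomorphism asserted in the lemma.

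For the statement about the norm map I would argue as follows. Applying the same recipe over $K$ --- using that $\Hom_K(L\otimes_F K,\Fsep)=\Hom_F(L,\Fsep)=X$ as a set, now with the trivial action of $\Gamma_K:=\Gal(\Fsep/K)$ (trivial because the $\Gamma$-action on $X$ factors through $\Gal(K/F)$) --- one gets $\CH_T(\Spec K)=\Z[X]^{\Gamma_K}=\Z[X]=S$. The norm map is the push-forward along $\Spec K\to\Spec F$, and I would identify it with the transfer $\Z[X]^{\Gamma_K}\to\Z[X]^{\Gamma}$ of $\Gamma$-modules, $s\mapsto\sum_{g\in\Gamma/\Gamma_K}g(s)=s+\sigma(s)$, whose image is exactly $(1+\sigma)S$. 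To make this precise I would go back, as in the derivation of \ref{asm}, to the free approximations $U_l/\WR_{L/F}\Gm=\WR_{L/F}\PS^{l-1}$: these are products of Weil transfers of projective spaces, whose motives decompose into Tate twists of motives of spectra of finite separable subfields of $\Fsep$, so that their Chow groups are computed by $\Gamma$-invariants; after base change to $\Fsep$ the claim reduces to the elementary fact that the push-forward along $\Spec(K\otimes_F\Fsep)=\coprod_{\Gamma/\Gamma_K}\Spec\Fsep\to\Spec\Fsep$ is the sum of the structure maps of its components, which on Chow groups is the operator $\sum_{g\in\Gamma/\Gamma_K}g$.

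The only step that is not pure bookkeeping with $\Gamma$-sets is this last compatibility of the push-forward with the description of Chow groups as Galois invariants; it rests on the motivic splitting of Weil transfers of projective spaces already used to prove \ref{asm}, so I expect it to be the main (and essentially only) obstacle. All the remaining verifications --- the identification of $X$, of the $\sigma$-action, and of $\CH_T(\Spec K)$ with $S$ --- are routine.
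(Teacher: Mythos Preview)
Your approach is exactly the paper's: apply the isomorphism \ref{asm} with $L=K^{\times r}$, use $\WR_{L/F}\Gm=(\WR_{K/F}\Gm)^{\times r}$, and identify the $\Gamma$-set $X$ with $\{a_1,b_1,\dots,a_r,b_r\}$ on which $\Gamma$ acts through $\Gal(K/F)$ by the involution $\sigma$. You supply more detail than the paper on the norm-map identification (the paper leaves that implicit in its one-line deduction), and your justification via the transfer $s\mapsto s+\sigma(s)$ is correct.
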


Note that the image of the norm map $\CH_T(\Spec K)\to\CH_T(\Spec F)$
is an ideal of the destination ring.
Applying Lemma \ref{S}, we get

\begin{cor}
\label{cor1}
The ring $\CH_T(\Spec F)$  modulo the ideal of norms from
$\Spec K$ is generated
by elements of codimension $2$.
\qed
\end{cor}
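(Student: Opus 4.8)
The plan is to simply feed Lemma~\ref{ourT} into Lemma~\ref{S}. First I would invoke Lemma~\ref{ourT} to identify the graded ring $\CH_T(\Spec F)$ with the subring $S^\sigma$ of $\sigma$-invariant elements of $S:=\Z[a_1,b_1,\dots,a_r,b_r]$, in such a way that the image of the norm map $\CH_T(\Spec K)\to\CH_T(\Spec F)$ corresponds to the subset $(1+\sigma)S$. Since this image is an ideal of the destination ring (as recorded just before the statement), the quotient makes sense and the identification descends to a graded ring isomorphism
$$
\CH_T(\Spec F)/\Im\bigl(\CH_T(\Spec K)\to\CH_T(\Spec F)\bigr)\;\cong\;S^\sigma/(1+\sigma)S.
$$

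Next I would apply Lemma~\ref{S} with $R=\Z$, which says precisely that the right-hand side is generated as a $\Z$-algebra by the classes of the products $a_ib_i$, $i=1,\dots,r$. The only thing left is to match the grading: under the isomorphism \ref{asm} (and hence that of Lemma~\ref{ourT}) each variable $a_i$, $b_i$ sits in the degree~$1$, i.e.\ codimension~$1$, component, since the generators of $\Z[X]$ arise from the codimension~$1$ classes $h$ on the projective-space factors of $\WR_{L/F}\PS^{l-1}$. Consequently each product $a_ib_i$ lies in codimension~$2$, and the quotient ring is generated as a $\Z$-algebra by these codimension~$2$ elements, which is exactly the assertion of the corollary.

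The argument is pure bookkeeping once Lemmas~\ref{S} and~\ref{ourT} are in hand; there is no genuine obstacle. The one point deserving care is the grading convention: one must be sure that the ``variables'' produced by Lemma~\ref{ourT} really live in codimension~$1$, so that their pairwise products land in codimension~$2$ and ``generated by elements of codimension~$2$'' is the correct reading of Lemma~\ref{S}'s conclusion about algebra generators.
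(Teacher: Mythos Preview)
Your proposal is correct and matches the paper's intended argument exactly: the corollary is stated with a \qed because it is meant to follow immediately by combining Lemma~\ref{ourT} with Lemma~\ref{S} (with $R=\Z$), precisely as you describe. Your extra remark about the grading---that the variables $a_i,b_i$ sit in codimension~$1$ so that $a_ib_i$ lands in codimension~$2$---is the only thing one might want to make explicit, and you have done so correctly.
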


Similarly, applying isomorphism (\ref{asm}) to the \'etale $F$-algebra
$L=F\times K^{\times r}$ and then Lemma \ref{S} with $R$ being a ring integral
polynomials in one variable, we get

\begin{cor}
\label{cor2}
The ring $\CH_{\Gm\times T}(\Spec F)$
(for $T$ as in Lemma \ref{ourT})
modulo the ideal of norms from
$\Spec K$ is generated
by elements of codimension $1$ and $2$.
\qed
\end{cor}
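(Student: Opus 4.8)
The plan is to repeat the derivation of Corollary \ref{cor1} almost verbatim, only starting from a slightly larger \'etale $F$-algebra so as to produce an extra $\Gm$-factor contributing a non-norm generator in codimension $1$. Concretely, I would apply the isomorphism (\ref{asm}) to the \'etale $F$-algebra $L=F\times K^{\times r}$. Weil transfer is multiplicative with respect to direct product decompositions of the base algebra, and $\WR_{F/F}\Gm=\Gm$, so $\WR_{L/F}\Gm=\Gm\times(\WR_{K/F}\Gm)^{\times r}=\Gm\times T$; thus (\ref{asm}) identifies the graded ring $\CH_{\Gm\times T}(\Spec F)$ with $\Z[X]^\Gamma$, where $X$ is the $\Gamma$-set of $F$-algebra homomorphisms $L\to\Fsep$.

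Next I would unravel this $\Gamma$-set. The factor $F$ of $L$ contributes one element of $X$, fixed by $\Gamma$; let $h$ be the corresponding variable, which (as for $\Gm$ alone, cf.\ the computation of $\CH_{\Gm}(\Spec F)$ recalled at the start of this section) lies in codimension $1$ and is just the pull-back of the generator of $\CH_{\Gm}(\Spec F)$. Each of the $r$ copies of $K$ contributes a free $\Gal(K/F)$-orbit $\{a_i,b_i\}$, again in codimension $1$. Writing $\sigma$ for the nontrivial element of $\Gal(K/F)$, which fixes $h$ and interchanges $a_i\leftrightarrow b_i$, we obtain $\Z[X]=S:=R[a_1,b_1,\dots,a_r,b_r]$ with $R:=\Z[h]$, and $\Z[X]^\Gamma=S^\sigma$.

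As in the proof of Lemma \ref{ourT}, base change to $K$ makes $\Gamma_K=\Gal(\Fsep/K)$ act trivially on $X$, so $\CH_{\Gm\times T}(\Spec K)$ is identified with $S$ itself, and under these identifications the corestriction (norm) map $\CH_{\Gm\times T}(\Spec K)\to\CH_{\Gm\times T}(\Spec F)$ becomes the $R$-linear homomorphism $s\mapsto s+\sigma(s)$, whose image is the ideal $(1+\sigma)S$ of $S^\sigma$. Now Lemma \ref{S}, applied with this $R$ (the ring of integral polynomials in the one variable $h$), says that $S^\sigma/(1+\sigma)S$ is generated as an $R$-algebra by the classes of the products $a_ib_i$, all of codimension $2$. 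Since $R=\Z[h]$ is generated over $\Z$ by the codimension $1$ element $h$, it follows that $\CH_{\Gm\times T}(\Spec F)$ modulo the ideal of norms from $\Spec K$ is generated by elements of codimension $1$ and $2$, as claimed.

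There is no real obstacle here: all the genuine content sits in Lemma \ref{S}, and the only steps needing care are (i) confirming that $\WR_{L/F}\Gm$ splits off the $\Gm$-factor for $L=F\times K^{\times r}$, and (ii) checking, exactly as in Lemma \ref{ourT}, that the corestriction map is induced by $1+\sigma$ and that the grading is respected, so that the new generator $h$ genuinely contributes in codimension $1$ and is not absorbed into the norm ideal.
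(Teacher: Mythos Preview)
Your proposal is correct and follows exactly the paper's own argument: apply the isomorphism (\ref{asm}) to the \'etale $F$-algebra $L=F\times K^{\times r}$ and then invoke Lemma \ref{S} with $R=\Z[h]$, the polynomial ring in one variable. You have simply spelled out in detail what the paper compresses into one sentence.
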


\section
{Generic varieties of complete flags}
\label{Generic varieties of complete flags}

Let $G$ be a quasi-split
semisimple affine algebraic group over a
field $F$.
Let $K/F$ a minimal field extension such that $G_K$ is split.
(This is a finite galois field extension uniquely determined up to an isomorphism by
$G$.
Our main case of interest will be the case where $G$ is non-split adjoint and simple of
type $\cA_{n-1}$, $n\geq2$; in this case the extension $K/F$ is
quadratic.)

Let $P\subset G$ be a parabolic subgroup.

\begin{lemma}
\label{injects}
For any field extension $L/F$, the change of field homomorphism
$\CH(G/P)\to\CH(G_L/P_L)$ is injective.
It is moreover bijective provided that $K\otimes_FL$ is a field.
\end{lemma}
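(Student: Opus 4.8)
The plan is to deduce both assertions from the known structure of the Chow motive of $G/P$ when $G$ is quasi-split. First I would recall that, since $G_K$ is split, the $K$-variety $(G/P)_K$ is a split projective homogeneous variety, so its integral Chow motive is a finite direct sum of Tate motives, realized over $\Fsep$ by the Schubert basis of $\CH\big((G/P)_{\Fsep}\big)$; the Galois group $\Gamma$ permutes this basis and, as $(G/P)_K$ is already split, acts on it through $\Gal(K/F)$. Galois descent for Chow motives — as in the description of the motive of $\WR_{L/F}\PS^{l-1}$ used in Section~\ref{Some equivariant Chow groups of the point}, cf.\ \cite{outer} — then gives a decomposition
\[
M(G/P)\;\simeq\;\bigoplus_i M(\Spec E_i)(d_i),
\]
the sum running over the $\Gamma$-orbits of Schubert classes, where $d_i\ge0$ is the codimension of the $i$-th orbit and $E_i$, the fixed field of the stabilizer of one of its elements, is an intermediate field $F\subseteq E_i\subseteq K$. (Equivalently, one may work with the filtration of $G/P$ by closed $F$-subvarieties coming from the relative Bruhat decomposition, whose successive complements are affine spaces over the fields $E_i$; this is the quasi-split, i.e.\ trivial anisotropic kernel, case of the motivic decomposition of isotropic projective homogeneous varieties.)

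Granting this, the first assertion would be immediate. Applying $\CH$ and using $\CH(\Spec E_i)=\Z$ gives $\CH(G/P)=\bigoplus_i\Z$, one copy sitting in codimension $d_i$; base changing the decomposition to $L$ gives $M(G_L/P_L)\simeq\bigoplus_i M\big(\Spec(E_i\otimes_FL)\big)(d_i)$, hence $\CH(G_L/P_L)=\bigoplus_i\CH\big(\Spec(E_i\otimes_FL)\big)$. As $E_i/F$ is separable, $E_i\otimes_FL$ is a finite \'etale $L$-algebra, a product of $c_i\ge1$ field components, so $\CH\big(\Spec(E_i\otimes_FL)\big)\cong\Z^{c_i}$; and on the $i$-th summand the change of field homomorphism is the flat pull-back $\CH(\Spec E_i)\to\CH\big(\Spec(E_i\otimes_FL)\big)$, which sends the fundamental class to the fundamental class, i.e.\ is the diagonal embedding $\Z\hookrightarrow\Z^{c_i}$. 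Being a direct sum of injective maps, $\CH(G/P)\to\CH(G_L/P_L)$ is injective.

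For the bijectivity clause I would note that this map is onto precisely when $c_i=1$ for every $i$, i.e.\ when each $E_i\otimes_FL$ is a field. So assume $K\otimes_FL$ is a field. Since $E_i$ is (isomorphic to) a subfield of $K$ and $L$ is flat over $F$, the ring $E_i\otimes_FL$ is (isomorphic to) an $L$-subalgebra of $K\otimes_FL$; being a finite-dimensional subalgebra of a field it is an integral domain, and an Artinian integral domain is a field. Hence $c_i=1$ for all $i$ and the map is bijective. I expect the only substantial point to be the motivic (equivalently, Bruhat-cellular) decomposition recalled at the outset; once it is available, the remainder is routine bookkeeping with Chow groups of spectra of \'etale algebras.
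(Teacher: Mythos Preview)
Your proposal is correct and follows essentially the same approach as the paper: both rest on the motivic decomposition of $G/P$ into shifts of $M(\Spec K')$ with $K'$ ranging over intermediate fields of $K/F$ (the paper cites \cite{MR2110630} for this, while you invoke \cite{outer} and the relative Bruhat decomposition, which amount to the same thing). The paper leaves the remaining bookkeeping implicit, whereas you spell out the diagonal-embedding and Artinian-domain arguments; these are exactly the details one would fill in.
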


\begin{proof}
By \cite{MR2110630}, the Chow motive (with integer coefficients) of the $F$-variety
$G/P$ decomposes in a finite direct sum of shifts of the motive of the $F$-varieties $\Spec
K'$, where $K'$ runs over intermediate fields of the extension $K/F$.
\end{proof}

\begin{cor}
\label{humphreys}
The action of $G(F)$ on $\CH(G/P)$ is trivial.
\end{cor}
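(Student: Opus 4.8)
The plan is to deduce this directly from Lemma~\ref{injects}. First I would observe that for any $g \in G(F)$, left translation by $g$ is an $F$-automorphism of the variety $G/P$, hence induces an automorphism of the graded ring $\CH(G/P)$. To show this automorphism is the identity, the strategy is to pass to a field extension over which $G$ becomes split: let $L = K$ (or more generally any splitting field of $G$), so that $G_L$ is split. By Lemma~\ref{injects}, the change-of-field homomorphism $\CH(G/P) \to \CH(G_L/P_L)$ is injective (indeed bijective when $K \otimes_F L$ is a field, but injectivity is all we need), and it is compatible with the action of $g$, viewed inside $G(L)$ acting on $(G/P)_L = G_L/P_L$. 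So it suffices to prove the statement after base change to $L$, i.e.\ to prove that $G(L)$ acts trivially on $\CH(G_L/P_L)$ when $G_L$ is split.

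Next I would invoke the standard fact for split reductive groups: if $G$ is split, then $G(L)$ is generated by the unipotent radicals of the (minimal) parabolic subgroups, equivalently $G$ is connected as an algebraic group and $G(L)$ is "connected" in the relevant sense — more precisely, any $g \in G(L)$ can be connected to the identity by a chain of $\A^1$'s (e.g.\ via root subgroups $U_\alpha \cong \Ga$). Translation by a point of $\Ga(L) = L$ gives a morphism $\A^1_L \times (G_L/P_L) \to G_L/P_L$ which, restricted to $\{0\}$ and to $\{g\}$, yields the identity and the translation-by-$g$ map respectively. By homotopy invariance of Chow groups (the projection $\A^1 \times Y \to Y$ induces an isomorphism on $\CH$), both restrictions induce the same map on $\CH(G_L/P_L)$; hence translation by any element of a root subgroup acts trivially, and therefore so does translation by any product of such elements, i.e.\ by any element of $G(L)$.

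The main obstacle — really the only point requiring care — is the input that $G(L)$ is generated by root subgroups when $G_L$ is split; this is classical (it is essentially the Bruhat decomposition together with the structure of the split maximal torus, or one can cite that a split semisimple simply connected group has this property and reduce the general semisimple case to it, noting that the isogeny does not affect $G/P$). Once that is in hand, everything else is the homotopy-invariance argument above combined with the injectivity from Lemma~\ref{injects}. I would also remark that the same argument shows more generally that $G(F)$ acts trivially on any contravariant homotopy-invariant functor applied to $G/P$, but for the corollary the Chow-group statement suffices.
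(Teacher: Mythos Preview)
Your overall strategy---reduce via Lemma~\ref{injects} to a field where $G$ is split, then use that the rational points are generated by one-parameter subgroups and invoke $\A^1$-homotopy invariance---is exactly the paper's approach. The gap is in your generation claim. It is \emph{not} true that $G(L)$ is generated by root subgroups $U_\alpha\cong\Ga$ for an arbitrary split semisimple $G$ over an arbitrary field $L$: already for $G=\PGL_2$, the subgroup generated by the unipotents is the image of $\SL_2(L)$, and the quotient $\PGL_2(L)/\Im(\SL_2(L))\cong L^\times/(L^\times)^2$ is nontrivial in general. Your proposed fix via the simply connected cover $\tilde G\to G$ does not close this gap: while $\tilde G/\tilde P\cong G/P$, the map $\tilde G(L)\to G(L)$ need not be surjective (its cokernel sits in $H^1(L,\ker(\tilde G\to G))$), so knowing that $\tilde G(L)$ acts trivially does not give the statement for all of $G(L)$.

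The paper repairs this in two moves. First, it passes not just to a splitting field but to an \emph{algebraically closed} field (still using Lemma~\ref{injects} for injectivity); there one has the stronger input that $G(F)$ is generated by subgroups isomorphic to $\Ga$ \emph{or} $\Gm$. Second, it treats the $\Gm$ case separately: the pull-back $\CH(X)\to\CH(\Gm\times X)$ is again an isomorphism (surjective by homotopy invariance plus localization, injective via a rational-point section), and the same specialization argument then applies. If you want to keep your root-subgroup formulation, you could instead pass to an algebraic closure so that $\tilde G(L)\to G(L)$ becomes surjective; but as written the step ``noting that the isogeny does not affect $G/P$'' is not enough.
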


\begin{proof}
By Lemma \ref{injects}, we may assume that $F$ is algebraically closed.
In this case, by \cite[Theorem of \S26.3 in Chapter IX]{MR0396773},
the group $G(F)$ is generated by the subgroups $\G(F)$, where $\G$ runs
over subgroups of $G$ isomorphic to $\Ga$ or $\Gm$.
To finish the proof it suffices to show that for $\G=\Ga$ as well as for $\G=\Gm$,
given an action of $\G$ on a smooth variety $X$ over a field $F$ (which needs not to be algebraically closed
anymore), the induced action of $\G(F)$ on $\CH(X)$ is trivial.

To show this, we proceed similarly to \cite[Proof of Lemma 4.3]{MR1460391}.
Note that the pull-back $\CH(X)\to\CH(\G\times X)$ with
respect to the projection $\G\times X\to X$ is an isomorphism:
for $\G=\Ga$ this is so by homotopy invariance of Chow groups \cite[Theorem
57.13]{EKM};
for $\G=\Gm$ the homotopy invariance together with the localization
\cite[Proposition 57.9]{EKM} give the surjectivity, and for injectivity
consider a section of the projection given by a rational point on $\Gm$.

It follows that the pull-back $\inc_g^*:\CH(\G\times X)\to\CH(X)$ with
respect to the section $X\to\G\times X$ given by an element $g\in\G(F)$
is the inverse of the above isomorphism and, in particular, does not depend on $g$.
On the other hand, the action of $g$ on $\CH(X)$ is the composition
$$
\begin{CD}
\CH(X)@>>>\CH(\G\times X)@>{\inc_g^*}>>\CH(X),
\end{CD}
$$
where the first map is the pull-back with respect to the action morphism $\G\times X\to
X$ of $\G$ on $X$.
Therefore the action of $g$ on $\CH(X)$ coincides with the action of $1$ which is trivial.
\end{proof}

For any $G$-torsor (i.e., a principle $G$-homogenous space) $E$ over $F$, we construct a homomorphism
$$
\CH(E/P)\to\CH(G/P)
$$
as the composition of three ones:
the change of field homomorphism
$$
\CH(E/P)\to\CH(E_{F(E)}/P_{F(E)}),
$$
the pull-back
$$
\CH(E_{F(E)}/P_{F(E)})\to\CH(G_{F(E)}/P_{F(E)})
$$
with respect to the morphism $G_{F(E)}/P_{F(E)}\to E_{F(E)}/P_{F(E)}$
induced by the $G$-equivariant morphism $G_{F(E)}\to E_{F(E)}$ taking the
identity of $G$ to the generic point of $E$, and
the inverse
$$
\CH(G_{F(E)}/P_{F(E)})\to\CH(G/P)
$$
of the change of field homomorphism
$\CH(G/P)\to\CH(G_{F(E)}/P_{F(E)})$
which is an isomorphism by Lemma \ref{injects} because $F$ is algebraically closed in $F(E)$.

More generally, the homomorphism $\CH(E/P)\to\CH(G/P)$ is defined for any
$G$-torsor $E$ over a {\em regular} field extension $\tilde{F}/F$:
since $K\otimes_F\tilde{F}$ is a field, the change of fields homomorphism
$\CH(G/P)\to\CH(G_{\tilde{F}}/P_{\tilde{F}})$ is an isomorphism.

\begin{lemma}
\label{spec}
Let $S$ be a smooth geometrically irreducible $F$-variety and let $E$ be a
$G$-torsor over $S$.
For a point $s\in S(L)$ of $S$ in a regular field extension $L/F$, let
$E_s$ be the $G$-torsor over $L$ given by the fiber of $E\to S$ over $s$.
Then the image of $\CH(E_s/P)\to\CH(G/P)$ contains the image of
$\CH(E_\theta/P)\to\CH(G/P)$, where $\theta\in S(F(S))$ is the generic
point of $S$.
\end{lemma}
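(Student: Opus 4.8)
The plan is to realize the torsor $E_s$ as a specialization of $E_\theta$ along a valuation ring, and then to invoke the specialization homomorphism on Chow groups together with the construction $\CH(E_?/P)\to\CH(G/P)$ already set up above. First I would choose a discrete valuation ring $\cO$ (or, if need be, a more general valuation ring) dominating the local ring of $S$ at the closure of the point $s$, with residue field $L$ and fraction field containing $F(S)$ — concretely, one can take a point of $S$ specializing to $s$ whose generic point maps to $\theta$, for instance by choosing a curve through $s$ not contained in a proper closed subset, or simply using that $s\in S(L)$ lifts to an $\cO$-point of $S$ for a suitable valuation ring $\cO$ of $F(S)$ with residue field $L$ (here $L/F$ regular guarantees that $F$ is algebraically closed in $L$, so that the map $\CH(G/P)\to\CH(G_L/P_L)$ is an isomorphism by Lemma \ref{injects}). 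Pulling back $E$ along $\Spec\cO\to S$ we obtain a $G$-torsor $\cE$ over $\Spec\cO$ whose generic fiber is $E_\theta$ (after base change to the fraction field) and whose special fiber is $E_s$.

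Next I would apply this to the smooth $\cO$-scheme $\cE/P$, whose generic and special fibers are $E_\theta/P$ and $E_s/P$ respectively. The specialization homomorphism in the theory of Chow groups (deformation to the special fiber, or equivalently the composite of the flat pull-back to $\cE/P$ over $\cO$ with the Gysin pull-back to the special fiber) gives a ring homomorphism $\CH(E_\theta/P)\to\CH(E_s/P)$ which is compatible with change of field to an algebraic closure, hence compatible with the projections to $\CH(G/P)$ in the obvious sense: the diagram whose two legs are the specialization map followed by $\CH(E_s/P)\to\CH(G/P)$, and the identity followed by $\CH(E_\theta/P)\to\CH(G/P)$, commutes. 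This compatibility is the technical heart of the argument: one must check that the map $\CH(E_?/P)\to\CH(G/P)$, built from change of field, a pull-back along $G_{F(E)}\to E_{F(E)}$, and the inverse isomorphism of Lemma \ref{injects}, is preserved under specialization. This follows because after base change to a separable closure all the torsors become trivial and the specialization map becomes the identity on $\CH((G/P)_{\Fsep})$, while the $\Gamma$-invariance identifications of Lemma \ref{injects} are respected by specialization.

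Granting that commutativity, the image of $\CH(E_\theta/P)\to\CH(G/P)$ factors through $\CH(E_s/P)$, so it is contained in the image of $\CH(E_s/P)\to\CH(G/P)$, which is exactly the assertion. The main obstacle I anticipate is the bookkeeping in the previous paragraph: setting up the valuation ring $\cO$ so that its special fiber is genuinely $E_s$ (one may need to pass to a regular extension of $L$ or to use that $E_s$ is determined by its class in $H^1(L,G)$, which does not change under further regular extension), and verifying that the specialization map on Chow groups is a ring homomorphism commuting with all the maps in the construction. Once those foundational points are in place — and they are standard, cf. the deformation-to-the-normal-cone discussion and the compatibility of specialization with pull-back — the conclusion is immediate. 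An alternative, avoiding explicit valuation rings, is to use that $E_s$ for $s\in S(L)$ with $L/F$ regular can be obtained from $E_\theta$ by a further specialization within $S$ itself and to argue directly on $\CH(E/P)$ over $S$ via the localization sequence, but the valuation-ring approach seems cleanest.
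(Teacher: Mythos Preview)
Your overall strategy---construct a specialization homomorphism $\CH(E_\theta/P)\to\CH(E_s/P)$ and show it fits into a commutative triangle with the two maps to $\CH(G/P)$---is exactly the paper's approach. Two points deserve comment.

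First, a minor one: you cannot in general find a single discrete valuation ring $\cO$ with fraction field $F(S)$ and residue field $L$ when the image $x\in S$ of $s$ has codimension $>1$. The paper handles this by using a regular system of parameters at $x$ to produce a finite \emph{chain} of DVRs connecting $F(S)$ to $F(x)$, composing the specialization maps along the chain, and then following with the change-of-field map $\CH(E_x/P)\to\CH(E_s/P)$. Your parenthetical ``or, if need be, a more general valuation ring'' would require specialization theory over higher-rank valuation rings, which is not standard; the chain-of-DVRs device avoids this.

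Second, and more seriously, your justification of the commutativity of the triangle does not work as written. You claim that ``after base change to a separable closure all the torsors become trivial''. But $E_\theta$ is a torsor over $F(S)$, and base-changing along $F\subset\Fsep$ gives a torsor over $\Fsep(S_{\Fsep})$, which is typically \emph{not} trivial (think of the generic torsor). Passing instead to a separable closure of $F(S)$ does trivialize $E_\theta$, but that field bears no direct relation to any field trivializing $E_s$, so there is no common ambient ring over which to compare. The $\Gamma$-invariance from Lemma~\ref{injects} does not resolve this. What is actually needed is a trivialization of the torsor over the DVR itself (or its completion), so that the specialization map on $\CH(E/P)$ is literally the specialization map on $\CH(G/P)$, which is the identity via Lemma~\ref{injects}. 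The paper achieves this as follows: reduce to a single DVR $R$ with closed point $x$; extend scalars so that the closed fiber $E_x$ is split; then invoke \cite[Proposition 8.1]{Grothendieck62} to conclude that the torsor over the \emph{completion} $\hat R$ is split. Over $\hat R$ the triangle then commutes because any two trivializations of a split torsor induce the same map on $\CH(G/P)$ by Corollary~\ref{humphreys}, and one descends back to $R$ via the obvious compatibility square. This use of Hensel-type lifting plus Corollary~\ref{humphreys} is the genuine content you are missing.
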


\begin{proof}
Let $x\in S$ be the image of the point $\Spec L$ under the morphism $s:\Spec L\to
S$.
Since $x$ is regular, there exists a system of local parameters on $S$ around
$x$.
Therefore the fields $F(S)$ and $F(x)$ are connected by a finite chain of
discrete valuation fields, where each next field is the residue field of
the previous one.
Using the specialization homomorphisms on Chow groups as in \cite[Example
20.3.1]{Fulton}, we get a homomorphism $\CH(E_\theta/P)\to\CH(E_x/P)$.
Composing it with the change of fields homomorphism
$\CH(E_x/P)\to\CH(E_s/P)$, we get a homomorphism $\CH(E_\theta/P)\to\CH(E_s/P)$
which forms a commutative triangle with the homomorphisms to $\CH(G/P)$ and the required inclusion follows.

To prove that the triangle is commutative,
it suffices to prove that the similar triangle based on the specialization homomorphism
$\CH(E_\theta/P)\to\CH(E_x/P)$ commutes in the following situation:
$R$ is a discrete valuation $F$-algebra with the residue field $F$;
$E$ is a $G$-torsor over $R$;
$x$ is the closed and $\theta$ the generic point of $\Spec R$.
Moreover, extending $F$ to $F(E_x)$, we may assume that the torsor $E_x$
is split.
In this case,
by \cite[Proposition 8.1]{Grothendieck62},
the torsor $E_{\hat{R}}$ over
the completion $\hat{R}$ of $R$ is split.
Therefore the triangle with
$\CH(E_{\hat{\theta}}/P)\to\CH(E_{\hat{x}}/P)$, where $\hat{\theta}$ and
$\hat{x}$ are the generic and closed points of $\Spec \hat{R}$,
commutes (the homomorphism $\CH(E/P)\to\CH(G/P)$ induced by any trivialization of a split torsor $E$ is the identity
by Corollary \ref{humphreys}).
Commutativity of the diagram
$$
\begin{CD}
\CH(E_\theta/P)@>>>\CH(E_x/P)\\
@VVV      @VVV\\
\CH(E_{\hat{\theta}}/P)@>>>\CH(E_{\hat{x}}/P)
\end{CD}
$$
finishes the proof.
\end{proof}

Let $B\subset G$ be a Borel subgroup and $T\subset B$ a maximal torus.
The following statement is well-known in the case of split $G$.
In our quasi-split case, the proof is almost the same:

\begin{lemma}
\label{B/T}
For any $G$-torsor $E$,
the pull-back $\CH(E/B)\to\CH(E/T)$ with respect
to the projection $E/T\to E/B$ is an isomorphism.
In particular, the image of $\CH(E/B)\to\CH(G/B)$ is identified with the image
of $\CH(E/T)\to\CH(G/T)$.
\end{lemma}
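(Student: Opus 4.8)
The plan is to recognize the projection $p\colon E/T\to E/B$ as a tower of torsors under vector bundles and then to conclude by homotopy invariance of Chow groups, just as one does when $G$ is split. The only point at which the quasi-split setting intervenes is that the whole construction must be carried out over the ground field $F$; the essential input for this is that the unipotent radical $R_u(B)$ is a split unipotent $F$-group, which holds because $B$ is a parabolic subgroup of the reductive group $G$.

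I would begin from the Levi decomposition $B=U\rtimes T$ over $F$, with $U:=R_u(B)$, and choose a chain $U=U^{(0)}\supset U^{(1)}\supset\dots\supset U^{(m)}=\{1\}$ of closed $F$-subgroups, each normalized by $T$ (for instance the descending central series, or the filtration of $U$ by root heights built from the $F$-rational root subgroups), such that each successive quotient $U^{(i-1)}/U^{(i)}$ is a commutative vector $F$-group on which $T$ acts linearly. In the split case each such quotient is $\Ga$; in the quasi-split case a quotient attached to a Galois orbit of roots is of the form $\WR_{L_i/F}\Ga\cong\Ga^{[L_i:F]}$ for a finite separable extension $L_i/F$, the $T$-action being linear (but not decomposable over $F$ into copies of $\Ga$, which is why one works with these coarser graded pieces). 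Then $B/T\cong U$ is an affine space over $F$. Putting $B_i:=U^{(i)}\rtimes T$, one gets a chain $B=B_0\supset B_1\supset\dots\supset B_m=T$ with $B_{i-1}/B_i\cong U^{(i-1)}/U^{(i)}$, and the left-translation action of $B_{i-1}$ on this quotient is by affine transformations, the translations coming from $U^{(i-1)}$ and the linear part being the character by which $T$ acts.

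Next, given a $G$-torsor $E$, I would form the quotients $E/B_i$ — each of which is a variety over $F$, being affine over the projective variety $E/B$ — and observe that the projection $\pi_i\colon E/B_i\to E/B_{i-1}$ is a torsor under the vector bundle $E\times^{B_{i-1}}\mathrm{Lie}(U^{(i-1)}/U^{(i)})$ over $E/B_{i-1}$ (here one uses that the $B_{i-1}$-torsor $E\to E/B_{i-1}$ is \'etale-locally trivial). A torsor under a vector bundle is Zariski-locally trivial, so by homotopy invariance of Chow groups \cite[Theorem~57.13]{EKM}, applied on the members of a trivializing open cover together with the localization sequence \cite[Proposition~57.9]{EKM}, each pull-back $\pi_i^*\colon\CH(E/B_{i-1})\to\CH(E/B_i)$ is an isomorphism. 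Composing over $i=1,\dots,m$ shows that $p^*\colon\CH(E/B)\to\CH(E/T)$ is an isomorphism; the same argument applied to the trivial torsor $E=G$ shows that the pull-back $q^*\colon\CH(G/B)\to\CH(G/T)$ along $G/T\to G/B$ is an isomorphism as well.

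Finally, for the ``in particular'' I would use the square
$$
\begin{CD}
\CH(E/B) @>{p^*}>> \CH(E/T)\\
@VVV @VVV\\
\CH(G/B) @>{q^*}>> \CH(G/T)
\end{CD}
$$
whose vertical maps are the characteristic homomorphisms $\CH(E/{-})\to\CH(G/{-})$ constructed before Lemma \ref{spec}; it commutes by the naturality of the change-of-field homomorphisms and of the pull-backs along the projections to $T$- and $B$-quotients that enter the definition of these characteristic maps. Since $p^*$ is surjective, the image of $\CH(E/T)\to\CH(G/T)$ equals $q^*$ applied to the image of $\CH(E/B)\to\CH(G/B)$; and since $q^*$ is injective, it restricts to an isomorphism between these two images, which is exactly the claimed identification. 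The part of the argument that genuinely departs from the split case is the construction of the $B$-stable chain $U=U^{(0)}\supset\dots\supset U^{(m)}=\{1\}$ over $F$ with the stated properties: this is where quasi-splitness of $G$ is used, and it is the only real obstacle, the rest being formal.
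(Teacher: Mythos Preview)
Your proof is correct and follows the same strategy as the paper: filter the unipotent radical $U$ of $B$ by $B$-stable subgroups, recognise each step $E/B_i\to E/B_{i-1}$ as an affine-space fibration, and conclude by homotopy invariance of Chow groups; the ``in particular'' is then the obvious commutative square, which you spell out and the paper leaves implicit.

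Your version is in fact more precise than the paper's on the one substantive point. The paper invokes Borel's structure theorem for connected solvable groups to obtain a chain of subgroups $U_i$ normal in $B$ with each quotient isomorphic to $\Ga$, but that theorem is stated over an algebraically closed field, and over $F$ such a $B$-normal refinement with $\Ga$-quotients need not exist in the quasi-split case: already for outer type $\cA_2$ the quotient $U/Z(U)\cong\WR_{K/F}\Ga$ carries the $T=\WR_{K/F}\Gm$-action by multiplication and hence has no $T$-stable $1$-dimensional $F$-subgroup. Your coarser filtration with vector-group quotients $\WR_{L_i/F}\Ga$ is the right formulation over $F$, and the argument then goes through unchanged since $H^1(-,\Ga^d)=0$ just as well. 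The paper's argument is morally the same and easily repaired in this way; you have simply written down the honest version.
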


\begin{proof}
Let $U$ be the unipotent part of $B$.
By \cite[Theorem 10.6 of Chapter III]{MR0251042},
$U$ is a normal subgroup of $G$ possessing a finite increasing chain of normal
subgroups $U_i$ with each successive quotients isomorphic to $\Ga$, and
$B$ is a semidirect product of $U$ by $T$.
Since $H^1(L,\Ga)$ is trivial for any field extension $L/F$,
the fiber over any point of the projection
$$
E/(U_{i-1}\rtimes T)\to E/(U_i\rtimes T)
$$
is isomorphic to an
affine line
so that the pull-back of Chow groups is an isomorphism
by the homotopy invariance of Chow groups, \cite[Theorem 57.13]{EKM}.
\end{proof}

From now on,
let $G$ be a non-split quasi-split adjoint absolutely simple affine algebraic group
over a field $F$ of type $\cA_{n-1}$, $n\geq 2$, becoming split over a separable quadratic field extension
$K/F$.
We set $r:=\lfloor n/2\rfloor$ (the floor of $n/2$).

\begin{lemma}
\label{T}
The group $G$ contains a Borel subgroup $B$ and a maximal torus $T\subset B$
such that
$$
T\simeq
\begin{cases}
\Gm\times(\WR\Gm)^{\times(r-1)}, \text{ if $n$ is even (i.e., $n=2r$);}\\
(\WR\Gm)^{\times r}, \text{ if $n$ is odd (i.e., $n=2r+1$),}
\end{cases}
$$
where $\WR=\WR_{K/F}$ is the Weil transfer with respect to the field extension $K/F$.
\end{lemma}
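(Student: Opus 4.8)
The plan is to realize $G$ concretely as the projective unitary group of a split hermitian space and then exhibit a maximal torus inside a Borel subgroup by explicit diagonalization. Since $G$ is adjoint, absolutely simple of type $\cA_{n-1}$, quasi-split but non-split, becoming split over the quadratic extension $K/F$, it is of outer type $^2\!\cA_{n-1}$; such a group is $\mathrm{PGU}(h)$ for a non-degenerate $K/F$-hermitian form $h$ of dimension $n$ on a $K$-vector space $V$, and quasi-splitness forces $h$ to be (the adjoint of) a \emph{split} hermitian form, i.e. one admitting a maximal-dimensional totally isotropic subspace. First I would fix such an $h$, choose a hyperbolic (Witt) basis of $V$ adapted to a full isotropic flag, and let $B$ be the stabilizer in $G$ of that full flag of totally isotropic subspaces (a Borel subgroup, since for the quasi-split form the variety of such flags is the full flag variety of $G$).

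Next I would compute the maximal torus $T\subset B$ as the image in $\mathrm{PGU}(h)$ of the subgroup of $\mathrm{GU}(h)$ acting diagonally in the chosen Witt basis. Writing $n=2r$ or $n=2r+1$, the Witt basis consists of $r$ hyperbolic pairs $(e_i,f_i)$ (with $h(e_i,f_i)=1$, $e_i,f_i$ isotropic) together with, when $n$ is odd, one extra anisotropic vector $e_0$. A diagonal similitude must send $e_i\mapsto \lambda_i e_i$, $f_i\mapsto \mu_i f_i$ with $\mu_i=\bar\lambda_i^{-1}\nu$ where $\nu\in F^\times$ is the multiplier and $\lambda_i\in K^\times$; in the odd case $e_0\mapsto\lambda_0 e_0$ with $\lambda_0\bar\lambda_0=\nu$. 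Thus the maximal torus of $\mathrm{GU}(h)$ is $\Gm\times(\WR_{K/F}\Gm)^{\times r}$ in the even case (the $\Gm$ factor recording $\nu$, each $\WR_{K/F}\Gm$ recording a $\lambda_i\in K^\times=(\WR_{K/F}\Gm)(F)$) and, in the odd case, the subgroup of $\Gm\times(\WR_{K/F}\Gm)^{\times r}$ cut out by the norm-one condition on $(\lambda_0,\nu)$, which is isomorphic to $(\WR_{K/F}\Gm)^{\times r}$ via projection to the $\lambda_1,\dots,\lambda_r$ coordinates (using that $\lambda_0$ and $\nu$ are then determined up to the norm-one torus $R^{(1)}_{K/F}\Gm$, which however also gets absorbed — here one must be slightly careful). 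Passing to $\mathrm{PGU}$, i.e. quotienting by the central $\Gm$ (the scalar $K^\times$ homotheties), kills exactly one $\WR_{K/F}\Gm$-worth of freedom: in the even case $\Gm\times(\WR_{K/F}\Gm)^{\times r}$ modulo the central $\Gm\hookrightarrow\WR_{K/F}\Gm$ (diagonally, with the multiplier absorbed appropriately) becomes $\Gm\times(\WR_{K/F}\Gm)^{\times(r-1)}$, and in the odd case $(\WR_{K/F}\Gm)^{\times(r+1)}$-type data modulo center and norm-condition collapses to $(\WR_{K/F}\Gm)^{\times r}$.

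The bookkeeping of which central and norm-one subtori get quotiented out, and verifying that the resulting torus is \emph{exactly} $\Gm\times(\WR\Gm)^{\times(r-1)}$ resp. $(\WR\Gm)^{\times r}$ on the nose (not merely isogenous to it), is the main obstacle: one has to track the cocharacter/character lattices with their $\Gamma=\Gal(K/F)$-action and check the extension splits as claimed. Concretely I would identify the character lattice of the maximal torus of $\mathrm{GU}(h)_K\cong\mathrm{GL}_n\times\Gm$, with its $\Gamma$-action permuting the two halves of the diagonal (the $e_i$- and $f_i$-coordinates) by the outer involution, take the quotient lattice corresponding to $\mathrm{PGU}$, and exhibit a $\Gamma$-equivariant isomorphism of that lattice with the character lattice of the claimed torus — for $\WR_{K/F}\Gm$ the character lattice is $\Z[\Gamma]$ (the regular representation), which makes the required identifications a matter of splitting a short exact sequence of $\Gamma$-lattices, and $\Z[\Gamma]$ being free over $\Z[\Gamma]$ makes the relevant $\mathrm{Ext}$ vanish, so the sequence splits. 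I expect the clean way to present this is to do the lattice computation once, uniformly, distinguishing only at the end by the parity of $n$; the harder-to-write part is making the choice of Witt basis and the passage to the adjoint quotient precise enough that the two claimed isomorphism types fall out without ambiguity in the multiplier/norm factors.
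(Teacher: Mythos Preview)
Your approach is essentially the same as the paper's: realize $G$ concretely as the adjoint unitary group of a split hermitian space, pick a Witt basis, and read off the maximal torus from the diagonal subgroup. The paper differs in one streamlining choice: instead of passing through $\mathrm{GU}(h)$ and then quotienting by the center, it maps directly from the diagonal torus of the \emph{isometry} group $U(h)$ into $G=\Aut(\End_K K^n,\tau)$. Concretely, for $n=2r$ it sends $(\lambda_1,\dots,\lambda_r)\in(\WR\Gm)^r$ to conjugation by $\mathrm{diag}(\lambda_1,\sigma(\lambda_1)^{-1},\dots,\lambda_r,\sigma(\lambda_r)^{-1})$; the kernel is the diagonally embedded norm-one torus $\WR^{(1)}\Gm$, and one checks $(\WR\Gm)^r/\WR^{(1)}\Gm\simeq\Gm\times(\WR\Gm)^{r-1}$ directly (e.g.\ the map $(\lambda_1,\dots,\lambda_r)\mapsto(N(\lambda_1),\lambda_2\lambda_1^{-1},\dots,\lambda_r\lambda_1^{-1})$ exhibits this). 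The odd case is handled the same way with an extra $\WR^{(1)}\Gm$ factor for the anisotropic vector. This sidesteps the multiplier $\nu$ entirely and makes the bookkeeping you flagged as ``the main obstacle'' disappear.

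One small slip to fix in your write-up: the center of $\mathrm{GU}(h)$ is $\WR_{K/F}\Gm$ (all of $K^\times$ acting by scalars), not $\Gm$; you say both in the same sentence. If you stick with the $\mathrm{GU}$ route, the quotient you need is by a rank-$2$ torus, and the counting then works out.

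Finally, the paper also records (in a remark following the lemma) the slicker argument you are gesturing at with character lattices: for \emph{any} quasi-split adjoint semisimple group, the simple roots form a $\Gamma$-stable basis of the character lattice of a maximal torus in a Borel, so that torus is automatically $\WR_{L/F}\Gm$ for the \'etale algebra $L$ with $\Gamma$-set the vertices of the Dynkin diagram. For outer $\cA_{n-1}$ with the order-$2$ diagram involution, this gives $L\simeq F\times K^{r-1}$ (even $n$) or $L\simeq K^r$ (odd $n$) immediately, with no case analysis of hyperbolic pairs at all. If you want the ``uniform lattice computation'' you mention, this is it.
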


\begin{proof}
In the case of $n=2r$,
let $h$ be the orthogonal sum of $r$ copies of the $K/F$-hermitian
{\em hyperbolic plane} (the $K/F$-hermitian form on the vector space $K^2$ of the matrix
$\begin{pmatrix}
0&1\\ 1&0
\end{pmatrix}
$).
In the case of $n=2r+1$, we let $h$ be the orthogonal sum of $r$ copies of the
hyperbolic plane and of the $K/F$-hermitian space $\<1\>$.
In both cases, $h$ is a hermitian form on the $K$-vector space $K^n$.
Up to an isomorphism,
$G$ is the $F$-group of automorphisms of the $K$-algebra $\End_K(K^n)$ with
the (unitary) adjoint involution.
We may assume that $G$ is this group of automorphisms.

For $n=2r$, we have a homomorphism $\alpha:(\WR\Gm)^{\times r}\to G$, mapping an $F$-point
$(\lambda_1,\dots,\lambda_r)\in (\WR\Gm)^{\times r}(F)=K^\times\times\dots\times K^\times$
to the automorphism of $\End_KK^n$ which is the conjugation by the
diagonal automorphism $(\lambda_1,\sigma(\lambda_1)^{-1},\dots,\lambda_r,\sigma(\lambda_r)^{-1})$
of $K^n$ preserving $h$,
where $\sigma$ is the non-trivial automorphism of $K/F$.
An $F$-point $(\lambda_1,\dots,\lambda_r)$ is in the kernel of $\alpha$ if
and only if
$\lambda_1=\sigma(\lambda_1)^{-1}=\dots=\lambda_r=\sigma(\lambda_r)^{-1}$.
It follows that $\Ker\alpha$ is the kernel $\WR^{(1)}\Gm$ of the norm map
$\WR\Gm\to\Gm$ sitting in $(\WR\Gm)^{\times r}$ diagonally.
The quotient $(\WR\Gm)^{\times r}/\Ker\alpha$ is therefore a torus isomorphic to
$\Gm\times(\WR\Gm)^{\times(r-1)}$ and to a closed subgroup $T\subset G$.
Since the dimension of the torus $T$ is $2r-1=n-1$ and $n-1$ is the rank of
$G$, the torus $T$ is maximal in $G$.

For $n=2r+1$, any $F$-point
$(\lambda,\lambda_1,\dots,\lambda_r)\in\WR^{(1)}\Gm(F)\times(\WR\Gm)^{\times r}(F)$
produces a preserving $h$ diagonal automorphism
$(\lambda,\lambda_1,\sigma(\lambda_1)^{-1},\dots,\lambda_r,\sigma(\lambda_r)^{-1})$
of $K^n$.
The kernel of the resulting homomorphism
$\WR^{(1)}\Gm\times(\WR\Gm)^{\times r}\to G$
is $\WR^{(1)}\Gm$, and the quotient by the kernel is the required torus
$(\WR\Gm)^{\times r}$ (of dimension $2r=n-1$).

The torus $T$ constructed in both cases is contained in the Borel subgroup
$B$ of $G$ which is the stabilizer of the flag of length $r$ of subspaces in $K^n$
where the $i$th subspace is the sum of the first $i$ odd summands of
$K^n$.
\end{proof}

\begin{rem}
\label{asmrem}
Actually, for {\em any} quasi-split semisimple adjoint affine algebraic group $G$
and for {\em any} Borel subgroup $B$ of $G$ (which exists because $G$ is quasi-split), {\em any}
maximal torus $T$ of $B$ (is maximal in $G$ and) is isomorphic to
$\WR_{L/F}\Gm$,
where $L$ is an \'etale $F$-algebra with the $\Gamma$-set $\Hom_{F\mathrm{\text{-}alg}}(L,\Fsep)$ isomorphic to
the set of vertices of the Dynkin diagram of $G$.
Indeed, this set of vertices is the set $X\subset (T_{\Fsep})^*$ of simple roots given by
$B$, and
we get that $T\simeq\WR_{L/F}\Gm$, because $X$ is a permutation basis of $(T_{\Fsep})^*$
($X$ generates $(T_{\Fsep})^*$ because $G$ is adjoint).

In particular, for $G$ as in Lemma \ref{T},
$T$ has the required isomorphism type because in this case
the action of $\Gamma$ on $X$ factors through the
Galois group of the quadratic separable field extension $K/F$ whose the
non-trivial element exchanges the vertices which are symmetric with respect to the
middle of the diagram.
It follows that $L\simeq K^{\times r}$ for $n=2r+1$ and $L\simeq F\times K^{\times(r-1)}$ for
$n=2r$ (note that $\#X=n-1$).
\end{rem}

For the rest of this section, we fix a torus $T\subset G$
and a Borel subgroup $B\subset G$ containing $T$ as in Lemma \ref{T}.


Let $E$ be a generic principle homogeneous space of $G$ as defined in \cite[\S3]{MR1999385}.
Thus $E$ is the generic fiber of certain $G$-torsor over certain smooth
geometrically irreducible $F$-variety having the versal property of
\cite[Definition 5.1(2)]{MR1999384} (we only need the weak version
\cite[Remark 5.8]{MR1999384} of the versality).

In particular, $E$ is a $G$-torsor over a regular field extension of $F$, not over $F$ itself.
For the sake of simplicity of notation, we let now $F$ be the field of
definition of $E$ (and $K$ be the quadratic extension of the new $F$ given by the tensor product
of the old $K$ and the new $F$ over the old $F$).

\begin{prop}
\label{E/B}
For $E$ as right above,
the image of $\CH(E/B)$ in the quotient
$$\CH(G/B)/\Im\big(\CH(G_K/B_K)\to\CH(G/B)\big)$$
is generated (as a ring) by elements of codimension $1$ and $2$.
\end{prop}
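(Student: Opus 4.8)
The plan is to combine the equivariant computation of Section~\ref{Some equivariant Chow groups of the point} with the generic-torsor machinery developed so far in this section. First I would invoke Corollary~\ref{humphreys} together with Lemma~\ref{B/T} to reduce from $\CH(E/B)$ to $\CH(E/T)$: since the pull-back $\CH(E/B)\to\CH(E/T)$ is an isomorphism, and similarly $\CH(G/B)\to\CH(G/T)$ and $\CH(G_K/B_K)\to\CH(G_K/T_K)$ are isomorphisms compatible with change of fields, it suffices to prove that the image of $\CH(E/T)$ in $\CH(G/T)/\Im\big(\CH(G_K/T_K)\to\CH(G/T)\big)$ is generated by elements of codimension $1$ and $2$. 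The point of passing to $T$ is that the quotient $G/T$ (over a splitting field) has Chow ring controlled by the symmetric algebra of the character lattice of $T$, and the image of $\CH(E/T)$ in $\CH(G/T)$ is, for a generic torsor $E$, governed by the characteristic map from the $T$-equivariant Chow ring of the point.

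Next I would make precise the statement, already used in \cite{MR2258262}, that for a generic $G$-torsor $E$ the image of $\CH(E/T)\to\CH(G/T)$ coincides with the image of the characteristic homomorphism $\CH_T(\Spec F)\to\CH(G/T)$ (equivalently, $\BCH(E/B)$ is a quotient of $\CH_T(\Spec F)$). The versality of $E$ (in the weak form of \cite[Remark 5.8]{MR1999384}) is exactly what lets one realize every class coming from the universal torsor, and Lemma~\ref{spec} provides the inclusion in the direction one needs by specializing from the generic point of the classifying variety. Granting this, the image of $\CH(E/B)$ modulo $K$-rational classes is a quotient ring of $\CH_T(\Spec F)$ modulo the image of the corestriction $\CH_{T}(\Spec K)\to\CH_T(\Spec F)$: indeed the $K$-rational part of $\CH(G/B)$ is, under the characteristic map, the image of the norms from $\Spec K$, because $K$ is precisely the splitting field and the transfer argument of Lemma~\ref{injects} identifies $\Im\big(\CH(G_K/B_K)\to\CH(G/B)\big)$ with classes defined over $K$.

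Finally I would apply the torus computation. By Lemma~\ref{T}, $T$ is isomorphic to $(\WR\Gm)^{\times r}$ when $n=2r+1$ and to $\Gm\times(\WR\Gm)^{\times(r-1)}$ when $n=2r$. In the first case Corollary~\ref{cor1} says $\CH_T(\Spec F)$ modulo norms from $\Spec K$ is generated by elements of codimension $2$; in the second case Corollary~\ref{cor2} gives generation by elements of codimension $1$ and $2$. Pushing forward along the characteristic map (a ring homomorphism) and through the quotient by $K$-rational classes, the image of $\CH(E/B)$ in $\CH(G/B)/\Im\big(\CH(G_K/B_K)\to\CH(G/B)\big)$ is therefore generated, as a ring, by elements of codimension $1$ and $2$, which is the assertion.

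I expect the main obstacle to be the identification in the middle paragraph: carefully proving that for the generic $E$ the image of $\CH(E/T)\to\CH(G/T)$ is exactly the image of $\CH_T(\Spec F)$, and that the $K$-rational subgroup corresponds precisely to norms from $\Spec K$. One inclusion is formal from functoriality of the characteristic map; the reverse inclusion is where versality and the specialization Lemma~\ref{spec} do the real work, and where one must be careful that ``generic'' is used in the sense of \cite[\S3]{MR1999385} so that all universal classes actually descend to $E$. Once this dictionary between $\BCH(E/B)$ and $\CH_T(\Spec F)/(\text{norms})$ is in place, the rest is a direct appeal to Corollaries~\ref{cor1} and~\ref{cor2}.
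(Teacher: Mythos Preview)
Your proposal is correct and follows essentially the same route as the paper: reduce via Lemma~\ref{B/T} to $T$, identify the image of $\CH(E/T)\to\CH(G/T)$ for the generic torsor with the image of $\CH_T(\Spec F)$ by citing \cite[Theorem 6.4]{MR2258262}, and then apply Corollaries~\ref{cor1} and~\ref{cor2}. Two small simplifications: the paper invokes \cite[Theorem 6.4]{MR2258262} directly for the middle identification rather than reproving it via versality and Lemma~\ref{spec}, and you only need the inclusion $\Im\big(\CH_T(\Spec K)\to\CH_T(\Spec F)\big)\subset$ ($K$-rational classes), not the reverse, since a surjection from a ring generated in codimensions $\leq2$ already forces the same for its image.
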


\begin{proof}
By Lemma \ref{B/T},
the image of
$$
\CH(E/B)\to\CH(G/B)/\Im\big(\CH(G_K/B_K)\to\CH(G/B)\big)
$$
is identified with the image of
$$
\CH(E/T)\to\CH(G/T)/\Im\big(\CH(G_K/T_K)\to\CH(G/T)\big).
$$
By \cite[Theorem 6.4]{MR2258262},
the image of $\CH(E/T)$ in $\CH(G/T)$ is identified with the image of
$\CH_T(\Spec F)$ in $\CH_T(G)=\CH(G/T)$.
The image
of
$$
\CH_T(\Spec F)\to\CH(G/T)/\Im\big(\CH(G_K/T_K)\to\CH(G/T)\big)
$$
coincides with the image of the quotient
$$
\CH_T(\Spec F)/\Im\big(\CH_T(\Spec K)\to\CH_T(\Spec F)\big).
$$
This quotient
is generated by its elements of codimension $1$ and $2$ by Corollaries
\ref{cor1} and \ref{cor2}.
\end{proof}

Note that for any $G$-torsor $E$ over $F$,
the kernel of the homomorphism $\CH(E/B)\to\CH(G/B)$ is the torsion
subgroup so that its image is identified with the Chow group $\CH(E/B)$
modulo torsion which we denote by $\BCH(E/B)$
(and call the {\em reduced} Chow group).
Furthermore, if $E$ is given by a hermitian form $h$,
then it splits over $K$;
therefore the image of the homomorphism
of $\CH(E/B)$ to $\CH(G/B)$ modulo the norms
is identified with the ring $\BCH(E/B)/N$, where
$N:=\Im\big(\CH(E_K/B_K)\to\BCH(E/B)\big)$ is the norm ideal.

We are going to consider a {\em generic hermitian form} $h$ of dimension $n$ (for a fixed
separable quadratic field extension $K/F$) by which we mean
the diagonal $K(t_1,\dots, t_n)/F(t_1,\dots,t_n)$-hermitian form
$\<t_1,\dots,t_n\>$, where $t_1,\dots,t_n$ are variables.
(For a justification of this definition, note that any hermitian form can be diagonalized,
\cite[Theorem 6.3 of Chapter 7]{MR770063}, so that an arbitrary $n$-dimensional $K/F$-hermitian form
is a specialization of $h$.)
Changing notation, we write now $F$ for $F(t_1,\dots,t_n)$ and $K$ for
$K(t_1,\dots,t_n)$.

\begin{cor}
\label{corE/B}
Let $Y$ be the variety of complete flags of totally isotropic subspaces of
the defined above generic hermitian form $h$.
Then the image of
$$
\CH(Y)\to\CH(G/B)/\Im\big(\CH(G_K/B_K)\to\CH(G/B)\big)
$$
is generated (as a ring) by the elements of
codimension $1$ and $2$.
Equivalently, the ring $\BCH(Y)/N$ is generated by codimension $1$ and
$2$.
\end{cor}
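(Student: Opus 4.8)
\emph{Proof proposal.} The plan is to reduce the statement to Proposition \ref{E/B}, by comparing the torsor of the generic hermitian form with the torsor obtained by generically splitting the Tits algebra of a generic $G$-torsor. I would first record that $Y=E/B$ for the $G$-torsor $E$ associated with $h$; since $h$ is split by $K$, the image of $\CH(Y)$ in $\CH(G/B)/N$ is exactly $\BCH(Y)/N$, so it suffices to control this image. Throughout, by Lemma \ref{injects} the ring $\CH(G/B)$ and its norm ideal $N$ do not change under the (regular) base changes occurring below, and I would use this identification freely.

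Next I would take a generic $G$-torsor $E_0$ as in Proposition \ref{E/B}, over its field of definition $F_0$, so that the image of $\CH(E_0/B)$ in $\CH(G/B)/N$ is generated by elements of codimension $1$ and $2$. The Tits algebra of $E_0$ is a central simple algebra $A$ of degree $n$ over the quadratic field extension $K_0:=K\otimes_F F_0$, carrying a $K_0/F_0$-unitary involution. I would then pass to the function field $F_1$ of the Weil transfer $\WR_{K_0/F_0}\mathrm{SB}(A)$: this is geometrically irreducible over $F_0$, so $F_1/F_0$ is regular and $K_0\otimes_{F_0}F_1$ is again a field, while over $F_1$ the algebra $A$ splits, so that $(E_0)_{F_1}$ is the torsor of an $n$-dimensional hermitian form $h_1$. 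Diagonalising $h_1$ presents it as a specialisation of the generic diagonal form $h=\langle t_1,\dots,t_n\rangle$; conversely $h_1$ --- being produced from a generic torsor by a regular field extension --- is versal among $n$-dimensional hermitian forms, so $h$ is in turn a specialisation of $h_1$. Applying Lemma \ref{spec} in both directions to the tautological families of the $G$-torsors of these two forms, I would conclude that the images of $\CH(Y)$ and of $\CH\big((E_0)_{F_1}/B\big)$ in $\CH(G/B)$ coincide, compatibly with $N$.

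It then remains to show that this common image equals the image of $\CH(E_0/B)$ modulo norms; granting that, Proposition \ref{E/B} finishes the proof. One inclusion --- that the common image contains the image of $\CH(E_0/B)$ --- is simply the base change $F_0\to F_1$. The real point, which I expect to be the main obstacle, is the reverse inclusion: generically splitting $A$ must create no new rational cycles modulo norms on the flag variety. This is exactly where the study of quadratic Weil transfers of projective bundles of Section \ref{Weil transfer of projective bundles} enters, as flagged in the introduction: since $A$ lives over the quadratic extension $K_0$ and $F_1$ arises from the quadratic Weil transfer $\WR_{K_0/F_0}\mathrm{SB}(A)$, one would compute $\CH$ modulo norms of the product of $E_0/B$ with that Weil transfer and read off that every cycle rational over $F_1$ is, modulo norms, already rational over $F_0$. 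By contrast, the two specialisation comparisons and the checks that the auxiliary field extensions are regular are routine.
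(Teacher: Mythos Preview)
Your overall architecture matches the paper's proof: compare the generic hermitian form with the torsor obtained from a generic $G$-torsor $E_0$ by passing to the function field $F_1$ of $\WR_{K_0/F_0}\mathrm{SB}(A)$, identify the two images in $\CH(G/B)$ via mutual specialisation, and then invoke Proposition~\ref{E/B} together with the material of Section~\ref{Weil transfer of projective bundles}. That is exactly what the paper does.

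There is, however, a genuine misstatement at the crucial step. You assert that Section~\ref{Weil transfer of projective bundles} lets you ``read off that every cycle rational over $F_1$ is, modulo norms, already rational over $F_0$'', i.e.\ that the images of $\CH(E_0/B)$ and of $\CH\big((E_0)_{F_1}/B\big)$ in $\CH(G/B)/N$ coincide. Proposition~\ref{B} does not say this, and there is no reason to expect it: splitting the Tits algebra can well create new rational cycles. What Proposition~\ref{B} actually gives is that the $\CH(E_0/B)$-algebra $\CH\big((E_0/B)\times X\big)/N$ is generated by a \emph{single codimension-$2$ element} $c$; after passing to the generic fibre over $X$, the image of $\CH\big((E_0)_{F_1}/B\big)$ in $\CH(G/B)/N$ is therefore generated, over the image of $\CH(E_0/B)$, by one extra codimension-$2$ class. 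Combined with Proposition~\ref{E/B}, this still yields generation in codimensions $\leq 2$, which is all that is required. This is precisely how the paper argues: it never claims equality of the two images, only that the larger one remains generated in codimension $\leq 2$.

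A smaller point: your justification that $h_1$ is versal among hermitian forms (``being produced from a generic torsor by a regular field extension'') is not sufficient as stated --- a regular base change need not preserve versality. The reason it works here is that any hermitian-form torsor has split Tits algebra, so when one specialises the versal family underlying $E_0$ to such a torsor, the fibre of $\WR\mathrm{SB}(A)$ is a Weil transfer of a projective space and hence has rational points; this furnishes the required $L$-points of the base of the enlarged family. The paper's ``similarly'' hides the same verification.
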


\begin{proof}
By the specialization argument as in Lemma \ref{spec},
for any regular field extension $F'/F$ and any $n$-dimensional $K'/F'$-hermitian form $h'$,
where $K':=K\otimes_FF'$,
the image of $\CH(Y)\to\CH(G/B)$ is contained in the image of
$\CH(Y')\to\CH(G/B)$, where $Y'$ is the variety of complete flags of $h'$.
Similarly,
the image of $\CH(E/B)\to\CH(G/B)$ is contained in the image of
$\CH(Y')\to\CH(G/B)$ for the $G$-torsor $E$ constructed below.
This $G$-torsor $E$ is given by certain
$n$-dimensional $K'/F'$-hermitian form $h'$.
It follows that
the image of $\CH(Y)\to\CH(G/B)$ coincides with the image of $\CH(E/B)\to\CH(G/B)$
so that the graded rings
$\BCH(Y)/N$ and $\BCH(E/B)/N$ are isomorphic and it
suffices to show that the latter ring is generated by elements of
codimension $\leq2$.

To construct the $G$-torsor $E$,
let us take a generic principle $G$-homogeneous space $E'$ of
\cite[\S3]{MR1999385} and climb over the
function field of the Weil transfer of the Severi-Brauer variety of the corresponding central
simple algebra.
In more details, the torsor $E'$ (changing notation, we may assume that it is defined over $F$)
corresponds to a degree $n$ central simple $K$-algebra $A$ endowed
with an $F$-linear unitary involution $\tau$, \cite[\S29D]{MR1632779}.
Over the function field $F(X)$ of the Weil transfer $X$ with respect to $K/F$ of the Severi-Brauer
$K$-variety of $A$,
the algebra $A$ splits and the involution $\tau$ becomes adjoint with respect
to some $K(X)/F(X)$-hermitian form.
We set $E:=E'_{F(X)}$.

Writing now $Y$ for the $F$-variety of complete flags of right $\tau$-isotropic ideals
in $A$, our aim is to show that $\CH(Y_{F(X)})$ modulo torsion and norms
is generated by codimensions $\leq2$.
Since the variety $Y$ is isomorphic to $E'/B$, we know by Proposition
\ref{E/B} that $\CH(Y)$ is so.
The projection $Y\times X\to Y$ is a Weil transfer of a projective bundle like the one considered in
\S\ref{Weil transfer of projective bundles}.
The $\CH(Y)$-algebra $\CH(Y\times X)$ is generated by codimension
$2$ by Proposition \ref{B}.
Since the pull-back $\CH(Y\times X)\to\CH(Y_{F(X)})$ is surjective,
the $\CH(Y)$-algebra $\CH(Y_{F(X)})$ is generated by codimension
$2$.
\end{proof}

\section
{Weil transfer of projective bundles}
\label{Weil transfer of projective bundles}

Let $K/F$ be a separable quadratic field extension.
Let $X$ be a smooth geometrically irreducible $F$-variety.
Let $\cE$ be a vector bundle over $X_K$.
We are going to consider the Weil transfer $Y\to X$ of the projective
bundle $P\to X_K$ of $\cE$ with respect to the base change morphism $X_K\to X$.
We refer to \cite[\S7.6]{MR1045822} and \cite[\S4]{MR1321819} for generalities on Weil
transfers of schemes (also called {\em Weil restriction} and {\em corestriction} in the literature).

We will need the following statement, where $P$ can be any variety with a
morphism to $X_K$:

\begin{lemma}
\label{tak Weil}
The Weil transfer of $P$ with respect to $X_K\to X$
can be obtained as the Weil transfer with respect to $K/F$ (which produces
an $\WR_{K/F}(X_K)$-scheme) followed by the base change with respect to the
``diagonal'' morphism $X\to\WR_{K/F}(X_K)$ corresponding to the identity under
the identification
$\Mor_F(X,\WR_{K/F}(X_K))=\Mor_K(X_K,X_K)$.
\end{lemma}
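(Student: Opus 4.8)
The plan is to verify the asserted identification on the level of functors of points over the base $X$ and then invoke Yoneda's lemma; nothing beyond the universal properties of the two kinds of Weil transfer enters.

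First I would record those universal properties. For an $X$-scheme $T$, set $T_K:=T\times_XX_K$; then $\WR_{X_K/X}(P)$, the Weil transfer along the finite flat morphism $X_K\to X$, is characterised by a bijection $\Mor_X(T,\WR_{X_K/X}(P))=\Mor_{X_K}(T_K,P)$ natural in $T$, where $T_K$ is viewed as an $X_K$-scheme via the projection. On the other hand $\WR_{K/F}$ is right adjoint to $-\otimes_FK$, so $\Mor_F(S,\WR_{K/F}(Q))=\Mor_K(S_K,Q)$ for every $F$-scheme $S$ and $K$-scheme $Q$; taking $Q=P$ and $Q=X_K$ and using that the morphism $\WR_{K/F}(P)\to\WR_{K/F}(X_K)$ induced by the structure morphism $P\to X_K$ is, on points, post-composition with $P\to X_K$, one obtains the relativised statement: for an $F$-scheme $T$ given over $\WR_{K/F}(X_K)$, i.e. given with a $K$-morphism $\psi\colon T_K\to X_K$, the set $\Mor_{\WR_{K/F}(X_K)}(T,\WR_{K/F}(P))$ is canonically $\Mor_{X_K}(T_K,P)$ with $T_K$ made an $X_K$-scheme via $\psi$. (Existence of all Weil transfers in play is guaranteed by quasi-projectivity; see the references cited in this section.)

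Next I would pin down the diagonal morphism $\delta\colon X\to\WR_{K/F}(X_K)$: by definition it corresponds to $\id_{X_K}$ under $\Mor_F(X,\WR_{K/F}(X_K))=\Mor_K(X_K,X_K)$, so by naturality of this adjunction in the source, the composite $\delta\compose(T\to X)$ corresponds to the base-changed morphism $T_K\to X_K$, which is exactly the canonical projection. Now set $Z:=\WR_{K/F}(P)\times_{\WR_{K/F}(X_K)}X$, the fibre product taken along $\delta$. An $X$-morphism $T\to Z$ amounts to an $F$-morphism $T\to\WR_{K/F}(P)$ whose composite with $\WR_{K/F}(P)\to\WR_{K/F}(X_K)$ equals $\delta\compose(T\to X)$; by the relativised description of the previous paragraph, together with the identification of $\delta\compose(T\to X)$ with the projection $T_K\to X_K$, this set is naturally $\Mor_{X_K}(T_K,P)$ with the projection $X_K$-scheme structure on $T_K$. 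Thus $Z$ represents the same functor on $X$-schemes as $\WR_{X_K/X}(P)$, the identifications being natural in $T$ and compatible with the maps down to $X$, and Yoneda yields the desired isomorphism of $X$-schemes $\WR_{X_K/X}(P)\cong Z$.

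The only step requiring genuine attention is this last bookkeeping of adjunctions — in particular confirming that the $K$-morphism extracted from $\delta\compose(T\to X)$ is precisely the canonical projection $T_K\to X_K$, so that the condition cutting out the fibre product $Z$ coincides verbatim with the defining condition of $\WR_{X_K/X}(P)$. Everything else is a formal chase.
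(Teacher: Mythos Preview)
Your proof is correct and follows essentially the same route as the paper's: both set $Z=\WR_{K/F}(P)\times_{\WR_{K/F}(X_K)}X$ and verify via Yoneda that $\Mor_X(T,Z)=\Mor_{X_K}(T_K,P)$, the crux being that the $\WR_{K/F}(X_K)$-structure on $T$ induced by $\delta$ corresponds to the canonical projection $T_K\to X_K$. The paper organises the same bookkeeping slightly differently (first identifying $\Mor_X(S,Z)=\Mor_{\WR X_K}(S,\WR P)$ via the fibre product and then matching subsets inside $\Mor_F(S,\WR P)=\Mor_K(S_K,P)$), but the content is identical.
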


\begin{proof}
We write $\WR$ for $\WR_{K/F}$.
Let $Y$ be the fibred product $\WR P\times_{\WR X_K} X$.
To show that $Y$ is the Weil transfer of $P$ with respect to $X_K\to X$,
it suffices to check that for any $X$-scheme $S$,
the set
$\Mor_X(S,Y)$ is naturally identified with the set $\Mor_{X_K}(S_K,P)$
(note that $S_K:=S\times_F\Spec K=S\times_XX_K$).
By properties of the fibred product, we have a natural identification
\begin{equation}
\label{raz}
\Mor_X(S,Y)=\Mor_{\WR X_K}(S,\WR P).
\end{equation}
The set on the right hand side of this equality is a subset of the set $\Mor_F(S,\WR P)$
which is naturally identified with the set $\Mor_K(S_K,P)$.
The subset
$$
\Mor_{\WR X_K}(S,\WR P)\subset \Mor_F(S,\WR P)
$$
corresponds under this identification to the
subset $\Mor_{X_K}(S_K,P)\subset\Mor_K(S_K,P)$:
\begin{equation}
\label{dva}
\Mor_{\WR X_K}(S,\WR P)=\Mor_{X_K}(S_K,P).
\end{equation}
Indeed, by naturalness of the identification $\Mor_F(S,\WR P)=\Mor_K(S_K,P)$
in the right argument, the morphism $S_K\to X_K$ corresponding to the
composition of a given morphism $S\to\WR P$ with the morphism $\WR P\to\WR X_K$,
is the composition $S_K\to P\to X_K$.
Also note that the morphism $S_K\to X_K$ corresponding to the morphism
$$
(S\to\WR X_K)=(S\to X\to\WR X_K)
$$
is obtained from $S\to X$ by the base change (to see it use the
naturalness of the identification $\Mor(S,\WR X_K)=\Mor(S_K,X_K)$ in the left
argument).

Composing identifications (\ref{raz}) and (\ref{dva}), we get the identification required.
\end{proof}


For any integer $i$,
we have a map $\CH^i(P)\to\CH^{2i}(Y)$ (just a map, not a homomorphism) defined as the
composition of the map $\CH^i(P)\to\CH^{2i}(\WR_{K/F} P)$ of \cite[\S3]{MR1809664} followed by the
pull-back homomorphism $\CH^{2i}(\WR_{K/F} P)\to\CH^{2i}(Y)$
(we have in mind the construction of $Y$ given in Lemma \ref{tak Weil} here).
The first Chern class of the tautological line vector bundle on $P$
(the tautological vector bundle on $P$ corresponds to the locally free $\cO_P$-module $\cO_P(-1)$)
gives us therefore an element $c\in\CH^2(Y)$.

\begin{prop}
\label{B}
The $\CH(X)$-algebra $\CH(Y)/\Im\big(\CH(Y_K)\to\CH(Y)\big)$ is generated by the class of the element
$c$.
\end{prop}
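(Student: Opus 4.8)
The plan is to understand the structure of $Y$ over $X$ via Lemma \ref{tak Weil} and then reduce the statement modulo norms (i.e. modulo the image of $\CH(Y_K)$) to a computation that looks like the classical projective bundle formula. First I would set $W := \WR_{K/F}(X_K)$ and $Q := \WR_{K/F}P$, so that by Lemma \ref{tak Weil} we have $Y = Q \times_W X$, the base change of the bundle $Q \to W$ along the diagonal $\Delta\colon X \to W$. Over a separable closure (or already after base change to $K$), the variety $W$ becomes $X_K \times X_K$ (the two factors indexed by the two embeddings $K \hookrightarrow \Fsep$) and $Q$ becomes the external product $P \times P$ of two projective bundles; the diagonal $\Delta$ becomes, up to the swap, the diagonal embedding $X_K \to X_K \times X_K$, so that $Y_K$ is identified with the fibre product $P \times_{X_K} \overline{P}$ where $\overline{P}$ is the conjugate bundle. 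In particular $Y_K \to X_K$ is an honest iterated projective bundle, and by the classical projective bundle theorem $\CH(Y_K)$ is generated over $\CH(X_K)$ by the first Chern classes $\zeta_1, \zeta_2$ of the two tautological line bundles (the pullbacks of $\cO(-1)$ from the two factors $P$ and $\overline P$). This is the input from the projective bundle formula that I will exploit.

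Next I would identify the element $c \in \CH^2(Y)$ after base change to $K$. By construction $c$ is the pullback to $Y$ of the image under the map $\CH^1(P) \to \CH^2(\WR_{K/F}P) = \CH^2(Q)$ of \cite[\S3]{MR1809664} of the class $c_1(\cO_P(-1))$. The key property of that map, which I would quote from \cite{MR1809664}, is that after base change to $K$ (where the Weil transfer becomes a product of two conjugate copies) it sends a class $x$ to (essentially) the external product $\bar x \otimes x$, i.e. $c_1(\cO_P(-1))$ goes to $\zeta_1 \zeta_2 \in \CH^2(P \times \overline P)$ up to lower-order corrections coming from the symmetrization. Restricting further to the diagonal, $c_K \in \CH^2(Y_K)$ becomes $\zeta_1 \cdot \zeta_2$ modulo terms in $\CH(X_K)\cdot(\zeta_1 + \zeta_2)$ and $\CH(X_K)$. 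So modulo the subring generated over $\CH(X_K)$ by $\zeta_1 + \zeta_2$ (which, I claim, is the subring coming from $\CH(X)$ together with transfers from $K$), $\CH(Y_K)$ is generated as a $\CH(X_K)$-algebra by the single class $c_K$, together with $\zeta_1 + \zeta_2$.

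The remaining step is the descent from $Y_K$ to $Y$ modulo norms. Here I would argue as follows: the Galois group $\Gamma = \Gal(K/F)$ acts on $\CH(Y_K)$, swapping the two factors $P$ and $\overline P$ and hence swapping $\zeta_1 \leftrightarrow \zeta_2$; the image of $\CH(Y) \to \CH(Y_K)$ lands in the invariants $\CH(Y_K)^\Gamma$, and the image of the norm (transfer) $\CH(Y_K) \to \CH(Y)$ followed by restriction is exactly $(1 + \sigma)\CH(Y_K)$. By the elementary computation of Lemma \ref{S} applied to the polynomial ring $\CH(X_K)[\zeta_1, \zeta_2]$ (which is how $\CH(Y_K)$ looks as a $\CH(X_K)$-algebra, at least in the range of degrees that matters), the quotient of $\CH(Y_K)^\Gamma$ by $(1 + \sigma)\CH(Y_K)$ is generated over $\CH(X_K)^\Gamma$ by the product $\zeta_1 \zeta_2$, hence by $c_K$. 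Since $\CH(X)$ surjects onto $\CH(X_K)^\Gamma$ modulo norms (or at any rate the image of $\CH(X)$ generates what we need), we conclude that $\CH(Y)/\Im(\CH(Y_K) \to \CH(Y))$ is generated as a $\CH(X)$-algebra by the class of $c$, as desired.

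The main obstacle I anticipate is making precise the interaction between the Weil-transfer map $\CH^i(P) \to \CH^{2i}(\WR P)$ and base change to $K$ — i.e. verifying that $c_K$ really is $\zeta_1 \zeta_2$ up to terms that are absorbed by $\CH(X_K)\cdot(\zeta_1+\zeta_2) + \CH(X_K)$, and tracking the role of the diagonal base change — and then confirming that "modulo norms" precisely matches the operator $1 + \sigma$ of Lemma \ref{S} on the nose, so that the elementary Lemma \ref{S} can be invoked verbatim with $R = \CH(X_K)^{\Gamma}$ and the two "variables" being the Chern roots. The rest is the classical projective bundle theorem and bookkeeping.
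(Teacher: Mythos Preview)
Your approach is essentially the paper's argument in the special case $X=\Spec F$: there the motivic decomposition of $\WR_{K/F}\PS^{r-1}$ does give $\CH(Y)\simeq\CH(Y_K)^\sigma$, the norm image is exactly $(1+\sigma)\CH(Y_K)$, and the elementary Lemma~\ref{S} finishes the job. The gap is that for general $X$ the descent you sketch breaks down at two points.

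First, the restriction $\CH(Y)\to\CH(Y_K)$ need not be injective: its kernel is $2$-torsion (since $N\circ\res=2$), but there is no reason such torsion classes lie in $N(\CH(Y_K))$ or in the $\CH(X)$-subalgebra generated by $c$. So knowing $\res(\alpha)$ modulo $(1+\sigma)\CH(Y_K)$ does not pin down $\alpha$ modulo norms. Second, even on the image, your coefficients live in $\CH(X_K)^\Gamma$, not in $\CH(X)$; the map $\CH(X)\to\CH(X_K)^\Gamma$ is not surjective in general (your parenthetical ``or at any rate the image of $\CH(X)$ generates what we need'' is exactly the unproved step). Relatedly, Lemma~\ref{S} needs the base ring $R$ to be $\sigma$-fixed, but $\sigma$ acts nontrivially on $\CH(X_K)$; taking $R=\CH(X_K)^\Gamma$ you no longer have the polynomial description of $\CH(Y_K)$ over $R$ that the lemma requires.

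The paper avoids all of this by never passing to $K$ globally. After the base case $X=\Spec F$, it filters $\CH(Y)$ by the codimension in $X$ of the support and controls each associated graded piece via the fibres $Y_x$: when $K\not\subset F(x)$ the fibre is a genuine Weil transfer of a projective space and the specialization from the generic fibre is surjective (so the base case plus $\CH(X)$-linearity suffices); when $K\subset F(x)$ the entire contribution of $\CH(Y_x)$ lands inside the norm image. This localisation argument (Lemma~\ref{statement}) is the missing ingredient --- it replaces the Galois-descent step, which does not go through as stated.
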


\begin{proof}
We start by the case where $X=\Spec F$.
In this case, the ring $\CH(Y)$ is identified with the subring $R^\sigma$
of the
$\sigma$-invariant elements in the ring $R:=\CH(\PS^{r-1}\times\PS^{r-1})$,
where $r$ is the rank of the vector bundle $\cE$ and $\sigma$ is the
factor exchange involution on $R$.
The image of the norm map is the image of the homomorphism
$(1+\sigma):R\to R^\sigma$.
The ring $R^\sigma$ is generated by two elements: $h\times h$ and $1\times
h+h\times1$.
Therefore the quotient $R^\sigma/(1+\sigma)R$ is generated the class of $h\times
h$ which is the class corresponding to the element $c$ of the statement.

In order to do the general case,
we apply a variant of \cite[Statement 2.13]{Vishik-u-invariant}.
Let for a moment $X$ and $Y$ be arbitrary smooth $F$-varieties with
irreducible $X$.
For a morphism $\pi:Y\to X$,
consider the (finite) filtration
$$
\CH(Y)=\cF^0\CH(Y)\supset\cF^1\CH(Y)\supset\dots
$$
on $\CH(Y)$ with $\cF^i\CH(Y)$ being the
subgroup generated by the classes of cycles on $Y$ whose image in $X$ has
codimension $\geq i$.
For any point $x\in X$, let $Y_x$ be the fiber of $\pi$ over $x$.
For any $i$, let $X^{(i)}$ be the set of points of $X$ of codimension $i$.
There is a surjection
\begin{equation}
\label{surjekcija}
\Oplus_{x\in X^{(i)}}\CH(Y_x)\to\cF^i\CH(Y)/\cF^{i+1}\CH(Y),
\end{equation}
mapping an element $\alpha\in\CH(Y_x)$ to the class modulo $\cF^{i+1}\CH(Y)$ of the image
under the push-forward $\CH(Y_T)\to\cF^i\CH(Y)$ of an arbitrary preimage
of $\alpha$ under the pull-back epimorphism $\CH(Y_T)\onto\CH(Y_x)$, where
$T\subset X$ is the closure of $x$ and $Y_T:=Y\times_XT\hookrightarrow Y$ is
the preimage of $T$ under $Y\to X$.

\begin{lemma}[{cf. \cite[Statement 2.13]{Vishik-u-invariant}}]
\label{statement}
Let $\pi:Y\to X$ be as above and
let $\zeta$ be the generic point of $X$.
Let $B\subset\CH(Y)$ be a $\CH(X)$-submodule such that
\begin{enumerate}
\item[(a)]
the composition
$B\hookrightarrow\CH(Y)\to\CH(Y_\zeta)$ is surjective and
\item[(b)]
for any $x\in X$
either the specialization homomorphism $s_x:\CH(Y_\zeta)\to\CH(Y_x)$ is surjective
or the image of
$\CH(Y_x)\to\cF^i\CH(Y)/\cF^{i+1}\CH(Y)$ is
contained in the image of $B\cap\cF^{i}\CH(Y)\to\cF^i\CH(Y)/\cF^{i+1}\CH(Y)$.
\end{enumerate}
Then $B=\CH(Y)$.
\end{lemma}

\begin{proof}
We are repeating the proof of \cite[Statement 2.13]{Vishik-u-invariant}
making necessary modifications.
If the specialization homomorphism $s_x$ is surjective for a point $x\in
X^{(i)}$, then the image of $\CH(Y_x)$ under (\ref{surjekcija}) is
contained in the image of $[T]\cdot B\subset \cF^i\CH(Y)\cap B$ in the quotient $\cF^i\CH(Y)/\cF^{i+1}\CH(Y)$.
Otherwise, we know already by the hypothesis that it is contained in the image of $B\cap\cF^i\CH(Y)$.
\end{proof}

We are turning back to the proof of Proposition \ref{B} and our particular
$Y\to X$.
We apply Lemma \ref{statement},
taking for
$B$ the $\CH(X)$-submodule of $\CH(Y)$ generated by the powers of $c$
and the image of $\CH(Y_K)\to\CH(Y)$.
For any $x\in X$, the fiber $Y_x$
is isomorphic to the Weil transfer of $\PS^{r-1}$ with respect to the quadratic
\'etale $F(x)$-algebra $F(x)\otimes_FK$.
If $\zeta$ is the generic point, $F(\zeta)\otimes_FK$ is a field.
Therefore condition (a) of Lemma \ref{statement}, requiring that
the homomorphism $B\to\CH(Y_\zeta)$
is surjective, is satisfied.

Condition (b) of Lemma \ref{statement} is
satisfied as well.
Indeed, the specialization homomorphism $\CH(Y_\zeta)\to\CH(Y_x)$,
$x\in X$,
is surjective if the residue field of the point $x$ does not contain a subfield
isomorphic to $K$.
We finish the proof by showing that in the opposite case the image of
$\CH(Y_x)\to\cF^i\CH(Y)/\cF^{i+1}\CH(Y)$ (for $i$ such that $x\in X^{(i)}$) is
in the image of $\cF^i\CH(Y_K)\to\cF^i\CH(Y)/\cF^{i+1}\CH(Y)$.

Let $T$ be the closure of $x$ in $X$.
Let $Y_T=Y\times_XT\hookrightarrow Y$ be the preimage of $T$ under $Y\to
X$.
The image of the homomorphism $\CH(Y_x)\to \cF^i\CH(Y)/\cF^{i+1}\CH(Y)$
coincides with the image of the homomorphism
$\CH(Y_T)\to\cF^i\CH(Y)/\cF^{i+1}\CH(Y)$ induced by the push-forward.
Since $x$ is the generic point of $T$ and $F(x)=F(T)\supset K$, a
non-empty open subset $U\subset T$ possesses a morphism to $\Spec K$.
Its preimage $Y_U\subset Y_T$ is open and also possesses a morphism to $\Spec
K$.
Therefore $(Y_U)_K\simeq Y_U\coprod Y_U$ (as $F$-varieties) and, in particular,
the push-forward $\CH(Y_U)_K\to\CH(Y_U)$ is surjective.

The varieties and morphisms in play fit in the following commutative diagram:
$$
\divide\dgARROWLENGTH by9
\begin{diagram}
\node{Y_K}\arrow[2]{e}\node{}\node{Y}\arrow[2]{e}\node{}\node{X}\\
\node{(Y_T)_K}\arrow{n,J}\arrow[2]{e}\node{}\node{Y_T}\arrow[2]{e}\arrow{n,J}\node{}
\node{T}\arrow{n,J}\\
\node{(Y_U)_K}\arrow{n,J}\arrow[2]{e}\node{}\node{Y_U}\arrow{n,J}\arrow[2]{e}\node{}\node{U}
\arrow{n,J}
\end{diagram}
$$

It follows that the image of the push-forward
$\CH(Y_T)_K\to\CH(Y_T)$ generates $\CH(Y_T)$ modulo the image of
$\CH(Y_T\setminus Y_U)$.
Since the image of $\CH(Y_T\setminus Y_U)\to\CH(Y)$ is in
$\cF^{i+1}\CH(Y)$, it follows that
the image of $\CH(Y_T)\to\cF^i\CH(Y)/\cF^{i+1}\CH(Y)$
is contained in the image of $\cF^i\CH(Y_K)\to\cF^i\CH(Y)/\cF^{i+1}\CH(Y)$.
%
\end{proof}

We write $\BCH(Y)/N$ for the reduced Chow ring $\BCH(Y)$ modulo the norm
ideal $N:=\Im\big(\BCH(Y_K)\to\BCH(Y)\big)$.
In particular, $\BCH(X)/N$ is defined as
$\BCH(X)/\Im\big(\BCH(X_K)\to\BCH(X)\big)$.

As a particular case of the map $\CH^i(P)\to\CH^{2i}(Y)$ considered right before Proposition \ref{B},
we have a map $\CH^i(X_K)\to\CH^{2i}(X)$
and we write $c_i\in\CH^{2i}(X)$ for the class of the image of the Chern class
$c_i(\cE)\in\CH^i(X_K)$.

\begin{cor}
\label{corWeil}
The $\BCH(X)/N$-algebra $\BCH(Y)/N$
is generated by the class of $c$ subject
to the only one relation $\Sum_{i=0}^rc_ic^{r-i}=0$, where $r$ is the rank
of the vector bundle $\cE$.
\end{cor}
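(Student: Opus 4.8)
The plan is to combine the generation statement of Proposition \ref{B} with a direct analysis of the $\CH(X_K)$-module structure of $\CH(P)$, using the projective bundle formula, and then to pass to reduced Chow groups modulo norms. First I would recall that by the classical projective bundle theorem, $\CH(P)$ is a free $\CH(X_K)$-module with basis $1,\xi,\dots,\xi^{r-1}$, where $\xi=c_1(\cO_P(1))\in\CH^1(P)$, subject to the relation $\sum_{i=0}^r c_i(\cE)\,\xi^{r-i}=0$ (with an appropriate sign convention matching the definition of $\xi$ used just before Proposition \ref{B}, where the tautological bundle is $\cO_P(-1)$ and $c=c_1(\cO_P(-1))$ gives the class in $\CH^2(Y)$). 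Proposition \ref{B} already tells us that $\CH(Y)/\Im(\CH(Y_K)\to\CH(Y))$ is generated as a $\CH(X)$-algebra by the class of $c$; the content to be added here is the precise relation and the precise coefficient ring after reduction.

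The key steps, in order, are as follows. (1) Establish the relation: the map $\CH^i(P)\to\CH^{2i}(Y)$ defined before Proposition \ref{B} is compatible with products modulo norms — more precisely, it is the composite of the (non-additive) square map $\CH^i(\WR_{K/F}P)$ of \cite{MR1809664} with a pull-back, and the square map sends $c_1(\cO_P(1))$ to the class whose pull-back is $c$ and is multiplicative in the relevant sense. Applying the square construction to the bundle relation $\sum c_i(\cE)\xi^{r-i}=0$ and pulling back to $Y$ yields $\sum_{i=0}^r c_i\, c^{r-i}=0$ in $\CH(Y)$, hence a fortiori in $\BCH(Y)/N$. (2) Establish that this is the only relation: one checks the computation over $X=\Spec F$ first, exactly as in the opening paragraph of the proof of Proposition \ref{B}, where $\CH(Y)=R^\sigma$ with $R=\CH(\PS^{r-1}\times\PS^{r-1})$ and $R^\sigma/(1+\sigma)R$ is the free polynomial ring on the class of $h\times h$ truncated by the single relation coming from $h^r=0$ in each factor; then spread out over a general $X$ by the filtration/specialization machinery of Lemma \ref{statement}, observing that the associated graded pieces $\CH(Y_x)$ are governed by the same relation fiberwise and no new relations can appear because a nonzero polynomial in $c$ of degree $<r$ with coefficients in $\BCH(X)/N$ restricts nontrivially to the generic fiber $Y_\zeta$, where $F(\zeta)\otimes_F K$ is a field and the fiber computation applies. (3) Assemble: $\BCH(Y)/N$ is the free $\BCH(X)/N$-module on $1,c,\dots,c^{r-1}$, equivalently the $\BCH(X)/N$-algebra generated by $c$ with the sole relation $\sum_{i=0}^r c_i c^{r-i}=0$.

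The main obstacle I expect is step (2), and within it the verification that the only relation over a general base is the expected one — i.e., faithful flatness / freeness of $\BCH(Y)/N$ as a $\BCH(X)/N$-module rather than mere generation. The subtlety is that passing to the quotient by the norm ideal is not exact in general, so one cannot simply quote the projective bundle formula over $X_K$ and descend; instead one must run the argument through the filtration $\cF^\bullet\CH(Y)$ as in the proof of Proposition \ref{B}, checking that at each point $x\in X$ the fiber contribution is either killed by the norm ideal (when $F(x)\supset K$, handled exactly as in that proof) or contributes a free rank-$r$ piece matching the classical projective bundle formula over $F(x)$ (when $F(x)\otimes_F K$ is a field). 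A minor secondary point is pinning down the sign in the relation so that it is literally $\sum_{i=0}^r c_i c^{r-i}=0$ with the $c_i$ as defined (images of $c_i(\cE)$), which follows from the $\cO_P(-1)$ convention fixed in the text; I would simply remark that the square construction of \cite{MR1809664} is compatible with Chern classes and Segre classes, so applying it to the tautological exact sequence on $P$ transports the bundle relation verbatim.
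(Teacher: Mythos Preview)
Your approach to establishing the relation (step (1)) is essentially equivalent to the paper's: the paper works in $\CH(Y_K)$, identifies $Y_K$ with $P\times_{X_K}P'$, and multiplies the two projective bundle relations together; modulo norms the cross terms disappear and one gets $\sum_i c_i(\cE)c_i(\cE')(ab)^{r-i}=0$, which is the image of the desired relation under the injective map $\BCH(Y)/N\hookrightarrow\BCH(Y_K)/N$. Your use of the square construction amounts to the same computation. One minor slip: the square map is only additive \emph{modulo norms}, so the relation is obtained in $\BCH(Y)/N$, not in $\CH(Y)$ as you write.

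The real divergence is in step (2), and here your argument has a gap. Your key claim, that ``a nonzero polynomial in $c$ of degree $<r$ with coefficients in $\BCH(X)/N$ restricts nontrivially to the generic fiber $Y_\zeta$'', is false as stated: any coefficient $a_i\in\BCH^{>0}(X)/N$ restricts to zero at the generic point, so restriction to $Y_\zeta$ cannot detect such a relation. You then gesture at running the full filtration $\cF^\bullet\CH(Y)$, but that machinery in the proof of Proposition~\ref{B} only gives \emph{generation} (surjectivity onto each graded piece), not freeness; turning it into a freeness statement would require controlling the kernels of the maps $\bigoplus_{x}\CH(Y_x)\to\cF^i/\cF^{i+1}$ modulo norms, which you do not address and which is not obvious.

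The paper avoids this entirely by a much simpler device: it observes the inclusion of rings $\BCH(Y)/N\hookrightarrow\BCH(Y_K)^\sigma/N$ (and likewise for $X$). The right-hand side can be computed explicitly from the product-of-projective-bundles description of $Y_K$: writing an element as $\sum\alpha_{ij}a^ib^j$, $\sigma$-invariance forces $\sigma(\alpha_{ij})=\alpha_{ji}$, and every off-diagonal pair is a norm, so $\BCH(Y_K)^\sigma/N$ is free over $\BCH(X_K)^\sigma/N$ on $1,ab,\dots,(ab)^{r-1}$. Since $ab$ is the image of $c$ and $\BCH(X)/N\subset\BCH(X_K)^\sigma/N$, freeness of $1,c,\dots,c^{r-1}$ over $\BCH(X)/N$ follows immediately. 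This is both shorter and avoids the filtration altogether; I would recommend replacing your step (2) with this argument.
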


\begin{proof}
Let $\sigma$ be the nontrivial automorphism of $K/F$.
The variety $Y_K$ is identified with the product $P\times_{X_K}P'$, where
$P'\to X_K$ is the base change of $P\to X_K$ by the automorphism of $X_K$ induced by
$\sigma$;
clearly, $P'\to X_K$ is the projective bundle of the vector bundle $\cE'$
obtained from $\cE$ by the base changed induced by $\sigma$.
It follows that the $\CH(X_K)$-algebra $\CH(Y_K)$ is generated by two
elements $a:=c_1(\cO_P(-1))$ and $b:=c_1(\cO_{P'}(-1))$ subject to two
relations:
\begin{equation}
\label{two relations}
\Sum_{i=0}^r(-1)^ic_i(\cE)\cdot a^{r-i}=0
\;\;\text{ and }\;\;
\Sum_{i=0}^r(-1)^ic_i(\cE')\cdot b^{r-i}=0.
\end{equation}

The action of $\sigma$ on $\CH(X_K)$ exchanges the Chern classes of $\cE$
with the Chern classes of $\cE'$.
The action of $\sigma$ on $\CH(Y_K)$ exchanges $a$ with $b$.
Therefore the product of relations (\ref{two relations}) considered in the group
$\CH(Y_K)$ modulo the image of the composition $\CH(Y_K)\to\CH(Y)\to\CH(Y_K)$
gives the relation
$$
\Sum_{i=0}^rc_i(\cE)c_i(\cE')\cdot (ab)^{r-i}=0.
$$
Note that $ab\in\CH(Y_K)$ is the image of $c\in\CH(Y)$ and $c_i(\cE)c_i(\cE')\in\CH(X_K)$ is the image of
$c_i\in\CH(X)$.
Therefore,
the relation between the powers of $c$ of Corollary \ref{corWeil} holds in $\BCH(Y)/N$
because it holds in $\BCH(Y_K)/N\supset \BCH(Y)/N$.
(Here, abusing notation, we write $\CH(Y_K)/N$ for the quotient of
$\BCH(Y_K)$ by its subgroup $N:=\Im\big(\BCH(Y_K)\to\BCH(Y)\big)$.)

In particular, the $\BCH(X)/N$-module $\BCH(Y)/N$ is generated by
$1,c,\dots,c^{r-1}$.
The module generators are free, because they are free in $\BCH(Y_K)^\sigma/N\supset\BCH(Y)/N$
(see below).
It follows that the relation is the only one.

To finish the proof, let us consider the subgroup $\BCH(Y_K)^\sigma$ of
the $\sigma$-invariant elements in $\BCH(Y_K)$.
This subgroup contains $N$ and
the quotient $\BCH(Y_K)^\sigma/N$ is a $\BCH(X_K)^\sigma/N$-algebra
generated by $c$ subject to the one relation.
In particular, the elements $1,c,\dots,c^{r-1}$ are free over
$\BCH(X_K)^\sigma/N$ and therefore also free over $\BCH(X)/N\subset\BCH(X_K)^\sigma/N$.
\end{proof}

\begin{rem}
\label{c2i}
The element $c_i\in\CH^{2i}(X)$, defined right before Corollary \ref{corWeil}, can be also defined as the Chern class
$c_{2i}(\cE\to X)$, where $\cE\to X$ is the vector bundle
obtained by composition of the vector bundle $\cE\to X_K$ and the trivial vector bundle $X_K\to X$
(to see the structure of the vector bundle on $X_K\to X$,
note that the morphism $X_K\to X$ is a base change of the rank $2$ trivial vector bundle $\Spec K\to\Spec
F$).
Although $c_{2i}(\cE\to X)$ is different from $c_i$, their classes in $\BCH(X)/N$ coincide,
because the image of $c_{2i}(\cE\to X)$ in $\BCH(X_K)$ (which is easy to compute because the vector bundle $\cE\to X$
over $X_K$ is isomorphic to $\cE\oplus\cE'$) is congruent modulo $N$ to $c_i$.

Similarly, the element $c$ can be defined as the 2nd Chern class of the
rank $2$ vector bundle on $Y$ obtained as the Weil transfer of the rank
$1$ tautological vector bundle on $P$.
\end{rem}

\section
{Generic maximal unitary grassmannians}
\label{Generic maximal grassmannians}

Let now $h$ be a {\em generic} $K/F$-hermitian form of dimension
$n\geq2$ (defined right before Corollary \ref{corE/B}).
Let $r:=\lfloor n/2\rfloor$.
We consider the variety $X$ of $r$-dimensional totally isotropic subspaces
in $h$.
We keep the notation $G$ of the second half of Section \ref{Generic varieties of complete
flags}:
$G$ is a non-split quasi-split adjoint absolutely simple affine algebraic group
over a field $F$ of type $\cA_{n-1}$ becoming split over $K$.
Let $P$ be a parabolic subgroup
in $G$ representing the conjugacy class corresponding to $X$.


The aim of this section is to describe the image of the homomorphism
$$
\CH(X)\to\CH(G/P)/\Im\big(\CH(G_K/P_K)\to\CH(G/P)\big)
$$
(constructed (in a more general situation) right before Lemma \ref{spec}).
We recall that the $G$-torsor given by $h$ splits over $K$;
therefore the image of the above homomorphism is identified with the ring
$\BCH(X)/N$, where $N:=\Im\big(\BCH(X_K)\to\BCH(X)\big)$.
(So, $\BCH(X)/N$ is the ring
$\CH(X)$
modulo the sum of the ideal of the torsion elements and the ideal of the
norms.)

\begin{prop}
\label{1.9}
The components of positive codimension of the ring $\BCH(X)/N$ are
trivial.
\end{prop}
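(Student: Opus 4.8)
The plan is to deduce Proposition~\ref{1.9} from the already established description of the complete flag variety, by using the projection $Y\to X$ that remembers only the maximal member of a flag; here $Y$ is the variety of complete isotropic flags of $h$ from Corollary~\ref{corE/B}.

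First I would record that this projection factors as a tower $Y=Y_m\to\dots\to Y_1\to Y_0=X$ in which each step $Y_j\to Y_{j-1}$ is, over $Y_{j-1}$, a quadratic Weil transfer of a projective bundle of the kind studied in Section~\ref{Weil transfer of projective bundles} — refining a maximal isotropic subspace to a complete isotropic flag is exactly such a chain, as in the construction at the end of the proof of Corollary~\ref{corE/B}. Iterating Corollary~\ref{corWeil} along the tower, $\BCH(Y)/N$ becomes a free module over $A:=\BCH(X)/N$ with a homogeneous basis given by the monomials in the codimension-$2$ classes $c_{(1)},\dots,c_{(m)}$ (the classes ``$c$'' of Section~\ref{Weil transfer of projective bundles} for the successive steps, lifted to $Y$), with $1$ the only basis element in codimension $0$ and all the others of positive even codimension; moreover the relation introduced at the $j$-th step expresses $c_{(j)}^{\rho_j}$ through strictly lower powers of $c_{(j)}$, with coefficients the images in $\BCH^{2\bullet}(Y_{j-1})/N$ of the Chern classes of the transferred bundle. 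Reading off the coefficient of $1$ then exhibits $A^d$ as a direct summand of $\BCH^d(Y)/N$ for every $d$; in particular $\BCH^1(Y)/N=A^1$ and $\BCH^2(Y)/N=A^2\oplus\bigoplus_j\Z\,c_{(j)}$ (here $A^0=\Z$ since $X$ is geometrically irreducible).

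Now I would feed in Corollary~\ref{corE/B}: $\BCH(Y)/N$ is generated as a ring by its codimension-$1$ and codimension-$2$ parts. Combining this with the module description, a minimality argument should finish the proof. If $A$ is nonzero in some positive codimension, pick $d$ minimal with $A^d\ne 0$. If $d\ge 3$, then $A^1=A^2=0$, so $\BCH(Y)/N$ is generated as a ring by the $c_{(j)}$ alone and is therefore concentrated in even codimensions, while $\BCH^d(Y)/N$ has $A^d$ as a summand — this already rules out odd $d$, and for even $d\ge 4$ one has to check that the relation-coefficients occurring in the reductions (images of Chern classes of the successive tautological bundles, which are built from line bundles and from the tautological subbundle of $X$) contribute nothing to the coefficient of $1$ in codimension $d$ beyond what minimality already controls; so the only cases left are $d=1$ and $d=2$. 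For those I would compute directly through Proposition~\ref{E/B}: the codimension-$\le2$ part of $\BCH(E/B)/N$ is the image of the corresponding part of $\CH_T(\Spec F)$ modulo norms, which by Corollaries~\ref{cor1} and \ref{cor2} and the elementary Lemma~\ref{S} has trivial codimension-$1$ component and has codimension-$2$ component freely spanned by the products $a_ib_i$ — whose images are exactly the tower classes $c_{(j)}$; in the even-dimensional case a possible codimension-$1$ class from the $\Gm$-factor dies because for the generic hermitian form the associated Tits algebra (the discriminant quaternion algebra) is nontrivial, so that class already lies in the norm ideal. Hence $A^1=A^2=0$, and the minimality argument gives $A^d=0$ for all $d\ge1$.

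The hard part will be the low-codimensional input — verifying that $\BCH(Y)/N$ has no class in codimension $1$ and nothing beyond the $c_{(j)}$ in codimension $2$ — together with the accompanying bookkeeping that the Chern-class coefficients in the tower relations stay within the subring generated by the $c_{(j)}$ and by $A^{\le2}$. This is precisely where one must invoke Proposition~\ref{E/B}, the explicit generators of Lemma~\ref{S}, and the genericity of $h$ (through the non-triviality of the discriminant algebra); everything upstream of it is formal manipulation of the tower and of Corollary~\ref{corWeil}.
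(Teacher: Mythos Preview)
Your overall strategy --- pass from the complete flag variety $Y$ to $X$ along the tower of Weil-transferred projective bundles and use the freeness of Corollary~\ref{corWeil} --- is the same as the paper's. But your ``minimality'' repackaging has a genuine gap that the paper's inductive formulation is designed to avoid.

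The problem is the relation coefficients. At each step $Y_j\to Y_{j-1}$ the relation $\sum_{i=0}^{\rho_j} c_i\,c_{(j)}^{\rho_j-i}=0$ has $c_i=c_{2i}(\cT_{l})$ for the appropriate tautological bundle $\cT_l$ on $Y_{j-1}$. As you note, these unwind to expressions in the $c_{(1)},\dots,c_{(j-1)}$ \emph{and} the Chern classes of the tautological bundle $\cT_r$ on $X$ itself. The latter are elements of $A$ in positive codimension, and nothing in your argument shows they vanish. So when you reduce a $\Z$-monomial in the $c_{(j)}$ to the free $A$-basis, the coefficient of $1$ picks up contributions from $c_{2i}(\cT_r)\in A^{2i}$; for even $d\ge 4$ your minimality hypothesis $A^{<d}=0$ kills the lower ones but says nothing about $c_d(\cT_r)\in A^d$. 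The argument is circular at exactly this point. The paper closes this by carrying the Chern classes of $\cT_l$ explicitly as generators in the inductive hypothesis, and then invoking \cite[Proposition~3.9]{isouni} at $l=r$ to kill the Chern classes of $\cT_r$ in $\CH(G/P)/N$. You never call on this external input, and without it (or an equivalent) the proof does not close.

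The same omission already bites at $d=2$. Count: for odd $n$ the torus gives $r$ codimension-$2$ generators $a_ib_i$, but the tower $Y\to X$ has only $r-1$ steps, hence only $r-1$ classes $c_{(j)}$ (and likewise for even $n$ once you include $h^2$). So your claimed identification ``images of the $a_ib_i$ are exactly the $c_{(j)}$'' cannot be correct; there is one surplus generator, and its $A$-component is precisely (the image of) $c_2(\cT_r)\in A^2$. Showing $A^2=0$ therefore already requires the triviality of $\CH^2(G/P)/N$ from \cite[Proposition~3.9]{isouni}. Similarly, for codim~$1$ in the even case the paper uses Lemma~\ref{6.2} (non-triviality of $\disc h$) to conclude $\BCH^1(X)/N=0$; your claim that the $\Gm$-class ``already lies in the norm ideal'' on the flag variety is not the same statement and is not justified.
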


\begin{proof}
Let $I=[1,\;r]=\{1,2,\dots,r\}$.
For every $J\subset I$ we consider the variety $X_J$ of flags of totally
isotropic subspaces in $h$ of dimensions given by $J$.

By induction on $l\in I$, we prove the following statement:
the ring $\BCH(X_{[l,\;r]})/N$ (where $N$ here is the image of norm homomorphism for $X_{[l,\;r]}$)
is generated by its elements of codimensions $1$,
its elements of codimension $2$, and the Chern classes of the tautological rank $2l$ vector bundle
$\cT_l$ on $X_{[l,\;r]}$.
Note that this statement for $l=r$ gives the statement of Proposition \ref{1.9}
because $X_{\{r\}}=X$ and we have the following triviality statements for the generators:
according to \cite[Proposition 3.9]{isouni} the elements of codimension $2$ as well as the Chern
classes of positive codimension of the tautological vector bundle on $G/P$ are trivial in
$\CH(G/P)/N$;
for odd $n$, again by \cite[Proposition 3.9]{isouni}, the elements of codimension $1$
are also trivial in $\CH(G/P)/N$;
finally, if $n$ is even, the elements of codimension $1$ in $\CH(X)$
become
trivial in $\CH(G/P)/N$ by Lemma \ref{6.2} below (because the discriminant of our hermitian form $h$ is
non-trivial).

The induction base $l=1$
is given by Corollary \ref{corE/B}.
Now, assuming that $l\geq2$, let us do the passage from $l-1$ to $l$.

The projection $X_{[l-1,\;r]}\to X_{[l,\;r]}$ is
the Weil transfer of a projective bundle
(namely, of the projective bundle over $(X_{[l,\;r]})_K$
given by the dual of (any)one of two rank $l$ tautological vector bundles
on $(X_{[l,\;r]})_K$).
Therefore, by Corollary \ref{corWeil}, the
$\BCH(X_{[l,\;r]})/N$-algebra $\BCH(X_{[l-1,\;r]})/N$ is generated by
certain
codimension $2$ element $c$ subject to one relation
$\Sum_{i=0}^lc_ic^{l-i}=0$, where $c_i:=c_{2i}(\cT_l)$
(see Remark \ref{c2i}).
In particular, the
$\BCH(X_{[l,\;r]})/N$-module $\BCH(X_{[l-1,\;r]})/N$ is free of rank $l$.

Now let $C\subset\BCH(X_{[l,\;r]})/N$ be the subring
generated by all $c_i$ together with the elements of codimensions $1,2$.
The coefficients of the above relation are then in $C$.
Therefore the subring of $\BCH(X_{[l-1,\;r]})/N$ generated
by $C$ and $c$ is also free (now as a $C$-module) of rank $l$.
On the other hand, this subring coincides with the total ring by the
induction hypothesis.
Indeed, it contains all the elements of codimension $1,2$ in
$\BCH(X_{[l-1,\;r]})/N$ because any such element is a polynomial in $c$
with coefficients of codimension $\leq2$ in $\BCH(X_{[l,\;r]})/N$.
It also contains the Chern classes of the tautological rank $2(l-1)$
vector bundle $\cT_{l-1}$ on $X_{[l-1,\;r]}$ because $c=c_2(\cT_l/\cT_{l-1})$ so that
the Chern classes of $\cT_{l-1}$ are expressible in terms of
$c_i$ and $c$
(note that the odd Chern classes of $\cT_l$ and of $\cT_l/\cT_{l-1}$ are trivial in
$\BCH(X_{[l-1,\;r]\;K})/N\supset\BCH(X_{[l-1,\;r]})/N$).

It follows that $C=\BCH(X_{[l,\;r]})/N$.
\end{proof}

We terminate this section by a study of the case of $n=2r$.
We do not assume anymore that our $n$-dimensional hermitian form $h$ is
generic, it is arbitrary.
We still write $X$ for the variety of $r$-dimensional totally isotropic subspaces
in $h$.
The variety $X_K$ is identified with the $K$-grassmannian of $r$-planes in
$V$ (the vector space of definition of $h$).
Let $\cE$ be the tautological bundle on the grassmannian.
For any $i\geq 0$, we set $c_i:=c_i(\cE)\in\CH^i(X_K)$.
The ring $\CH(X_K)$ is generated by the elements $c_i$.
In particular, $\CH^1(X_K)$ is (an infinite cyclic group) generated by $c_1$.
We recall that $\BCH(X)$ stands for the reduced Chow group and coincides with the image of $\CH(X)\to\CH(X_K)$.

The {\em discriminant} $\disc h$ of the hermitian form $h$
is the class in $F^\times/N(K^\times)$ of the signed determinant
$$
(-1)^{n(n-1)/2}\cdot\det(h(e_i,e_j))_{i,j\in[1,\;n]}
$$
of the Gram matrix of $h$ in a basis $e_1,\dots,e_n$ of $V$.

\begin{lemma}
\label{6.2}
The group $\BCH^1(X)$ is generated by
\begin{itemize}
\item[]
$c_1$, if $\disc h$ is trivial;
\item[]
$2c_1$, if $\disc h$ is non-trivial.
\end{itemize}
\end{lemma}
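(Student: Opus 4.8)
The plan is to compute $\BCH^1(X)=\Im\bigl(\CH^1(X)\to\CH^1(X_K)\bigr)$ as a subgroup of $\CH^1(X_K)=\Pic(X_K)=\Z c_1$ by Galois descent, the answer being controlled by the vanishing of a $2$-torsion class in $\Br(F)$ which will turn out to be the class of the quaternion algebra $(\disc h,K/F)$. First I would bracket the answer. The inclusion $\BCH^1(X)\subseteq\Z c_1$ is the definition. For the reverse, note that $X$ carries a tautological rank $2r$ vector bundle $\widetilde\cE$, namely the $\cO_X$-module underlying the universal totally isotropic $\cO_X\otimes_FK$-submodule of $V\otimes_F\cO_X$; over $X_K$ the algebra $\cO_{X_K}\otimes_FK$ splits as a product of two copies of $\cO_{X_K}$, so $\widetilde\cE_{X_K}$ decomposes correspondingly as $\cE\oplus(V/\cE)^\vee$ (one summand being the universal subspace $\cE$, the other its orthogonal complement, which has rank $r$ and is therefore $(V/\cE)^\vee$). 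Since $\det V$ is trivial, $\det\widetilde\cE_{X_K}=\det(\cE)\otimes\det(V/\cE)^{-1}=\det(\cE)^{\otimes2}$, so the class of the line bundle $\det\widetilde\cE\in\CH^1(X)$ restricts to $2c_1$. Hence $2\Z c_1\subseteq\BCH^1(X)\subseteq\Z c_1$, and $\BCH^1(X)=\Z c_1$ exactly when $c_1$ itself lifts to $\CH^1(X)$.

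Next I would use the descent exact sequence $0\to\Pic(X)\to\Pic(X_{\Fsep})^{\Gamma}\xrightarrow{\,\delta\,}\Br(F)$, the injectivity coming from Hilbert~90 since the global units on $X_{\Fsep}$ are constant ($X$ being projective and geometrically integral). As $X_{\Fsep}$ is a Grassmannian, $\Pic(X_{\Fsep})=\Z c_1$, and the $\Gamma$-action on it is trivial: it factors through $\Gal(K/F)$, whose nontrivial element acts by the duality automorphism $\cE\mapsto(V/\cE)^\vee$, which fixes the ample generator because $c_1\bigl((V/\cE)^\vee\bigr)=c_1(\cE)$; in any case the action cannot be by $-1$, since $\Pic(X)$ already contains $2\Z c_1\neq0$. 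Thus $\Pic(X_{\Fsep})^{\Gamma}=\Z c_1=\CH^1(X_K)$, the sequence identifies $\BCH^1(X)=\Pic(X)$ with $\ker\delta$, and the question reduces to whether $\delta(c_1)=0$ in $\Br(F)$.

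Finally I would identify $\delta(c_1)$ with a Tits algebra. Here $X=G/P$ with $P$ the maximal parabolic attached to the central node $r$ of the Dynkin diagram of type $\cA_{n-1}$ (this node is $\Gamma$-fixed precisely because $n=2r$, which is why $X$ is defined over $F$), so $\Pic(X_{\Fsep})$ is the line $\Z\omega_r$, and by Tits' description of the Picard group of a twisted flag variety $\delta$ carries $\omega_r$ to the class $[A_{\omega_r}]$ of the Tits algebra at node $r$ of the simply connected cover of $G$ — a $2$-torsion class, so the sign ambiguity between $\omega_r$ and $c_1$ is harmless. For the unitary group of $h$ this Tits algebra is the discriminant algebra of $\End_KV$ with the adjoint involution of $h$ (see \cite[\S10.B, \S27]{MR1632779}), and since $\End_KV$ is split its Brauer class is that of the quaternion algebra $(\disc h,K/F)$, which is trivial if and only if $\disc h$ is trivial. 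Combining the three steps gives the lemma. As a sanity check, for hyperbolic $h$ one has $\disc h$ trivial and $X(F)\neq\emptyset$, forcing $\delta=0$, consistently with the conclusion.

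The step I expect to require genuine care is the last one: applying Tits' formula for $\Pic(G/P)$ with the correct normalization, and above all identifying the node-$r$ Tits algebra of the unitary group with the discriminant algebra and computing that algebra's Brauer class when the underlying algebra is split. This is the only point at which the invariant $\disc h$ actually enters the argument, and hence where the content of the lemma lies.
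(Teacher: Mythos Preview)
Your proof is correct and follows essentially the same route as the paper's. Both arguments reduce the question to a Brauer-group obstruction and identify that obstruction with the class of the discriminant algebra $(K/F,\disc h)$: the paper does this by invoking the isomorphism $\coker\bigl(\CH^1(X)\to\CH^1(X_K)\bigr)\cong\ker\bigl(\Br(F)\to\Br(F(X))\bigr)$ and citing \cite{MR1628279} for the computation of that kernel, while you use the equivalent descent sequence $0\to\Pic(X)\to\Pic(X_{\Fsep})^{\Gamma}\xrightarrow{\delta}\Br(F)$ and identify $\delta(c_1)$ via the Tits-algebra formalism of \cite{MR1632779}. Your explicit bracketing step (constructing $\widetilde\cE$ with $c_1(\det\widetilde\cE)\mapsto 2c_1$) is a pleasant concrete verification but is logically redundant once one knows the obstruction is $2$-torsion.
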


\begin{proof}
The cokernel of $\CH^1(X)\to\CH^1(X_K)$ is isomorphic to the kernel of
$\Br(F)\to\Br(F(X))$.
This kernel is generated by the class of the {\em
discriminant algebra} of $h$, \cite{MR1628279}.
The discriminant algebra of $h$ is the quaternion algebra given by the
quadratic extension $K/F$, the non-trivial automorphism of $K/F$ and
$\disc h$.
Its Brauer class is always killed by $2$ and is non-trivial if and only if
$\disc h$ is non-trivial.
\end{proof}

\section
{Essential motives of unitary grassmannians}
\label{Isotropic grassmannians}

We fix the following notation.
Let $K/F$ be a separable quadratic field extension.
Let $n$ be an integer $\geq0$.
Let $V$ be a vector space over $K$ of dimension $n$.
Let $h$ be a $K/F$-hermitian form on $V$.
For any integer $r$,
let $X_r$ be the $F$-variety
of $r$-dimensional totally isotropic subspaces in $V$
($X_r$ is a closed subvariety of the Weil transfer with respect to $K/F$ of
the $K$-grassmannian of $r$-planes in $V$; $X_0=\Spec F$; $X_r=\emptyset$ for $r$ outside of the interval
$[0,\;n/2]$).

We are working with the Grothendieck Chow motives with coefficients in
$\F_2$, \cite[Chapter XII]{EKM}.
We write $M(X)$ for the motive of a smooth projective $F$-variety $X$.
We are constantly (and without any further reference) using the
Krull-Schmidt principle for the motives of quasi-homogeneous varieties,
\cite[Corollary 2.2]{outer}.

\begin{lemma}[{\cite[Theorem 15.8]{MR1758562}}]
\label{iso-whole}
Assume that the hermitian form $h$ is isotropic:
$n$ is $\geq2$ and $h\simeq\HH\bot h'$, where
$\HH$ is the hyperbolic plane, $h'$ a hermitian form of dimension $n-2$.
For any integer $r$
one has
$$
M(X_r)\simeq M(X'_{r-1})\oplus M(X'_r)(i)\oplus M(X'_{r-1})(j)\oplus M,
$$
where $X'_{r-1}$ and $X'_r$ are the varieties of $h'$,
$i=(\dim X_r-\dim X'_r)/2$, $j=\dim X_r-\dim X'_{r-1}$,
and $M$ is a sum of shifts of the $F$-motive of $\Spec K$.
\end{lemma}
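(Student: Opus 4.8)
The plan is to decompose the projection that sends an $r$-dimensional totally isotropic subspace $W\subset V$ to its intersection with the hyperbolic plane $\HH$ (equivalently, to study the incidence of $W$ with the two isotropic lines of $\HH_K$). Write $\HH=\langle u,v\rangle$ with $u,v$ isotropic, $h(u,v)=1$, so that $V=\HH\bot V'$ with $h'$ the restriction to $V'$. For a totally isotropic $W$ of dimension $r$, stratify $X_r$ by the quantity $\dim(W\cap(\HH^{\perp})) = \dim(W\cap(Ku\bot V'))$, or equivalently by whether $W$ is contained in $u^{\perp}$ and whether $u\in W$. This is exactly the classical analysis used for orthogonal grassmannians (and is why the statement is attributed to \cite[Theorem 15.8]{MR1758562}); first I would set up the three relevant locally closed strata: (i) the open stratum where $W$ meets $\HH$ trivially and its projection to $V'$ is totally isotropic of dimension $r$ — here $W$ is the graph of a map into $Ku+Kv$ over an $r$-dimensional totally isotropic subspace of $V'$, giving a vector-bundle (hence motivically trivial, affine-fibration) map to $X'_r$; (ii) the stratum where $u\in W$, so $W/Ku$ is a totally isotropic $(r-1)$-subspace of $u^{\perp}/Ku\cong V'$, again an affine fibration over $X'_{r-1}$; (iii) the stratum where $W\subset u^{\perp}$ but $u\notin W$ — then $W$ maps isomorphically to an $(r-1)$-dimensional totally isotropic subspace of $V'$ but retains the datum of a point in a $\PP(\ldots)$-type fiber, and this is where the Weil-transfer-of-a-projective-bundle geometry of Section~\ref{Weil transfer of projective bundles} enters and produces, after passing to motives, a copy of $M(X'_{r-1})(j)$ together with the "error" summand $M$ built from shifts of $M(\Spec K)$.

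The key technical device is the motivic decomposition coming from a filtration by closed subvarieties: if $X$ is smooth projective with a closed smooth $Z\subset X$ of pure codimension $c$ whose complement $U$ is an affine bundle (or more generally a cellular-type fibration) over a base $Y$, then in the category of Chow motives with $\F_2$-coefficients one gets $M(X)\simeq M(Y)\oplus M(Z')(\text{shift})$ for appropriate $Z'$, provided the relevant cycles defining the projectors are defined over $F$. So the steps are: realize each stratum's closure, check smoothness of the strata and compute their (co)dimensions to pin down the shifts $i=(\dim X_r-\dim X'_r)/2$ and $j=\dim X_r-\dim X'_{r-1}$, verify that the associated idempotent correspondences are rational (defined over $F$, not merely over $K$), and assemble the pieces. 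For the arithmetic of the shifts I would just compute $\dim X_r$ for the grassmannian of totally isotropic $r$-subspaces of an $n$-dimensional hermitian space over $K/F$ — it is $2$ times the $K$-dimension of the corresponding $K$-grassmannian of isotropic subspaces, i.e. essentially $2\bigl(r(n-r) - \binom{r+1}{2}\bigr)$ adjusted by parity of $n$ — and compare with the analogous dimension for $h'$ of dimension $n-2$; the differences collapse to the stated $i$ and $j$ after a short computation, which I will not grind through here.

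The main obstacle, and the only place where the argument is genuinely more subtle than in the orthogonal (split-$K$) case, is the third stratum: over $F$ the variety $\HH_K$ has its two isotropic lines interchanged by $\Gal(K/F)$, so the locus "$W\subset u^{\perp}$, $u\notin W$" is not an $F$-variety on its own — only the pair of loci coming from $u$ and from $v$ is $\Gal(K/F)$-stable. This is precisely why one cannot expect the neat summand $M(X'_{r-1})(j)$ to come out cleanly and why an extra summand $M$, a sum of shifts of $M(\Spec K)$, is forced in. Concretely I would analyze the Weil transfer $\WR_{K/F}$ of the tautological projective bundle attached to the isotropic line, use the computation of its Chow motive as a sum of shifts of $M(\Spec K')$ for intermediate $K'$ (here $K'\in\{F,K\}$) as recalled in the excerpt before Proposition~\ref{B}, and peel off the $F$-rational part $M(X'_{r-1})(j)$ from the $\Spec K$-part. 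Once the projectors are checked to be rational and orthogonal, the Krull–Schmidt principle for motives of quasi-homogeneous varieties (\cite[Corollary 2.2]{outer}) guarantees the decomposition is as stated, with $M$ absorbing exactly the non-$F$-rational shifts. The bookkeeping — that each of (i), (ii) contributes one of $M(X'_{r-1})$, $M(X'_r)(i)$ and the two halves of (iii) contribute $M(X'_{r-1})(j)$ plus shifts of $M(\Spec K)$ — is routine once the strata and the rationality are in hand.
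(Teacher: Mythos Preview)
The paper does not prove this lemma; it simply cites \cite[Theorem 15.8]{MR1758562}, whose method is a relative cellular (Bialynicki--Birula type) decomposition. Your general strategy is thus in the right spirit, but the specific stratification you propose is flawed. First, your three strata do not cover $X_r$: a totally isotropic $W$ meeting $\HH$ in an isotropic $K$-line other than $Ku$ (there is a whole $\PP^1$ of such lines, not two) lies in none of (i), (ii), (iii). Second, the bases are misidentified: for $W\subset u^\perp=Ku\oplus V'$ with $Ku\not\subset W$, the projection $W\to V'$ is injective with totally $h'$-isotropic image of dimension $r$ (since $h(au+w',au+w')=h'(w',w')$), so your stratum (iii) is an affine bundle over $X'_r$, not over $X'_{r-1}$. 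Conversely, when $W\cap\HH=0$ the image of $W$ in $V'$ need \emph{not} be totally isotropic (for $w=au+bv+w'$ one has $h'(w',w')=-(a\bar b+b\bar a)$), so (i) does not fiber over $X'_r$ either.

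Most seriously, your diagnosis of the $M(\Spec K)$ summands rests on a misconception. The isotropic $K$-lines $Ku$ and $Kv$ of $\HH$ are honest $F$-rational points; the loci ``$Ku\subset W$'', ``$W\subset u^\perp$'', etc.\ are closed $F$-subvarieties of $X_r$ with no Galois-descent issue whatsoever, and $\Gal(K/F)$ does not interchange $Ku$ and $Kv$ in any relevant sense. The $\Spec K$ summands have a different origin. A clean way to see them is via the split $\Gm$-action $u\mapsto tu$, $v\mapsto t^{-1}v$, $V'$ fixed: over $K$, where $(X_r)_K$ is a (partial) flag variety of $\GL_n$ and the Galois involution sends $W_+$ to $W_+^{\perp_h}$, this involution exchanges certain components of the $\Gm$-fixed locus (for instance, when $r=n/2$ it swaps the component ``$\HH\subset W_+$'' with the component ``$W_+\subset V'$''). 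The resulting $F$-component of $(X_r)^{\Gm}$ then has motive equal to a sum of shifts of $M(\Spec K)$, and its Bialynicki--Birula cell contributes the summand $M$. The Galois-stable fixed components give the $X'_{r-1}$ and $X'_r$ summands.
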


The following Corollary is also a consequence of a general result of
\cite{outer} or of \cite{MR2110630}:

\begin{cor}
If $h$ is {\em split} (meaning {\em hyperbolic} for even $n$ or {\em ``almost hyperbolic''} for odd $n$),
then $M(X_r)$ is a sum of shifts of the motives of $\Spec
F$ and of $\Spec K$.
\qed
\end{cor}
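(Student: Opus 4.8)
The plan is to deduce the Corollary from Lemma~\ref{iso-whole} by induction on $n$, using the isotropy of a split hermitian form at every stage. First I would treat the base case. If $n=0$ then $X_0=\Spec F$ and $X_r=\emptyset$ for $r\neq0$, so there is nothing to prove; if $n=1$ then $h=\<\lambda\>$ for some $\lambda\in F^\times$ (after scaling we may take $h=\<1\>$), the only nonempty variety is $X_0=\Spec F$, and again the statement is immediate. So I would set up the induction assuming $n\geq2$ and that the Corollary holds for all split hermitian forms of dimension $<n$.

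For the inductive step, note that a split hermitian form of dimension $n\geq2$ is isotropic: by definition ``split'' means hyperbolic for even $n$ (so $h\simeq\HH\bot h'$ with $h'$ hyperbolic of dimension $n-2$) or ``almost hyperbolic'' for odd $n$ (so $h\simeq\HH\bot h'$ with $h'$ almost hyperbolic of dimension $n-2$); in either case $h'$ is again split of dimension $n-2$. Thus Lemma~\ref{iso-whole} applies: for every integer $r$ we have
$$
M(X_r)\simeq M(X'_{r-1})\oplus M(X'_r)(i)\oplus M(X'_{r-1})(j)\oplus M,
$$
where $X'_{r-1}$, $X'_r$ are the varieties attached to $h'$, the shifts $i,j$ are as in the Lemma, and $M$ is a sum of shifts of $M(\Spec K)$. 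By the induction hypothesis applied to $h'$ (of dimension $n-2<n$), each of $M(X'_{r-1})$ and $M(X'_r)$ is a sum of shifts of $M(\Spec F)$ and $M(\Spec K)$. Substituting and regrouping, $M(X_r)$ is again a sum of shifts of $M(\Spec F)$ and $M(\Spec K)$, which completes the induction and hence the proof.

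The only point requiring a little care — and the place I would expect to be the main obstacle, modest as it is — is the bookkeeping ensuring that ``split'' is inherited by $h'$ and that the induction is correctly indexed across parities: the parity of $n$ is preserved under passing to $h'$, so the even (hyperbolic) case and the odd (almost hyperbolic) case descend to themselves, and one does not need to interleave the two. Everything else is a formal consequence of Lemma~\ref{iso-whole} together with the Krull--Schmidt principle for motives of quasi-homogeneous varieties, which we are using without further comment; no genuinely new input is needed, and indeed (as the remark preceding the Corollary indicates) the statement also follows directly from the general structural results of \cite{outer} or \cite{MR2110630}.
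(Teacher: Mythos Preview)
Your proof is correct and takes essentially the same approach as the paper: the Corollary is marked with \qed\ and is meant to follow immediately from Lemma~\ref{iso-whole} by the obvious induction on $n$ (or alternatively from the cited general results of \cite{outer} or \cite{MR2110630}), exactly as you have written it out. One small remark: the Krull--Schmidt principle is not actually needed anywhere in this argument, since you are only substituting decompositions, not asserting any uniqueness.
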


\begin{cor}
There is a decomposition $M(X_r)\simeq M_r\oplus M$ such that the motive $M$ is a sum
of shifts of $M(\Spec K)$ and for any field extension $L/F$ with split
$h_L$ the motive $M_r$ is {\em split} (meaning {\em is a sum of Tate motives}).
\end{cor}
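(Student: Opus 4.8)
The plan is to build the decomposition by induction on the dimension $n$ of $h$, using Lemma \ref{iso-whole} as the engine. Fix $r$ and let me separate two cases depending on whether $h$ is isotropic.

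If $h$ is anisotropic, then first I would like to argue that $X_r$ is either empty (if $r$ is outside $[0,n/2]$) or has no totally isotropic subspaces over $F$ other than the ones forced; the point here is that for anisotropic $h$ the only interesting case is $r=0$, where $X_0=\Spec F$ and one takes $M_0=M(\Spec F)$ and $M=0$. Actually it is cleaner to phrase the base of the induction as: $n\leq1$, where $h$ is automatically anisotropic, $X_r$ is $\Spec F$ or empty, and the claim is trivial with $M=0$. For the inductive step with $h$ anisotropic of dimension $n\geq2$, the variety $X_r$ need not be a point, but over any $L/F$ splitting $h_L$ we may instead write the decomposition using a splitting field and descend; more precisely, I would handle the anisotropic case together with the isotropic one by a single device: pass to $\bar F$-motives never, but rather use that it suffices to produce \emph{one} decomposition $M(X_r)\simeq M_r\oplus M$ over $F$ with $M$ a sum of shifts of $M(\Spec K)$ and $M_r$ split by every splitting field of $h$, and this is a property that can be checked after a faithfully flat base change that does not split $h$ — in particular after any odd-degree extension or after $F\hookrightarrow F(X_r)$ is not available, so instead I rely on Krull--Schmidt over $F$ directly.

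The main work is the isotropic inductive step. Suppose $h\simeq\HH\bot h'$ with $\dim h'=n-2$, and assume the corollary is known for $h'$ (and all indices), giving $M(X'_s)\simeq M'_s\oplus M'$ with $M'$ a sum of shifts of $M(\Spec K)$ and $M'_s$ split by any splitting field of $h'$ — note a splitting field of $h$ is the same as a splitting field of $h'$ since $\HH$ is already split. Plug the decompositions of $M(X'_{r-1})$, $M(X'_r)$, $M(X'_{r-1})$ into the formula of Lemma \ref{iso-whole}:
$$
M(X_r)\simeq \bigl(M'_{r-1}\oplus M'_r(i)\oplus M'_{r-1}(j)\bigr)\ \oplus\ \bigl(M'\oplus M'(i)\oplus M'(j)\oplus M\bigr).
$$
Set $M_r:=M'_{r-1}\oplus M'_r(i)\oplus M'_{r-1}(j)$ and let $\tilde M$ denote the second bracketed summand; since $M$ and each copy of $M'$ is a sum of shifts of $M(\Spec K)$, so is $\tilde M$. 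A direct sum of split motives is split, and each summand of $M_r$ becomes split over any field splitting $h'$ hence $h$, so $M_r$ has the required property. This proves existence of \emph{a} decomposition; the asserted decomposition $M(X_r)\simeq M_r\oplus M$ with the stated uniqueness-type properties then follows from the Krull--Schmidt principle \cite[Corollary 2.2]{outer}, which guarantees that the "split part" and the "$M(\Spec K)$-part" are well defined up to isomorphism independently of how the induction is organized.

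The step I expect to be the genuine obstacle is \textbf{well-definedness}: a priori the construction above depends on the chosen hyperbolic splitting $h\simeq\HH\bot h'$, and different splittings could conceivably produce non-isomorphic $M_r$. The resolution is that $M_r$ is characterized intrinsically — it is the complement, in $M(X_r)$, of the maximal summand that is a sum of shifts of $M(\Spec K)$ — and Krull--Schmidt makes this complement unique up to isomorphism; one then checks that $M_r$ so defined is split over every splitting field $L/F$ by base-changing the displayed isomorphism to $L$ (where $M(\Spec K)_L$ is a sum of two Tate motives when $K_L$ splits and is itself indecomposable otherwise, so the complement $(M_r)_L$ absorbs exactly the non-Tate part). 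A secondary point to be careful about is the anisotropic case feeding the induction: there $X_r$ can still be nontrivial (e.g. $r=0$ gives $\Spec F$, which \emph{is} split, fine), and one must confirm the induction is actually anchored only at $n\leq1$ where every $X_r$ is $\Spec F$ or $\emptyset$ — isotropic $h$ reduces $n$ by $2$ via Lemma \ref{iso-whole}, and anisotropic $h$ of dimension $\geq2$ does not arise as a case needing its own argument because the statement is vacuous for $r\notin[0,n/2]$ and for $r$ in range we may replace $h$ by $h_L$ over a splitting extension only to \emph{verify} the property, not to \emph{construct} $M_r$, which is always done over $F$ by the complement description. I would write the proof so that the construction is "take the non-$M(\Spec K)$ part of $M(X_r)$", prove it is split over splitting fields by the single application of Lemma \ref{iso-whole} after reducing $h$ to the split case through a chain of hyperbolic splittings, and invoke Krull--Schmidt for all the well-definedness.
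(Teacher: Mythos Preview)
Your induction via Lemma \ref{iso-whole} handles the isotropic case, but the anisotropic case with $n\geq2$ is a genuine gap, and your ``take the non-$M(\Spec K)$ part'' manoeuvre does not close it. Concretely: define $M_r$ over $F$ as the complement of the maximal sum of shifts of $M(\Spec K)$ in $M(X_r)$; Krull--Schmidt makes this well defined. Now let $L/F$ split $h$ but not $K$. Over $L$ you know (from the split corollary) that $M(X_r)_L$ is a sum of Tate motives and shifts of $M(\Spec K_L)$. But why is $(M_r)_L$ the Tate part? Base change can expose \emph{new} $M(\Spec K_L)$-summands that were not visible over $F$: nothing you have written rules out that an indecomposable summand of $M_r$ over $F$ becomes a shift of $M(\Spec K_L)$ over $L$. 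Your induction cannot reach this case because Lemma \ref{iso-whole} requires $h$ to be isotropic over the ground field, and passing to $L$ to ``reduce $h$ to the split case through a chain of hyperbolic splittings'' only tells you about the decomposition over $L$, not about how it compares with the one over $F$.

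This descent step is exactly the content the paper imports from \cite[Proposition 4.1]{unitary}: one passes to $E:=F(X_{\lfloor n/2\rfloor})$, where $h_E$ is split and $K_E$ is still a field, and that proposition guarantees that every shift of $M(\Spec K_E)$ appearing in $M(X_r)_E$ already descends to a shift of $M(\Spec K)$ in $M(X_r)$ over $F$ (the key hypothesis being $X_{\lfloor n/2\rfloor}(K)\ne\emptyset$). Once all such summands are extracted over $F$, the remainder $M_r$ is split over $E$, hence over any splitting field. Without this or an equivalent going-down argument, your proof does not go through for anisotropic $h$.
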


\begin{proof}
Apply \cite[Proposition 4.1]{unitary} inductively to $E:=F(X_{\lfloor n/2\rfloor})$ and $S:=\Spec
K$.
Note that the variety $S_E$ is still irreducible and has indecomposable
motive because $K\otimes_FE$ is a field.
The hermitian form $h_E$ is split.
Since $X_{\lfloor n/2\rfloor}(K)\ne\emptyset$,
the sum $M$ of all copies of shifts of $M(\Spec K)$ present in the complete decomposition
of $M(X_r)$ over $E$, can be extracted from $M(X_r)$ over $F$.
The remaining part $M_r$ of the motive of $X_r$ has the desired
property.
\end{proof}

\begin{rem}
\label{rem7.4}
The reduced Chow group (homological or cohomological one) of the motive $M$
(as a subgroup of $\BCh(X_r)$)
is the
image $N$ of the norm map $\BCh((X_r)_K)\to\BCh(X_r)$
(it is evidently contained in $N$ and coincides in fact with $N$ because $N$ intersects the reduced Chow group of $M_r$ trivially).
Therefore the reduced Chow group of $M_r$ is identified with the quotient
$\BCh(X_r)/N$.
Here $\Ch(X_r)$ stands for the Chow group with coefficients in $\F_2$ of
the variety $X_r$ and the reduced Chow group $\BCh(X_r)$ is $\BCH(X_r)$ modulo $2$ or, equivalently,
the image of $\Ch(X_r)\to\Ch(\bar{X}_r)$.
In the quasi-split case (the case where $h$ is hyperbolic or almost
hyperbolic), the reduced Chow group in the above statements can be
replaced by the usual Chow group (still modulo $2$).
We refer to \cite[\S64]{EKM} for the definition of homological and
cohomological Chow group of a motive.
The coincidence of descriptions of homological and cohomological Chow groups for the
motives $M$ and $M_r$ is explained by their symmetry: $M\simeq M^*(\dim X_r)$
(and the same for $M_r$), where $M^*$ is the {\em dual} motive,
\cite[\S65]{EKM}.
\end{rem}

\begin{dfn}
\label{def-essential}
The motive $M_r$ (defined by $X_r$ uniquely up to an isomorphism) will be called the {\em
essential motive} of $X_r$ (or the {\em essential part} of the motive of $X_r$).
\end{dfn}

It follows that
the decomposition of the essential motive in the isotropic case has
precisely the same shape as the decomposition of the motive of an
isotropic orthogonal grassmannian \cite{gog}:

\begin{cor}
\label{iso-essential}
Under the hypotheses of Lemma \ref{iso-whole},
one has
$$
M_r\simeq M'_{r-1}\oplus M'_r(i)\oplus M'_{r-1}(j),
$$
where $M'_{r-1}$ and $M'_r$ are the essential motives of $X'_{r-1}$ and $X'_r$,
$i=(\dim X_r-\dim X'_r)/2$, $j=\dim X_r-\dim X'_{r-1}$.
\qed
\end{cor}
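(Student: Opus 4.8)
The statement to be proved, Corollary \ref{iso-essential}, is obtained by combining the motivic decomposition of Lemma \ref{iso-whole} with the definition of the essential motive (Definition \ref{def-essential}) and the norm-ideal bookkeeping of Remark \ref{rem7.4}. The approach is to start from the decomposition
$$
M(X_r)\simeq M(X'_{r-1})\oplus M(X'_r)(i)\oplus M(X'_{r-1})(j)\oplus M,
$$
of Lemma \ref{iso-whole}, where $M$ is a sum of shifts of $M(\Spec K)$, and then to further decompose each of the three summands $M(X'_{r-1})$, $M(X'_r)(i)$, $M(X'_{r-1})(j)$ according to the corollary preceding Remark \ref{rem7.4}: $M(X'_{r-1})\simeq M'_{r-1}\oplus M''$ and $M(X'_r)\simeq M'_r\oplus M'''$ with $M''$, $M'''$ sums of shifts of $M(\Spec K)$. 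Substituting, one gets
$$
M(X_r)\simeq \bigl(M'_{r-1}\oplus M'_r(i)\oplus M'_{r-1}(j)\bigr)\oplus\bigl(M''\oplus M'''(i)\oplus M''(j)\oplus M\bigr),
$$
and the second bracket is again a sum of shifts of $M(\Spec K)$. So the task reduces to showing that the first bracket is (a motive isomorphic to) the essential motive $M_r$ of $X_r$ — i.e. that the extraction of all $M(\Spec K)$-shifts from $M(X_r)$ yields exactly $M'_{r-1}\oplus M'_r(i)\oplus M'_{r-1}(j)$.

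First I would invoke the Krull–Schmidt principle (\cite[Corollary 2.2]{outer}, which the paper says is in force throughout the section): the motive $M_r$ is characterized, up to isomorphism, as the complement of the maximal sub-sum of $M(\Spec K)$-shifts inside $M(X_r)$, equivalently as the sum of all indecomposable summands of $M(X_r)$ that are \emph{not} shifts of $M(\Spec K)$. By the same token, each $M'_{r-1}$, $M'_r$ has no $M(\Spec K)$-shift as a summand. Hence in the displayed decomposition above the first bracket contains no $M(\Spec K)$-shift and the second bracket is entirely made of them; by uniqueness of the Krull–Schmidt decomposition the first bracket must coincide with $M_r$. This is the core of the argument, and it is essentially a formal manipulation once Lemma \ref{iso-whole} and the ``corollary with $M_r$'' are granted.

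The one point that needs a little care — and the step I would flag as the main (minor) obstacle — is verifying that an indecomposable summand of $M'_{r-1}$ or $M'_r$ can never be a shift of $M(\Spec K)$, so that the separation of the two brackets is genuinely a Krull–Schmidt separation and not just a regrouping. This is exactly the defining property built into $M'_{r-1}$ and $M'_r$ by the corollary preceding Remark \ref{rem7.4}: over a field extension $L/F$ splitting the respective hermitian form the motive $M'_\bullet$ becomes split (a sum of Tate motives), whereas $M(\Spec K)$ stays indecomposable and non-Tate over any extension in which $K$ remains a field; since $h'$ and $h$ are split by a common extension $L$ over which $K$ can be kept a field (adjoin a generic splitting variety point while avoiding $K\otimes_F L$ splitting), no indecomposable summand of $M'_\bullet$ is a shift of $M(\Spec K)$. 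With that in hand the index identities $i=(\dim X_r-\dim X'_r)/2$ and $j=\dim X_r-\dim X'_{r-1}$ are simply inherited verbatim from Lemma \ref{iso-whole}, since the shifts applied to $M(X'_r)$ and $M(X'_{r-1})$ restrict to the same shifts on the summands $M'_r$ and $M'_{r-1}$. So the proof is short: decompose, regroup, and cite Krull–Schmidt.
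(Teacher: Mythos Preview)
Your proposal is correct and is precisely the unpacking of the ``immediate'' argument the paper has in mind (the statement carries only a \qed): decompose each $M(X'_\bullet)$ into its essential part plus a sum of shifts of $M(\Spec K)$, regroup, and invoke Krull--Schmidt together with the fact that over $L=F(X_{\lfloor n/2\rfloor})$ the essential motives split while $K\otimes_FL$ remains a field. Your identification of the one delicate point---that no indecomposable summand of $M'_{r-1}$ or $M'_r$ is a shift of $M(\Spec K)$---and your resolution of it are exactly right.
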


According to the general result of \cite{outer}, any summand of the
complete motivic decomposition of the variety $X_r$ is a shift of the
upper motive $U(X_s)$ for some $s\geq r$ or a shift of the motive of the $F$-variety
$\Spec K$.
Therefore we get

\begin{cor}
\label{any is U}
Any summand of the complete decomposition of the essential motive $M_r$ is
a shift of the
upper motive $U(X_s)$ for some $s\geq r$.
\qed
\end{cor}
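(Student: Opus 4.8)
The plan is to read the statement off from the structure theorem of \cite{outer} together with the way the essential motive was isolated. By \cite{outer}, in the complete (Krull--Schmidt) decomposition of $M(X_r)$ with $\F_2$-coefficients every indecomposable summand is either a shift of the upper motive $U(X_s)$ for some $s\geq r$, or a shift of $M(\Spec K)$; this is exactly the dichotomy recalled just before the statement. On the other hand, in the decomposition $M(X_r)\simeq M_r\oplus M$ produced in the Corollary preceding Definition~\ref{def-essential}, the summand $M$ was by construction the sum of \emph{all} shifts of $M(\Spec K)$ occurring in that complete decomposition. So the plan is simply: invoke the Krull--Schmidt principle \cite[Corollary 2.2]{outer} to conclude that the complete decomposition of $M_r$ is obtained from that of $M(X_r)$ by deleting precisely the shifts of $M(\Spec K)$, hence consists only of shifts of $U(X_s)$ with $s\geq r$.

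To make this airtight I would first check that $M_r$ indeed contains no shift of $M(\Spec K)$: over the field $E:=F(X_{\lfloor n/2\rfloor})$ used in the construction, $K\otimes_FE$ is a field, so $M(\Spec K)_E=M(\Spec(K\otimes_FE))$ is indecomposable, and any hypothetical summand $M(\Spec K)(i)$ of $M_r$ would base-change to a summand of $M(X_r)_E$ not contained in $M_E$ — contradicting that $M_E$ absorbs all such summands. Secondly I would verify that the two families in the \cite{outer} dichotomy are genuinely disjoint, so that ``deleting the shifts of $M(\Spec K)$'' leaves exactly the $U(X_s)$'s: again over $E$ the form $h$ is split, so for $r\leq s\leq n/2$ the variety $X_s$ has an $E$-point and $U((X_s)_E)$ is the Tate motive $M(\Spec E)$, which is split, whereas $M(\Spec(K\otimes_FE))$ is indecomposable and not a Tate motive; hence no shift of $U(X_s)$ equals a shift of $M(\Spec K)$.

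Combining these observations gives the Corollary. I do not expect a real obstacle here: the genuine content was already spent in building the decomposition $M(X_r)\simeq M_r\oplus M$ and in quoting \cite{outer}, and the disjointness of the two motive families is in fact implicit in the earlier discussion (cf. Remark~\ref{rem7.4}, where the reduced Chow group of $M$ — the norm subgroup $N$ — is seen to meet the reduced Chow group of $M_r$ trivially). The only mildly delicate point is the base-change/Krull--Schmidt bookkeeping needed to be sure that $M$ carries away \emph{all} copies of shifts of $M(\Spec K)$ already over $F$, which is precisely what the cited \cite[Proposition~4.1]{unitary} argument in the preceding Corollary guarantees.
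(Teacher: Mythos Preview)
Your proposal is correct and follows the same approach as the paper: the paper simply records the dichotomy from \cite{outer} in the sentence immediately preceding the corollary and then marks the statement with a \qed, treating it as an immediate consequence of the construction of $M_r$. Your additional checks (that $M_r$ contains no shift of $M(\Spec K)$ and that the two families are disjoint) are more explicit than what the paper writes out, but they are exactly the justifications implicit in the paper's ``Therefore we get'' and in the construction of $M$ via \cite[Proposition~4.1]{unitary}.
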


\begin{rem}
\label{dimMr}
A motive is {\em split} if it is isomorphic to a finite direct sum of Tate
motives.
A motive is {\em geometrically split} if it becomes split over an
extension of the base field.
{\em Dimension} $\dim P$ of a geometrically split motive $P$ is the
maximum of $|i-j|$ for $i$ and $j$ running over the integers such that the Tate motives $M(\Spec
L)(i)$ and $M(\Spec L)(j)$ are direct summands of $P_L$, where $L/F$ is a
field extension splitting $N$.

Since the field $F$ is algebraically closed in the function field $L:=F(X_{\lfloor
n/2\rfloor})$ and the motive of $(X_r)_L$ contains the Tate summands
$\F_2$ and $\F_2(\dim X_r)$, these Tate motives are summands of $(M_r)_L$.
It follows that $\dim M_r=\dim X_r$.
\end{rem}



\section
{Generic unitary grassmannians}
\label{Generic grassmannians}

In the statements below we use the notion of the {\em essential motive}
$M_r$ of the variety $X_r$, introduced in the previous section.
It turns out that in the generic case, this motive is indecomposable:

\begin{thm}
\label{main}
Let $h$ be a {\em generic} $K/F$-hermitian form of an arbitrary dimension
$n\geq0$.
For $r=0,1,\dots,\lfloor n/2\rfloor$,
the essential motive $M_r$ of the variety $X_r$ is indecomposable,
the variety
$X_r$ is $2$-incompressible.
\end{thm}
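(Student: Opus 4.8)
The plan is to prove Theorem \ref{main} by induction on the dimension $n$, following the strategy of \cite{gog} for orthogonal grassmannians, with the maximal grassmannian serving as the crucial new induction base. The two assertions — indecomposability of $M_r$ and $2$-incompressibility of $X_r$ — are closely linked: since every summand of the complete decomposition of $M_r$ is a shift $U(X_s)(i)$ with $s\geq r$ (Corollary \ref{any is U}), and since $U(X_r)$ (untwisted) always occurs, indecomposability of $M_r$ is equivalent to the statement that no proper positive shift appears, which in turn (via Remark \ref{rem7.4}, identifying $\BCh(M_r)$ with $\BCH(X_r)/N$ reduced mod $2$) is controlled by the vanishing of the positive-codimension part of $\BCH(X_r)/N$. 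For the maximal grassmannian $X_{\lfloor n/2\rfloor}$ this vanishing is exactly Proposition \ref{1.9}; I would first extract from it that $M_{\lfloor n/2\rfloor}$ is indecomposable, and note that indecomposability of the essential (hence of the upper) motive of a variety is equivalent to its $2$-incompressibility since $\dim M_{\lfloor n/2\rfloor}=\dim X_{\lfloor n/2\rfloor}$ by Remark \ref{dimMr}.

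For the inductive step down to smaller $r$, I would pass to the function field $L=F(X_{\lfloor n/2\rfloor})$ (or more economically to $F$ of an $L$-rational point), over which $h$ becomes isotropic, and apply the motivic decomposition of Corollary \ref{iso-essential}:
$$
(M_r)_L\simeq M'_{r-1}\oplus M'_r(i)\oplus M'_{r-1}(j),
$$
where $M'_\bullet$ are the essential motives of the grassmannians of the $(n-2)$-dimensional form $h'$, which is again generic (a generic hermitian form of dimension $n$ has a generic $(n-2)$-dimensional form sitting inside after splitting off one hyperbolic plane). By the induction hypothesis on $n$, the motives $M'_{r-1}$ and $M'_r$ are indecomposable. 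One then has to argue that $M_r$ itself is indecomposable over $F$: a non-trivial decomposition of $M_r$ over $F$ would, by Krull–Schmidt over $L$, have to be compatible with the three-term decomposition above, so each $F$-summand of $M_r$ would, over $L$, be a sub-sum of $\{M'_{r-1},M'_r(i),M'_{r-1}(j)\}$. The point is to rule this out using the structure of $\BCH(X_r)/N$: by Proposition \ref{1.9} together with the description of Chow groups in low codimension (Corollaries \ref{corE/B}, \ref{corWeil}), the reduced Chow group modulo norms of $X_r$ is concentrated appropriately so that $M_r$ cannot split off a proper shift of its upper motive. Equivalently, one checks that $U(X_r)$ is the unique upper summand, using that $U(X_r)_L$ must be the summand $M'_{r-1}$ (the one containing the canonical $\F_2$), and that the other two pieces $M'_r(i)$ and $M'_{r-1}(j)$ are, over $L$, shifts of upper motives of the larger grassmannians $X_s$, $s>r$, which already appear in $M_{r+1}$, etc., so by a downward induction on $r$ from the maximal case they are accounted for without forcing a split of $M_r$.

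Concretely I would organize the downward induction on $r$ (for fixed $n$) as follows: the base $r=\lfloor n/2\rfloor$ is handled by Proposition \ref{1.9} as above; assuming $M_s$ indecomposable for all $s>r$, one uses Corollary \ref{iso-essential} to see that the summands $M'_r(i)$ and $M'_{r-1}(j)$ of $(M_r)_L$ are precisely (shifts of) the essential motives appearing in the decompositions for $X'_r$ and $X'_{r-1}$, hence are shifts of upper motives $U((X_s)_L)$ with $s>r$ by the semicontinuity of upper motives under the field extension $L/F$ and Corollary \ref{any is U}; since over $F$ these upper motives $U(X_s)$ with $s>r$ are distinct indecomposable objects (by the inductive indecomposability of $M_s$), no $F$-summand of $M_r$ other than the one restricting to $M'_{r-1}\ni\F_2$ can exist, giving $M_r=U(X_r)$. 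Finally $2$-incompressibility of $X_r$ follows because an indecomposable essential motive forces $U(X_r)$ to have dimension $\dim X_r$ (Remark \ref{dimMr}), which is the defining property of $2$-incompressibility.

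The main obstacle I anticipate is the bookkeeping in the downward induction: one must carefully match the summands $M'_r(i)$, $M'_{r-1}(j)$ produced by the isotropic reduction over $L$ with upper motives of the original $F$-varieties $X_s$ ($s>r$) and verify that the field extension $L/F$ does not merge or split these motives unexpectedly — this is where the generic hypothesis is essential, since it guarantees (via the versality of the torsor and the computations of Sections \ref{Generic varieties of complete flags}–\ref{Generic maximal grassmannians}) that the reduced Chow group modulo norms is as small as possible and in particular that no "extra" rational cycles appear over $L$ that could disconnect $M_r$. In short, all the geometric content is front-loaded into Proposition \ref{1.9} for the maximal grassmannian; the remaining step is a motivic Krull–Schmidt argument of the exact shape carried out in \cite{gog}, transplanted verbatim to the unitary setting using Corollary \ref{iso-essential} in place of its orthogonal analogue.
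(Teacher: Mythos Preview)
Your overall architecture matches the paper's: Proposition \ref{1.9} handles the maximal grassmannian, then one runs a descending induction on $r$ in the style of \cite{gog} via the isotropic reduction of Corollary \ref{iso-essential}. But two concrete steps in your inductive step are wrong or missing. First, the field extension must be $L=F(X_1)$, not $L=F(X_{\lfloor n/2\rfloor})$: over the latter $h$ becomes \emph{split}, so $(M_r)_L$ is a sum of Tate motives and the three-term decomposition you write down is not available. Second, your parenthetical claim that the complement $h'$ obtained over $F(X_1)$ is ``again generic'' is not true as stated and is exactly the point that requires work. The paper resolves it by an auxiliary specialization: over the field $F'$ obtained by making $\langle t_{n-1},t_n\rangle$ isotropic one has $h_{F'}\simeq\HH\perp\langle t_1,\dots,t_{n-2}\rangle$, whose complement \emph{is} the generic $(n-2)$-dimensional form, so the induction on $n$ applies there; since $F'(X_1)/F'$ is purely transcendental and $L\subset F'(X_1)$, the complete decomposition of $(M_r)_L$ has at most three summands, hence the three summands of Corollary \ref{iso-essential} are indecomposable.

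The remaining gap is the mechanism that actually forces indecomposability over $F$. Your appeal to ``the structure of $\BCH(X_r)/N$'' via Proposition \ref{1.9} is unavailable for $r<\lfloor n/2\rfloor$ (that proposition is proved only for the maximal grassmannian), and your Krull--Schmidt bookkeeping does not by itself exclude a non-trivial $F$-summand. The paper's argument is sharper: by duality (the outer summands $M'_{r-1}$ and $M'_{r-1}(j)$ are exchanged), a hypothetical proper $F$-summand $P$ of $M_r$ must satisfy $P_L\simeq M'_r(i)\simeq U((X_{r+1})_L)(i)$; an analogue of \cite[Lemma 1.2]{gog} then gives $P\simeq U(X_{r+1})(i)$, and since $U(X_{r+1})=M_{r+1}$ by the descending induction hypothesis one gets $\dim X_{r+1}=\dim M'_r=\dim X'_r$, contradicting the explicit computation $\dim X_{r+1}-\dim X'_r=2n-2r-3>0$. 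This duality-plus-dimension-count is the heart of the step and is absent from your sketch.
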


\begin{proof}
We induct on $n$ in the proof of the first statement.
The induction base is the trivial case of $n<2$.
Now we assume that $n\geq2$.

We do a descending induction on $r$.
The case of the maximal $r=\lfloor n/2\lfloor$ is an immediate consequence of Proposition
\ref{1.9}.
Indeed, one summand of the complete decomposition of the motive $M_r$ for such $r$
is the upper motive $U(X_r)$ of $X_r$.
The remaining summands (if any) are positive shifts $U(X_r)(i)$ ($i>0$) of the upper
motive, see Corollary \ref{any is U}.
But if we have a summand $U(X_r)(i)$, then the reduced Chow group
$\BCh^i(M_r)$ is non-zero.
However by Remark \ref{rem7.4}, $\BCh(M_r)$ is isomorphic to $\BCH(X_r)/N$ which is $0$ in positive codimensions
by Proposition \ref{1.9}.

Now we assume that $r<\lfloor n/2\rfloor$.
Since the case of $r=0$ is trivial, we may assume that $r\geq1$
and $n\geq4$.

Let $L:=F(X_1)$.
We have $h_L\simeq\HH\bot h'$, where $h'$ is a $K(X_1)/F(X_1)$-hermitian form
of dimension $n-2$ and $\HH$ is the hyperbolic plane.

For any integer $s$, we write $X'_s$ for the variety $X_s$ of the hermitian form
$h'$, and we write
$M'_s$ for the
essential motive of the variety $X'_s$.
By Corollary \ref{iso-essential},
the motive $(M_r)_L$ decomposes in a sum of three summands:
\begin{equation}
\label{tri}
(M_r)_L\simeq M'_{r-1}\oplus M'_r(i)
\oplus M'_{r-1}(j),
\end{equation}
where $i:=(\dim X_r-\dim X'_r)/2$ and $j:=\dim X_r-\dim X'_{r-1}$.
Let us check that each of three summands of decomposition (\ref{tri}) is indecomposable.

Let $F'$ be the function field of the variety of $1$-dimensional totally
isotropic subspaces of the $K/F$-hermitian form $\<t_{n-1},t_n\>$.
Then $h_{F'}\simeq \HH\bot\<t_1,\dots,t_{n-2}\>$ so that we have a motivic
decomposition similar to (\ref{tri}) where each of the three summands is
indecomposable by the induction hypothesis.
Since the field extension $F'(X_1)/F'$ is purely transcendental, the
complete decomposition of $(M_r)_{F'(X_1)}$ has only three summands.
Since $L=F(X_1)\subset F'(X_1)$, the complete decomposition of $(M_r)_L$
has at most three summands so that the summands of
decomposition (\ref{tri}) are indecomposable.

It follows
(taking into account the duality like in \cite[Remark 1.3]{gog}, see also
\cite[Proposition 2.4]{sgog})
that if the motive $M_r$ is decomposable (over $F$), then
it has a summand $P$ with $P_L\simeq M'_r(i)=U(X'_r)(i)$.
Note that $U(X'_r)\simeq U((X_{r+1})_L)$.
By an analogue of \cite[Lemma 1.2]{gog} (see also
\cite[Proposition 2.4]{sgog}), $P\simeq U(X_{r+1})(i)$, showing that
$U(X_{r+1})_L\simeq M'_r$.
By the induction hypothesis, the motive $M_{r+1}$ is indecomposable, that
is, $U(X_{r+1})=M_{r+1}$.
Therefore we have an isomorphism $(M_{r+1})_L\simeq M'_r$ and,
in particular, $\dim X_{r+1}=\dim X'_r$ (see Remark \ref{dimMr}).
However
$\dim X_{r+1}= (r+1)\big(2n-3(r+1)\big)$,
$\dim X'_r=r\big(2(n-2)-3r\big)$, and the difference is
$2n-2r-3\geq n-3>0$ (recall that $n\geq4$ now).

To show that $X_r$ is $2$-incompressible, we show that its canonical
$2$-dimension $\cd_2 X_r$ equals $\dim X_r$.
By \cite[Theorem 5.1]{canondim}, $\cd_2 X_r=\dim U(X_r)$.
By the first part of Theorem \ref{main}, $U(X_r)=M_r$.
Finally, $\dim M_r=\dim X_r$ (see Remark \ref{dimMr}).
\end{proof}


\section
{Connection with quadratic forms}
\label{Applications to quadratic forms}

Let $K/F$ be a separable quadratic field extension extension, $V$ a
finite-dimensional vector space over $K$, $h$ a non-degenerate $K/F$-hermitian form on
$V$.
For any $v\in V$ the value $h(v,v)$ is in $F$ and
the map $q:V\to F$, $v\mapsto h(v,v)$ is a non-degenerate quadratic form on $V$ considered as a vector
space over $F$.
Note that the dimension of $q$ is even.
The Witt indices of $h$ and $q$ are related as follows:

\begin{lemma}
\label{i2i}
$i(q)=2i(h)$.
\end{lemma}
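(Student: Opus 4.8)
The statement $i(q)=2i(h)$ relates the Witt index of a $K/F$-hermitian form $h$ to that of the associated quadratic form $q(v)=h(v,v)$. The plan is to prove two inequalities, $i(q)\geq 2i(h)$ and $i(q)\leq 2i(h)$, by translating between totally isotropic subspaces for the two forms.

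\emph{The easy inequality.} First I would observe that if $W\subset V$ is a totally isotropic $K$-subspace for $h$ of dimension $i(h)$ over $K$, then $W$, viewed as an $F$-subspace of $V$, is totally isotropic for $q$: indeed, for $w\in W$ we have $q(w)=h(w,w)=0$ since $W$ is $h$-isotropic. Moreover $\dim_F W=2\dim_K W=2i(h)$. Hence $i(q)\geq 2i(h)$.

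\emph{The harder inequality.} For the reverse inequality I need to go from a totally isotropic $F$-subspace $U$ for $q$ back to a totally isotropic $K$-subspace for $h$. The key point is that $q$ is not merely a quadratic form: because $q(v)=h(v,v)$ and $h$ is $K$-sesquilinear, the bilinear form $b_q(x,y)=q(x+y)-q(x)-q(y)=h(x,y)+h(y,x)=\operatorname{Tr}_{K/F}h(x,y)$ is the transfer of the hermitian form. The subtle issue is that an $F$-isotropic subspace $U$ for $q$ need not be a $K$-subspace. The plan is: given a totally $q$-isotropic $U$ of dimension $2i(q)$ over $F$ (so $\dim_F U = i(q)$... more precisely $i(q)=\dim_F U/?$ — here $i(q)$ is the Witt index, so $\dim_F U=i(q)$ if we follow the convention that the Witt index is the dimension of a maximal totally isotropic subspace, but since $\dim q$ is even we should be careful; in any case pick $U$ maximal), consider the $K$-span $KU=U+\lambda U$ where $K=F(\lambda)$. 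On $U$ itself, $q$ vanishes and $b_q$ vanishes (as $U$ is totally isotropic), so $\operatorname{Tr}_{K/F}h(x,y)=0$ for all $x,y\in U$. One then wants to show that a maximal such $U$ can be chosen to be $\lambda$-stable, i.e. a $K$-subspace, and on a $K$-subspace the vanishing of $\operatorname{Tr}_{K/F}h|_U$ forces $h|_U=0$ (since $h(x,y)$ and $h(x,\lambda y)=\lambda h(x,y)$ would both have zero trace, and the trace form on $K$ is non-degenerate). This would give a totally $h$-isotropic $K$-subspace of dimension $i(q)/2$... so one gets $i(h)\geq i(q)/2$, i.e. $i(q)\leq 2i(h)$, provided $i(q)$ is even, which follows from the easy inequality combined with parity considerations or is established as part of the argument.

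\emph{Main obstacle.} The hard part is showing that a \emph{maximal} totally $q$-isotropic $F$-subspace can be taken $K$-stable (equivalently, proving $i(q)$ is even and that the Witt decomposition of $q$ refines a Witt decomposition of $h$). I expect the cleanest route is an induction on $i(q)$: if $i(q)>0$, pick an $F$-isotropic vector $v$; then either $Kv$ is $h$-isotropic (and one splits off an $h$-hyperbolic plane, hence a $q$-hyperbolic $4$-dimensional piece, and induces on the orthogonal complement), or $h(v,v)\neq 0$ while $q(v)=h(v,v)=0$ — but that is contradictory, so $Kv$ is automatically $h$-isotropic whenever $v$ is $q$-isotropic and $h(v,v)=q(v)=0$ forces $h(v,v)=0$; the only remaining subtlety is that $v,\lambda v$ might not span an $h$-hyperbolic plane, which is handled by choosing a complementary vector using non-degeneracy of $h$. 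Then $h\simeq \mathbb{H}\bot h'$ and $q\simeq q_{\mathbb{H}}\bot q'$ where $q_{\mathbb H}$ is the $4$-dimensional quadratic form of the hermitian hyperbolic plane, which is hyperbolic (it is the transfer, hence $\langle 1,1\rangle\otimes N_{K/F}$ evaluated appropriately — in fact $q_{\mathbb H}\simeq 2\mathbb{H}$ over $F$); by induction $i(q')=2i(h')$, and adding back $i(q)=2+i(q')=2+2i(h')=2i(h)$.
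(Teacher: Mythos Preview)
Your proposal is correct and, after the initial detour through $K$-spans and trace forms, arrives at essentially the same inductive argument as the paper: a $q$-isotropic vector $v$ satisfies $h(v,v)=q(v)=0$, so one splits off a hermitian hyperbolic plane (hence two $q$-hyperbolic planes) and inducts. The paper's only cosmetic improvement is to phrase the reverse inequality as ``$i(q)\geq 2r-1\Rightarrow i(h)\geq r$'' and induct on $r$, which sidesteps having to argue separately that $i(q)$ is even.
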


\begin{proof}
For any integer $r\geq0$, the inequality $i(h)\geq r$ implies $i(q)\geq2r$.
Indeed,
if $i(h)\geq r$, $V$ contains a totally $h$-isotropic $L$-subspace $W$ of
dimension $r$.
This $W$ is also totally $q$-isotropic and has dimension $2r$ over $F$.
Therefore $i(q)\geq2r$.

To finish we prove by induction on $r\geq0$ that $i(q)\geq2r-1$ implies $i(h)\geq r$.
This is trivial for $r=0$.
If $r>0$ and $i(q)\geq2r-1$, then $q$ is isotropic.
But any $q$-isotropic vector is also $h$ isotropic, therefore the $K$-vector
space $V$ decomposes in a direct sum of $h$-orthogonal subspaces $V=U\oplus V'$
such that $h|_U$ is a hyperbolic plane.
The subspaces $U$ and $V'$ are also $q$-orthogonal and $q|_U$ is
hyperbolic (of dimension $4$).
For $h':=h|_{V'}$ and $q':=q|_{V'}$
it follows that $i(h')=i(h)-1$ and $i(q')=i(q)-2$, and we are done by the
induction hypothesis applied to $h'$ (of course, $q'$ is the quadratic form given by $h'$).
\end{proof}

\begin{cor}[N. Jacobson, 1940]
\label{h i q}
If the quadratic forms $q_1$ and $q_2$ corresponding to $K/F$-hermitian forms
$h_1$ and $h_2$ are isomorphic, then $h_1$ and $h_2$ are also isomorphic.
\end{cor}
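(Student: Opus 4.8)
The plan is to prove the Corollary by induction on the common $K$-dimension $n$ of $h_1$ and $h_2$. First I would record that these dimensions do agree: since $q_i$ is $h_i$ regarded over $F$, one has $\dim_F q_i = 2\dim_K h_i$, and $q_1\simeq q_2$ forces $\dim_K h_1=\dim_K h_2=:n$. The base case $n=0$ is vacuous, so assume $n\geq1$.

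For the inductive step I would first diagonalize $h_1$ (using \cite[Theorem 6.3 of Chapter 7]{MR770063}, already invoked before Corollary \ref{corE/B}), writing $h_1\simeq\langle a\rangle\bot h_1'$ with $a\in F^\times$ and $h_1'$ a hermitian form of $K$-dimension $n-1$; let $q_1'$ denote the quadratic form attached to $h_1'$. The quadratic form $q_{\langle a\rangle}$ attached to the rank-one hermitian form $\langle a\rangle$ is $a$ times the norm form of $K/F$, which represents $1$; hence $q_{\langle a\rangle}$ represents $a$, and therefore so does $q_1\simeq q_{\langle a\rangle}\bot q_1'$, and so does $q_2\simeq q_1$. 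This means there is a vector $v$ with $h_2(v,v)=q_2(v)=a\neq0$; an anisotropic vector splits off a rank-one orthogonal summand with non-degenerate complement, so $h_2\simeq\langle a\rangle\bot h_2'$ with $h_2'$ of $K$-dimension $n-1$, say with quadratic form $q_2'$. Now $q_{\langle a\rangle}\bot q_1'\simeq q_1\simeq q_2\simeq q_{\langle a\rangle}\bot q_2'$, and Witt cancellation for non-degenerate quadratic forms (\cite{EKM}) gives $q_1'\simeq q_2'$. Since $h_1'$ and $h_2'$ have the same $K$-dimension $n-1$ and isomorphic associated quadratic forms, the induction hypothesis yields $h_1'\simeq h_2'$, and hence $h_1\simeq\langle a\rangle\bot h_1'\simeq\langle a\rangle\bot h_2'\simeq h_2$.

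The main obstacle will be the step that propagates the rank-one summand $\langle a\rangle$ from $h_1$ to $h_2$: one cannot first reduce to the isotropic case and peel off hyperbolic planes, so the argument has to exploit instead that the norm form of $K/F$ represents $1$, which makes the scalar $a$ a value of $q_1$, hence of $q_2$, hence of $h_2$. Everything else — diagonalizability of hermitian forms, splitting off an anisotropic line, and Witt cancellation for non-degenerate quadratic forms — is standard and valid in every characteristic, as is needed here. If one prefers, the induction can be avoided by using the just-proved Lemma \ref{i2i}: the $2n$-dimensional hermitian form $h_1\bot(-h_2)$ has associated quadratic form $q_1\bot(-q_2)\simeq q_1\bot(-q_1)$, which is hyperbolic of Witt index $2n$, so by Lemma \ref{i2i} the Witt index of $h_1\bot(-h_2)$ equals $n$, i.e. $h_1\bot(-h_2)$ is hyperbolic; Witt cancellation for hermitian forms over a field then gives $h_1\simeq h_2$. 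That route is shorter but trades cancellation for quadratic forms against cancellation for hermitian forms.
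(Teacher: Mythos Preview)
Your proposal is correct, and in fact the alternative route you sketch at the end---apply Lemma~\ref{i2i} to $h_1\bot(-h_2)$ and then cancel---is exactly the paper's proof, verbatim in spirit. So you have independently reproduced the intended argument.

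Your primary argument, the induction on $n$ via diagonalization and Witt cancellation for quadratic forms, is a genuinely different route. It is self-contained in the sense that it does not invoke Lemma~\ref{i2i} at all, and it relies only on cancellation for \emph{quadratic} forms (which in \cite{EKM} is available in all characteristics for non-degenerate forms), whereas the paper's proof needs cancellation for \emph{hermitian} forms. The price is length: the paper dispatches the corollary in three lines once Lemma~\ref{i2i} is in hand, while your induction has to carry the representation-of-values step ($q_2$ represents $a$, hence $h_2$ splits off $\langle a\rangle$) through each stage. Both approaches are valid in arbitrary characteristic; the paper's is the more efficient packaging given that Lemma~\ref{i2i} has just been established precisely for this purpose.
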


\begin{proof}
The orthogonal sum $q_1\bot-q_2$ is the quadratic form corresponding to
$h_1\bot-h_2$.
If $q_1$ and $q_2$ are isomorphic, $q_1\bot-q_2$ is hyperbolic, therefore
$h_1\bot-h_2$ is also hyperbolic implying that $h_1$ and $h_2$ are
isomorphic.
\end{proof}


For any integer $r$,
let $X_r$ be the variety of totally $h$-isotropic $r$-dimensional $K$-subspaces in $V$
and let $Y_r$ be the
variety of totally $q$-isotropic $r$-dimensional $F$-subspaces in $V$.
The variety $Y_n$, where $n:=\dim V$, is not connected and has two
isomorphic connected components;
changing notation, we let $Y_n$ be one of its connected component in
this case.

\begin{cor}
\label{X i Y}
For any $r$, the upper motives of the varieties $X_r$, $Y_{2r}$, and
$Y_{2r-1}$ are isomorphic.
In particular, these varieties have the same canonical $2$-dimension.
\end{cor}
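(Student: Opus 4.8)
The plan is to deduce Corollary \ref{X i Y} from Lemma \ref{i2i} by translating the Witt-index relation into a statement about splitting fields, and then invoking the general theory of upper motives. Recall that for a projective homogeneous variety $Z$ over $F$, the upper motive $U(Z)$ (with $\F_2$-coefficients) is characterized, up to isomorphism, by the property that for a field extension $L/F$ the motive $U(Z)_L$ contains a Tate summand $\F_2$ if and only if $Z_L$ has a zero-cycle of odd degree (equivalently here, since these are homogeneous under connected groups, if and only if $Z$ acquires a rational point over an odd-degree extension of $L$); more precisely, by the results of \cite{outer}, two such varieties $Z$ and $Z'$ have isomorphic upper motives as soon as each of them has a zero-cycle of degree one over the function field of the other. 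So the strategy is: show that each of $X_r$, $Y_{2r}$, $Y_{2r-1}$ has a rational point over the function field of any other one.

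First I would record what a rational point on each variety means. A point of $X_r$ over $L/F$ is an $r$-dimensional totally $h$-isotropic $K_L$-subspace of $V_L$, i.e. it exists iff $i(h_L)\geq r$. A point of $Y_{2r}$ over $L$ exists iff $i(q_L)\geq 2r$, and a point of $Y_{2r-1}$ over $L$ exists iff $i(q_L)\geq 2r-1$. By Lemma \ref{i2i}, $i(q_L)=2i(h_L)$ for every field extension $L/F$ (the lemma is stated over the ground field but applies verbatim to $h_L$, $q_L$ since $K_L=K\otimes_F L$ is again a quadratic étale algebra, and when it is not a field all three varieties are split, so that case is trivial). Hence over any $L$ the three conditions $i(h_L)\geq r$, $i(q_L)\geq 2r$, and $i(q_L)\geq 2r-1$ are pairwise equivalent: indeed $i(q_L)$ is even, so $i(q_L)\geq 2r-1\iff i(q_L)\geq 2r\iff i(h_L)\geq r$. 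Thus $X_r$, $Y_{2r}$, $Y_{2r-1}$ have rational points over exactly the same field extensions of $F$.

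Next I would feed this into the motivic machinery. Apply the displayed observation with $L$ the function field of one of the three varieties: since that variety has a rational point over its own function field, the equivalence above gives that all three have rational points there, and in particular a zero-cycle of degree one. This holds symmetrically for the function field of each of the three. By \cite[Theorem/Corollary]{outer} (the same tool already used in Corollary \ref{any is U}), it follows that $U(X_r)\simeq U(Y_{2r})\simeq U(Y_{2r-1})$. Finally, the "in particular" clause follows from \cite[Theorem 5.1]{canondim}, which identifies the canonical $2$-dimension of a projective homogeneous variety $Z$ with $\dim U(Z)$; since the three upper motives are isomorphic, their dimensions, hence the three canonical $2$-dimensions, coincide.

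The main obstacle, such as it is, is bookkeeping rather than depth: one must handle the degenerate case where $K\otimes_F L$ fails to be a field (so $h_L$ and $q_L$ are split and all varieties have points), check that $Y_n$ being reducible for $n$ even does not cause trouble (we have already fixed $Y_n$ to be one connected component, and over a splitting field that component still acquires a rational point, which is all that the upper-motive criterion needs), and make sure the parity argument $i(q_L)$ even is legitimate — it is, because $q_L$ comes from a hermitian form over the quadratic algebra $K_L$, so its dimension and hence its Witt index behave as in Lemma \ref{i2i}. Once these routine points are dispatched, the isomorphism of upper motives is immediate from the cited results.
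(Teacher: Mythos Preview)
Your proposal is correct and follows essentially the same route as the paper: the paper also uses Lemma \ref{i2i} to see that each of the three varieties has a rational map to each of the others (equivalently, a rational point over the other's function field), then invokes the upper-motive criterion (the paper cites \cite[Corollary 2.15]{upper} rather than \cite{outer}) and \cite[Theorem 5.1]{canondim} for the canonical $2$-dimension. Your added remarks on the degenerate cases ($K\otimes_F L$ split, the component issue for $Y_n$) are careful but not needed for the argument as the paper presents it.
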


\begin{proof}
By Lemma \ref{i2i}, each of the three varieties possesses a rational map
to each other.
Therefore the upper motives are isomorphic by \cite[Corollary 2.15]{upper}.
For the statement on canonical dimension see \cite[Theorem 5.1]{canondim}.
\end{proof}

Let us recall that according to the original definition \cite[Definition 5.11(2)]{MR2148072}
due to A. Vishik
of the $J$-invariant $J(q)$ of a non-degenerate quadratic form $q$ over $F$ of
even dimension $2n$, $J(q)$ is a certain subset of the set of integers
$\{0,1,\dots,n-1\}$.
Unfortunately, in \cite[\S88]{EKM}, the name {\em $J$-invariant} and the notation $J(q)$
stand for the complement of the above subset
(with the ``excuse'' that this choice
simplifies several formulas involving the $J$-invariant).
In the present paper we are using the original definition and notation.

Theorem \ref{main} with Corollary \ref{X i Y} make it possible
to compute
for any $n$ the smallest (in the sense of inclusion) value of
the $J$-invariant
of a non-degenerate quadratic form
given by tensor product an $n$-dimensional bilinear form by a fixed binary
quadratic form.
Only the case of even $n$ is of interest, because
the $J$-invariant is $\{1,2,\dots,n-1\}$ (everything but $0$) for odd $n$.

\begin{cor}
\label{J(generic)}
For any even $n\geq2$,
the smallest value of $J$-invariant discussed right above
is
$
\{0,2,4,\dots,n-2\}
$
(the set of even integers from $0$ till $n-2$).
\end{cor}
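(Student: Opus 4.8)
I would prove Corollary \ref{J(generic)} by combining the $2$-incompressibility of generic unitary grassmannians (Theorem \ref{main}) with the translation to orthogonal grassmannians provided by Corollary \ref{X i Y}, and then with the known relationship between the $J$-invariant of a quadratic form and the canonical $2$-dimensions of its orthogonal grassmannians (the main theorem of \cite{MR2148072}, see also \cite[\S88]{EKM}). Fix an even $n\geq2$ and a separable quadratic field extension $K/F$. The quadratic forms under consideration are exactly the $n$-dimensional quadratic forms of the shape (bilinear form)$\otimes$(fixed binary form), i.e.\ up to scaling the forms $q$ arising from $K/F$-hermitian forms $h$ of dimension $n$ as in Section \ref{Applications to quadratic forms} (here $\dim_F q = 2n$, so in the notation of the $J$-invariant the relevant integer is $n$). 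The ``smallest value'' is attained at the \emph{generic} such form, because every member of the family is a specialization of the generic one and the $J$-invariant cannot grow under specialization (equivalently, canonical $2$-dimensions cannot grow).

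**Key steps, in order.** First I would recall the dictionary between $J(q)$ and canonical dimensions: writing $J(q)\subset\{0,1,\dots,n-1\}$, the integer $i$ lies in $J(q)$ precisely when the $(i{+}1)$st orthogonal grassmannian behaves ``non-split'' in the appropriate sense, and more precisely the canonical $2$-dimension $\cd_2 Y_r$ of the variety $Y_r$ of totally $q$-isotropic $r$-subspaces is computed by a weighted sum of the elements of $J(q)$ that are $<r$ (this is the content of Vishik's theorem as reformulated in \cite[Theorem 88.8]{EKM}). Second, by Corollary \ref{X i Y} we have $\cd_2 X_r = \cd_2 Y_{2r} = \cd_2 Y_{2r-1}$ for the unitary grassmannian $X_r$ of $h$. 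Third, for generic $h$, Theorem \ref{main} gives $\cd_2 X_r = \dim X_r = r(2n-3r)$ for every $r=0,1,\dots,n/2$. Fourth, I would feed these values back through the dictionary: the equalities $\cd_2 Y_{2r-1} = \cd_2 Y_{2r}$ force the odd integers to be absent from $J(q)$ (an odd $i=2r-1\in J(q)$ would make $\cd_2 Y_{2r} > \cd_2 Y_{2r-1}$), and the maximality $\cd_2 X_r = \dim X_r$ for all $r$ forces every \emph{even} integer $2r\in\{0,2,\dots,n-2\}$ to lie in $J(q)$. Combining, $J(q) = \{0,2,4,\dots,n-2\}$, and since this is by construction the smallest $J$-invariant occurring in the family, we are done. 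The final bookkeeping step is to check that the dimension count $\dim X_r = r(2n-3r)$ is exactly what the formula for $\cd_2 Y_{2r}$ produces when $J(q)$ equals the set of even integers $< n$ — a short arithmetic verification.

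**Main obstacle.** The delicate point is the precise form of the dictionary between $J(q)$ and $\cd_2 Y_r$, and in particular that the grassmannian relevant to the integer $i$ is $Y_{i+1}$ (with an index shift) and that one must be careful with the two connected components of $Y_n$ when $n$ is even — this is why the statement of Corollary \ref{X i Y} pairs $X_r$ with \emph{both} $Y_{2r}$ and $Y_{2r-1}$ and why one of the components of $Y_n$ was singled out just before that corollary. Getting the index conventions and the even/odd parity exactly right (recalling that \cite{EKM} uses the complementary convention for $J(q)$, as flagged in the excerpt) is the part that requires genuine care; everything else is a direct substitution of Theorem \ref{main} into the canonical-dimension formula. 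I would also note explicitly that for the generic $h$ the discriminant is non-trivial, so the whole range $r=0,\dots,n/2$ of grassmannians is non-degenerate and no low-codimension collapse occurs.
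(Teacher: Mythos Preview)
Your plan assembles the right ingredients (Theorem \ref{main}, Corollary \ref{X i Y}, and Vishik's $J$-invariant machinery), but it takes a more elaborate route than the paper and leaves the crucial step imprecise. The paper argues only with the \emph{maximal} grassmannian: it first invokes the a~priori inclusion $J(q)\subset\{0,2,\dots,n-2\}$ from \cite[Proposition 88.8]{EKM}, then uses the single formula $\cd_2 Y_n=\dim Y_n-\sum_{j\in J(q)} j$ of \cite[Theorem 90.3]{EKM} together with $\cd_2 Y_n=\cd_2 X_{n/2}=\dim X_{n/2}=n^2/4$ to force $\sum_{j\in J(q)}j=0+2+\dots+(n-2)$; combined with the inclusion this pins down $J(q)$ exactly. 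No information about $Y_r$ for $r<n$ is needed.

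Your approach instead tries to determine $J(q)$ element by element from the values $\cd_2 Y_r$ for \emph{all} $r$, via claims such as ``$\cd_2 Y_{2r-1}=\cd_2 Y_{2r}$ forces $2r-1\notin J(q)$'' and ``$\cd_2 X_r=\dim X_r$ forces $2r\in J(q)$''. These statements are plausible but require an explicit formula for $\cd_2 Y_r$ in terms of $J(q)$ for non-maximal $r$, which is considerably more delicate than the maximal case and which you do not state; the reference you give (\cite[Theorem 88.8]{EKM}) is to the structural proposition on $J(q)$, not to a canonical-dimension formula, and the clean identity $\cd_2=\dim-\sum J$ is special to $r=n$. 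So as written, the ``dictionary'' step is a genuine gap. If you want to keep your element-by-element strategy you would need to invoke, e.g., the description of the upper motives $U(Y_r)$ in terms of $J(q)$ and extract the precise numerical consequences; but it is both shorter and safer to follow the paper and use only $Y_n$ together with the upper bound from \cite[Proposition 88.8]{EKM}.
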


\begin{proof}
By \cite[Proposition 88.8]{EKM}, the $J$-invariant of any non-degenerate quadratic form given by
tensor product of an $n$-dimensional bilinear form by a binary quadratic form, is a subset of the
given set.
Let $K/F$ be the separable quadratic field extension given by the
discriminant of the fixed binary quadratic form.
We consider the generic $n$-dimensional $K/F$-hermitian form $h$ and
calculate the $J$-invariant of the associated quadratic form $q$.
We are using the above notation for the varieties associated to $h$ and to
$q$.

By \cite[Theorem 90.3]{EKM}, the canonical $2$-dimension of $Y_n$ is $\dim Y_n$ minus the sum
of the elements of the $J$-invariant.
On the other hand, by Corollary \ref{X i Y} and Theorem \ref{main}, the
canonical $2$-dimension of $Y_n$ is equal to the dimension of $X_{n/2}$ which
is
$$
\dim X_{n/2}=n^2/4=
n(n-1)/2-(0+2+\dots+(n-2)).
$$
Therefore the $J$-invariant is equal to the set indicated.
\end{proof}

Turning back to an arbitrary (not necessarily generic) hermitian form, we
have

\begin{prop}
\label{dvaMr}
For any $r>0$, the motive $M_r\oplus M_r(\dim Y_{2r-1}-\dim X_r)$ is isomorphic to a summand
of the motive of $Y_{2r-1}$.
If $n>2$, then the motive $M_r\oplus M_r(\dim Y_{2r}-\dim X_r)$ is isomorphic to
a summand of the motive of $Y_{2r}$.
(For $n=2$, the motive $M_r$ is isomorphic to
a summand of the motive of $Y_{2r}$.)
\end{prop}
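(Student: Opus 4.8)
The plan is first to reduce the doubled statement to the assertion that $M_r$ alone is a direct summand of $M(Y_m)$ for $m\in\{2r-1,2r\}$, and then to prove that assertion by induction on $n$. Granting that $M_r$ is a summand of $M(Y_m)$, the self-duality $M(Y_m)\simeq M(Y_m)^*(\dim Y_m)$ of the motive of a smooth projective variety, together with the self-duality $M_r\simeq M_r^*(\dim X_r)$ recorded in Remark \ref{rem7.4}, shows at once that $M_r(\dim Y_m-\dim X_r)$ is a summand of $M(Y_m)$ too; that the two copies together split off as $M_r\oplus M_r(\dim Y_m-\dim X_r)$ I would verify by the Krull--Schmidt principle, checking that for every indecomposable motive $Q$ the sum of the multiplicities of $Q$ in $M_r$ and in $M_r(\dim Y_m-\dim X_r)$ does not exceed its multiplicity in $M(Y_m)$. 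Here Corollary \ref{any is U} (every indecomposable summand of $M_r$ is a shift $U(X_s)(i)$, $s\ge r$, of dimension $\le\dim M_r=\dim X_r$, see Remark \ref{dimMr}) together with the comparison of $\dim Y_m$ and $\dim X_r$ does the bookkeeping; the borderline case is $\dim Y_{2r}=\dim X_r$, which is exactly $n=2$, and there indeed the two copies coincide, which is why only $M_r$ survives in that case.

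To prove that $M_r$ is a direct summand of $M(Y_m)$ I would induct on $n$, the cases $n\le 3$ being checked directly. For $n\ge4$ I would pass to $L:=F(X_1)$, over which $h\simeq\HH\bot h'$ with $h'$ of dimension $n-2$; write $q=q_\HH\bot q'$ with $q_\HH$ the $4$-dimensional hyperbolic form and $q'$ the quadratic form of $h'$, and exploit two isotropic decompositions. On the unitary side, Corollary \ref{iso-essential} gives $(M_r)_L\simeq M'_{r-1}\oplus M'_r(i)\oplus M'_{r-1}(j)$ in terms of the essential motives of the varieties of $h'$. On the orthogonal side, the standard hyperbolic-plane splitting of orthogonal grassmannians (the analogue for $q$ of Lemma \ref{iso-whole}; cf.\ \cite{gog}), applied twice so as to peel off the two hyperbolic planes of $q_\HH$, expresses $M(Y_m(q))_L$ as a direct sum of shifts of the motives $M(Y_{m'}(q'))$ with $m'\in\{m-2,m-1,m\}$. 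Applying the induction hypothesis to the $(n-2)$-dimensional form $h'$ to each such $M(Y_{m'}(q'))$ and matching the two decompositions term by term, one obtains over $L$ a summand of $M(Y_m(q))$ isomorphic to $\bigl(M_r\oplus M_r(\dim Y_m-\dim X_r)\bigr)_L$. The case of $Y_{2r}$ can alternatively be deduced from that of $Y_{2r-1}$ via the flag variety $Y_{\{2r-1,2r\}}$, which is a projective bundle over $Y_{2r}$ and a quadric bundle over $Y_{2r-1}$ admitting a rational section.

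The main obstacle is the descent from $L=F(X_1)$ back to $F$. I expect to carry it out exactly as in the proof of Theorem \ref{main}: by Corollary \ref{any is U} the indecomposable constituents of $M_r$ are shifts of the upper motives $U(X_s)$ with $s\ge r$, each already defined over $F$, and for such motives the standard descent results (Rost nilpotence, together with the device used in the proof of Theorem \ref{main} of passing through an auxiliary field $F'$ over which $F'(X_1)/F'$ is purely transcendental, so as to bound the number of summands of $(M_r)_{F(X_1)}$) let one conclude that the $L$-summand descends to $F$. The multiplicity count of the first paragraph then reassembles the two shifted copies into the single desired direct summand of $M(Y_m)$, with the lone exception $n=2$, $m=2r$.
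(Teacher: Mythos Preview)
Your approach differs substantially from the paper's and carries a genuine gap in the descent step.  The paper does not induct on $n$ at all.  It first treats the \emph{generic} hermitian form, where Theorem~\ref{main} gives $M_r=U(X_r)$ indecomposable; by Corollary~\ref{X i Y} this is $U(Y_{2r-1})\simeq U(Y_{2r})$, hence already an indecomposable summand of $M(Y_m)$, and duality produces a second, non-isomorphic (because the shift is checked to be positive) indecomposable summand $M_r(\dim Y_m-\dim X_r)$.  With both summands indecomposable and distinct, Krull--Schmidt gives $M_r\oplus M_r(\dim Y_m-\dim X_r)$ as a summand at once.  The arbitrary case is then obtained by \emph{specialization} from the generic one: the specialization maps on Chow groups respect composition of correspondences, so projectors specialize to projectors and isomorphisms of motives to isomorphisms of motives (Lemma~\ref{spec} is invoked to see that the essential motive specializes to the essential motive).

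The gap in your plan is the passage from $L=F(X_1)$ back to $F$.  The devices you borrow from the proof of Theorem~\ref{main} descend \emph{indecomposability}: they show that a hypothetical $F$-decomposition of $M_r$ would force too many summands over $L$.  What you need here is the opposite direction --- to descend the property ``$M_r$ is a direct summand of $M(Y_m)$'' --- and neither Rost nilpotence nor the auxiliary-field trick delivers that.  Knowing that $(M_r)_L$ sits as a summand of $M(Y_m)_L$ does not manufacture an $F$-correspondence splitting $M_r$ off $M(Y_m)$; yes, the indecomposable constituents $U(X_s)(i)$ of $M_r$ are defined over $F$, but you have no handle on their multiplicities inside $M(Y_m)$ over $F$.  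The same lacuna undermines the multiplicity bookkeeping of your first paragraph for non-generic $h$: once $M_r$ is allowed to be decomposable, nothing you cite prevents a piece $U(X_s)(i)$ from appearing in both $M_r$ and $M_r(\dim Y_m-\dim X_r)$, so the two copies need not split off together merely because each does separately.  The paper's generic-then-specialize argument bypasses both difficulties simultaneously, and it is hard to see how to close them without it.
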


\begin{proof}
We start by checking that the statement holds in the {\em generic} case.
In this case, by Theorem \ref{main}, the motive $M_r$ is equal to the upper motive
$U(X_r)$ which, by Corollary \ref{X i Y}, is isomorphic to
$U(Y_{2r-1})$ as well as to $U(Y_{2r})$, indecomposable summands of
$M(Y_{2r-1})$ and of $M(Y_{2r})$.
So, $M_r$ is an indecomposable summand of $M(Y_{2r-1})$ and $M(Y_{2r})$.
By duality \cite[\S65]{EKM}, $M_r(\dim Y_{2r-1}-\dim X_r)$ is also an indecomposable summand of
$M(Y_{2r-1})$ and
$M_r(\dim Y_{2r}-\dim X_r)$ is also an indecomposable summand of
$M(Y_{2r})$.
It remains to notice that
$$
\dim Y_{2r-1}-\dim X_r=(2r-1)(2n-3r+1)-r(2n-3r)>0
$$
for any $n$ and any $r\leq n/2$ and that
$$
\dim Y_{2r}-\dim X_r=r(4n-6r-1)-r(2n-3r)>0
$$
for any $n>2$ and any $r\leq n/2$.

In the {\em general} case, the given hermitian form $h$ is a specialization
of the generic one.
The varieties $X_r$, $Y_{2r-1}$, and $Y_{2r}$ are therefore
specializations of the corresponding varieties associated to the generic
form.
By \cite[Proposition 20.3 and Corollary 20.3]{Fulton}, the specialization
homomorphism on Chow groups respects multiplication and commutes with pull-backs and
push-forwards so that it also commutes with composition of
correspondences.
Therefore, specializing projectors we get projectors and specializing
isomorphisms of motives we get isomorphisms of motives.
To see that the specialization of the essential
motive is the essential motive, apply Lemma \ref{spec}.
\end{proof}

The following statement has been proved in \cite{MR2563697}
(the existence of such a decomposition (without the connection of $M_1$ to $X_1$) has been established
before \cite{MR2563697} in \cite[Theorem 5.1]{MR2066515}):

\begin{cor}
\label{kvadrika}
$M(Y_1)=M_1\oplus M_1(1)$.
\end{cor}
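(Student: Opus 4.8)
The plan is to read this off from Proposition~\ref{dvaMr}. Applying that proposition with $r=1$ and invoking the dimension identity from its proof, which for $r=1$ says $\dim Y_1-\dim X_1=(2\cdot1-1)(2n-3+1)-1\cdot(2n-3)=1$ (where $n:=\dim_K V$), one gets that $M_1\oplus M_1(1)$ is isomorphic to a direct summand of $M(Y_1)$, the motive of the $(2n-2)$-dimensional projective quadric of $q$. Hence it remains only to see that this summand exhausts $M(Y_1)$, equivalently that the complementary summand is zero.

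I would check this by a rank count over $\Fsep$. Write $M(Y_1)=M_1\oplus M_1(1)\oplus P$. Since $Y_1$ is a quadric of even dimension $2n-2$, its motive splits over $\Fsep$ into $2n$ Tate motives, so $\rk_{\Fsep}M(Y_1)=2n$. On the other hand $M(X_1)=M_1\oplus M$ with $M$ a sum of shifts of $M(\Spec K)$ (Remark~\ref{rem7.4}), whence $\rk_{\Fsep}M_1=\rk_{\Fsep}M(X_1)-\rk_{\Fsep}M$. Over $\Fsep$ the variety $X_1$ becomes the variety of flags ``a line inside a hyperplane'' in an $n$-dimensional vector space, which has $n(n-1)$ Schubert cells, and $\rk_{\Fsep}M$ is twice the number of two-element orbits of the $\Gal(K/F)$-action on these cells. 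Computing that number gives $\rk_{\Fsep}M_1=n$, hence $\rk_{\Fsep}P=2n-2n=0$ and $P=0$.

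The only non-formal ingredient is thus the equality $\rk_{\Fsep}M_1=n$; by the construction of $M_1$ it may be computed after replacing $h$ by a split form, where $X_1$ is the above flag variety and the nontrivial element of $\Gal(K/F)$ acts by $(\ell,H)\mapsto(H^{\perp},\ell^{\perp})$ for a fixed non-degenerate symmetric bilinear form, its fixed locus being the split quadric of dimension $n-2$ cut out by the isotropic lines (with $H=\ell^{\perp}$), which contributes $n$ Schubert cells. Two alternative ways to close the gap: invoke the decomposition $M(Y_1)\simeq N\oplus N(1)$ of \cite[Theorem 5.1]{MR2066515} and match it with the summand above via the Krull--Schmidt principle; or first treat generic $h$, where $M_1=U(X_1)\simeq U(Y_1)$ is indecomposable by Theorem~\ref{main} and Corollary~\ref{X i Y}, and then pass to arbitrary $h$ by the specialization argument from the proof of Proposition~\ref{dvaMr}. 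The main obstacle is the rank computation $\rk_{\Fsep}M_1=n$.
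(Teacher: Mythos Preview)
Your approach matches the paper's exactly: apply Proposition~\ref{dvaMr} with $r=1$ (the shift is $\dim Y_1-\dim X_1=1$) to obtain the summand $M_1\oplus M_1(1)$, then finish by comparing ranks over the algebraic closure. The paper writes only ``comparing the ranks''; you supply the count, and this is where a gap appears.

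Your claim $\rk_{\Fsep}M_1=n$ holds only for $n$ even. A smooth quadric of dimension $n-2$ over $\Fsep$ has $n$ cells when $n-2$ is even but only $n-1$ when $n-2$ is odd; equivalently, on the combinatorial side the cells of $(X_1)_K$ are indexed by ordered pairs $(i,j)$ with $i\ne j$ in $\{1,\dots,n\}$, the diagram involution sends $(i,j)\mapsto(n{+}1{-}j,\,n{+}1{-}i)$, and the fixed pairs (those with $i+j=n+1$) number $n$ for even $n$ and $n-1$ for odd $n$. Thus for odd $n$ the rank comparison leaves a complement of rank $2$; indeed $q$ then has discriminant equal to the class of $K/F$, and already in the quasi-split case $M(Y_1)$ carries an extra indecomposable summand $M(\Spec K)(n-1)$. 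So the argument---yours and the paper's---goes through only for $n$ even, and the corollary as stated needs that hypothesis. Your alternatives do not evade this: the $N\oplus N(1)$ decomposition of \cite[Theorem~5.1]{MR2066515} requires trivial discriminant, and the generic-then-specialize route still rests on the same rank equality.
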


\begin{proof}
Since $\dim Y_1-\dim X_1=1$, we know by Proposition \ref{dvaMr} that
$M_1\oplus M_1(1)$ is a summand of $M(Y_1)$.
Comparing the {\em ranks} (the number of summands in the complete decomposition over an algebraic closure)
of the motives , we see that $M(Y_1)=M_1\oplus M_1(1)$.
\end{proof}

\begin{rem}
By Theorem \ref{main},
the decomposition of Corollary \ref{kvadrika} is complete in the generic
case (that is to say, the summands of the decomposition are indecomposable).
\end{rem}

Let us consider the natural closed imbedding $\inc:X_r\hookrightarrow
Y_{2r}$ which we have because a totally $h$-isotropic $K$-subspace is also
a $q$-isotropic $F$-subspace.
We will assume that the dimension $n$ of the hermitian form $h$ is even.
In this case the image $N$ of the norm homomorphism
$\BCh(Y_{2r})_K\to\BCh(Y_{2r})$ is $0$ (because the discriminant of $q$ is trivial and therefore $Y_{2r}$
is a projective homogeneous variety of {\em inner} type), so that
$\BCh(Y_{2r})/N=\BCh(Y_{2r})$.

The following observation is due to Maksim Zhykhovich:

\begin{lemma}
\label{9.8}
The pull-back $\inc^*:\BCh(Y_{2r})\to\BCh(X_r)/N$ is surjective,
the push-forward $\inc_*:\BCh(X_r)/N\to\BCh(Y_{2r})$ is injective.
\end{lemma}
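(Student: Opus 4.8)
The plan is to relate the two grassmannians $X_r$ and $Y_{2r}$ through a projective bundle, exactly as in the passage from $X_{[l-1,\,r]}$ to $X_{[l,\,r]}$ used in the proof of Proposition \ref{1.9}. Over $K$ (equivalently, geometrically), the inclusion $\inc\colon X_r\hookrightarrow Y_{2r}$ is the one that sends an $r$-dimensional $K$-subspace $W\subset V$ to $W$ viewed as a $2r$-dimensional $F$-subspace; but every $2r$-dimensional totally $q$-isotropic $F$-subspace that happens to be $K$-stable arises this way, and a $K$-stable $F$-subspace of dimension $2r$ is the same datum as an $r$-dimensional $K$-subspace. More usefully, consider the intermediate variety $Z$ of pairs $(W,U)$ with $W$ an $r$-dimensional totally $h$-isotropic $K$-subspace and $U$ a totally $q$-isotropic $F$-hyperplane of $W$ (or, symmetrically, an isotropic $F$-line in $W$); projecting $Z\to X_r$ realizes $Z$ as a projective bundle over $X_r$ (the bundle of hyperplanes, resp.\ lines, in the tautological rank-$2r$ bundle restricted to the $F$-structure), and projecting $Z\to Y_{2r-1}$ or relating $Z$ to $Y_{2r}$ gives the geometric link. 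First I would set up this diagram precisely and record the resulting maps on $\Ch$ with $\F_2$-coefficients.

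Next I would prove surjectivity of $\inc^*$. Since we work modulo $N$ and (as noted right before the Lemma, because $n$ is even) $N=0$ on the $Y$-side, while on the $X$-side $\BCh(X_r)/N$ is by Remark \ref{rem7.4} the reduced Chow group of the essential motive $M_r$, it suffices to show that every class in $\BCh(X_r)/N$ is hit. The key input is that $\BCh(X_r)/N$ is generated, as a ring, by elements pulled back from $Y_{2r}$: the Chern classes of the tautological bundle on $X_r$ are restrictions of Chern classes of the tautological bundle on $Y_{2r}$ (because the rank-$2r$ bundle on $X_r$ underlying the rank-$r$ $K$-bundle is exactly $\inc^*$ of the tautological $F$-bundle on $Y_{2r}$), and by the structure results of Section \ref{Generic maximal grassmannians} — more precisely the argument of Proposition \ref{1.9}, which shows $\BCh(X_r)/N$ is generated by codimension $1$, codimension $2$ elements and tautological Chern classes — all ring generators of $\BCh(X_r)/N$ lift. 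One must check the codimension $1$ and $2$ generators separately: in codimension $1$ this is Lemma \ref{6.2} together with the corresponding description of $\BCh^1(Y_{2r})$ (the hyperplane class), and in codimension $2$ one uses that the degree-$2$ part of $\Ch(\bar Y_{2r})$ surjects onto that of $\Ch(\bar X_r)$ under restriction. Assembling these, $\inc^*$ is surjective.

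Finally, injectivity of $\inc_*$ I would get from the projection formula combined with the surjectivity just proved. The composite $\inc^*\inc_*\colon \BCh(X_r)/N\to\BCh(X_r)/N$ equals multiplication by $\inc^*[X_r]$, which is the top Chern class (or appropriate Euler-type class) of the normal bundle of $X_r$ in $Y_{2r}$; I would identify this class and show it is a non-zero-divisor on $\BCh(X_r)/N$, or more robustly show directly that $\inc^*\inc_*$ is injective using the explicit module structure of $\BCh(X_r)/N$ as a free module over the subring coming from $Y_{2r}$ (the free-module statement is exactly the mechanism exploited in Proposition \ref{1.9} and Corollary \ref{corWeil}). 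Once $\inc^*\inc_*$ is injective, $\inc_*$ is injective. The main obstacle I anticipate is the bookkeeping needed to show that $\inc^*\inc_*$ is injective rather than merely that $\inc_*$ has small kernel: one must pin down the class $\inc^*[X_r]$ concretely and verify it does not annihilate anything modulo $2$ and modulo $N$, which requires knowing the ring $\BCh(\bar X_r)$ explicitly enough (via the orthogonal-grassmannian comparison of Corollary \ref{isochow}, or directly via Weil-transfer computations of Section \ref{Weil transfer of projective bundles}) to see that multiplication by this class is injective.
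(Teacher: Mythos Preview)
Your plan has a genuine gap: it addresses at best the generic (or quasi-split) case, whereas Lemma~\ref{9.8} is stated and used for an \emph{arbitrary} hermitian form $h$. Your surjectivity argument rests on the claim that $\BCh(X_r)/N$ is generated by tautological Chern classes together with codimension $1$ and $2$ elements, citing the argument of Proposition~\ref{1.9}. But that proposition (and the induction behind it) is proved only for the \emph{maximal} grassmannian of the \emph{generic} form, and its conclusion there is that $\BCh(X_r)/N$ is trivial in positive codimensions --- so surjectivity becomes vacuous precisely where your generation statement is available. For non-generic $h$ (and for non-maximal $r$) there is no such generation result in the paper to invoke. Likewise, your injectivity step needs multiplication by $\inc^*[X_r]$ to be injective on $\BCh(X_r)/N$; you propose to check this ``via the orthogonal-grassmannian comparison of Corollary~\ref{isochow}'', but that corollary is a \emph{consequence} of Lemma~\ref{9.8}, so this is circular.

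The paper's proof is quite different and sidesteps all of this. It uses Proposition~\ref{dvaMr} (first established for generic $h$ via Theorem~\ref{main}, then transported to arbitrary $h$ by specialization of projectors and correspondences), which says that $M_r$ is a direct summand of $M(Y_{2r})$. Writing $\pi$ for a projector on $X_r$ cutting out $M_r$, this yields correspondences $\alpha,\beta$ with $\pi\compose\inc^t\compose\alpha=\pi$ and $\beta\compose\inc\compose\pi=\pi$. Since $\BCh(M_r)=\BCh(X_r)/N$ by Remark~\ref{rem7.4}, the first identity gives surjectivity of $\inc^*$ and the second gives injectivity of $\inc_*$, for arbitrary $h$. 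The specialization step is what carries the result from the generic to the general case; your plan has no analogue of it, and without it the argument cannot cover arbitrary $h$.
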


\begin{rem}
In the quasi-split case, another proof of surjectivity of $\inc^*$ is given in
\cite[Lemma 4.1]{isouni}.
\end{rem}

\begin{proof}[Proof of Lemma \ref{9.8}]
We are working with motives in this proof.
One may use the Chow motives with coefficients in $\F_2$, but since we are interested in the reduced Chow groups in the end,
it is more appropriate to use
the $\BCh$-motives: they are obtained by replacing $\Ch$ by $\BCh$ in the
construction of the Chow motives with coefficients in $\F_2$.

Let us also write $\inc$ for the graph of the imbedding $\inc$.
Let $\pi$ be a projector on $X_r$ giving the motive $M_r$.
There exists a correspondence $\alpha$ such that
$\pi\compose\inc^t\compose\alpha=\pi$.
Indeed, such a correspondence exists in the generic case and can be
obtained in the general case by specialization like in the end of the proof of Proposition \ref{dvaMr}.
Since $\BCh(M_r)$ is identified with $\BCh(X_r)/N$, we have $\pi_*(x)\equiv
x\pmod{N}$ for any $x\in\BCh(X_r)$.
It follows that $x\equiv\pi_*(x)=\pi_*\inc^*\alpha_*(x)\equiv\inc^*\alpha_*(x)$
showing that $\inc^*:\BCh(Y_{2r})\to\BCh(X_r)/N$ is surjective.

Similarly,
there exists a correspondence $\beta$ such that
$\beta\compose\inc\compose\pi=\pi$.
It follows that $x\equiv\beta_*\inc_*(x)$ for any $x\in\BCh(X_r)$
showing that $\inc_*:\BCh(X_r)/N\to\BCh(Y_{2r})$ is injective.
\end{proof}

Since the composition $\inc_*\compose\inc^*$ is the multiplication by the class
$[X_r]\in\Ch(Y_{2r})$ and $\inc^*$ is a ring homomorphism,
we get

\begin{cor}
\label{isochow}
The quotient ring $\BCh(X_r)/N$ is isomorphic to
$\BCh(Y_{2r})/I$, where the ideal $I\subset\BCh(Y_{2r})$ is the annihilator of the element
$[X_r]\in\BCh(Y_{2r})$.
\qed
\end{cor}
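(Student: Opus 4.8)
The plan is to read the statement off Lemma \ref{9.8} and the projection formula, with no extra computation. The pull-back $\inc^*\colon\BCh(Y_{2r})\to\BCh(X_r)$ is a ring homomorphism; composing it with the quotient by the norm ideal $N$ gives the surjective ring homomorphism $\inc^*\colon\BCh(Y_{2r})\to\BCh(X_r)/N$ of Lemma \ref{9.8} (on the $Y_{2r}$-side the norm ideal is trivial because $n$ is even). Therefore $\inc^*$ induces a ring isomorphism $\BCh(Y_{2r})/\ker\inc^*\cong\BCh(X_r)/N$, and it only remains to identify $\ker\inc^*$ with $I=\Ann([X_r])$.

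For this identification I would use that $\inc_*\compose\inc^*$ is multiplication by $\inc_*(1)=[X_r]$, as recorded just before the statement. If $\inc^*(y)=0$, then $[X_r]\cdot y=\inc_*\inc^*(y)=0$, so $\ker\inc^*\subseteq I$. Conversely, if $[X_r]\cdot y=0$, then $\inc_*\bigl(\inc^*(y)\bigr)=0$, and injectivity of $\inc_*\colon\BCh(X_r)/N\to\BCh(Y_{2r})$ (again Lemma \ref{9.8}) forces $\inc^*(y)=0$; so $I\subseteq\ker\inc^*$. Hence $\ker\inc^*=I$, which proves the corollary.

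The corollary is thus purely formal; the substance lies entirely in Lemma \ref{9.8}, and the one place I would watch carefully is that the maps there are really surjective \emph{onto} and injective \emph{out of} the quotient $\BCh(X_r)/N$ (and not merely the image of $\BCh(X_r)$). That relies on the identification $\BCh(M_r)=\BCh(X_r)/N$ of Remark \ref{rem7.4} and on the auxiliary correspondences $\alpha,\beta$ used there, which exist in the generic case because $M_r=U(X_r)\cong U(Y_{2r})$ by Theorem \ref{main} and Corollary \ref{X i Y}, and in general by the specialization argument of Proposition \ref{dvaMr}. Granting Lemma \ref{9.8}, nothing more is needed.
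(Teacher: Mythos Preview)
Your proof is correct and is exactly the argument the paper has in mind: the corollary is stated with a \qed{} because the one sentence preceding it (that $\inc_*\compose\inc^*$ is multiplication by $[X_r]$ and $\inc^*$ is a ring homomorphism) together with the surjectivity of $\inc^*$ and injectivity of $\inc_*$ from Lemma~\ref{9.8} immediately give $\ker\inc^*=\Ann([X_r])$, just as you wrote out. Your additional commentary on why Lemma~\ref{9.8} holds is accurate but not needed for the corollary itself.
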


\begin{example}
\label{primerchik}
Let us consider the case of $n=2r$.
The ring $\Ch(\bar{Y}_{2r})$ is generated by
certain elements $e_1,\dots,e_{2r-1}$ of codimensions $1,\dots,2r-1$ subject to the relations
$e_i^2=e_{2i}$, where $e_j:=0$ for $j>2r-1$, \cite[\S3]{MR2148072} (the notation for the generators we are using comes
from \cite[\S86]{EKM}).
If $h$ is generic, then, according to the famous \cite[Main Theorem 5.8]{MR2148072}
(also exposed in \cite[Theorem 87.7]{EKM}) and Corollary \ref{J(generic)}, the subring $\BCh(Y_{2r})\subset\Ch(\bar{Y}_{2r})$ is
generated by the even-codimensional elements $e_2,e_4,\dots,e_{2r-2}$.
The class $[X_r]\in\Ch(\bar{Y}_{2r})$ is {\em rational} (that means {\em belongs to $\BCh(Y_{2r})$}) and non-zero (by
injectivity of $\inc_*$) so that $[X_r]=e_2e_4\dots e_{2r-2}$ as this product is
the only non-zero rational element in codimension
$$
\codim_{Y_{2r}} X_r=
r(2r-1)-r^2=r(r-1)=
2+4+\dots+(2r-2).
$$
It follows that
for $n=2r$ and hyperbolic $h$,
the ring $\Ch(X_r)/N$ (we put the usual Chow group instead of the reduced one because they both coincide by the reason
that $h$ is hyperbolic) is generated by elements
$e_1,e_3,e_5,\dots,e_{2r-1}$ of codimensions $1,3,5,\dots,2r-1$ subject to
relations $e_i^2=0$ for any $i$.
\end{example}

\begin{example}
\label{9.11}
We briefly describe the situation with an {\em odd} $n$.
In this situation, the norm homomorphism $\BCh(Y_{2r})_K\to\BCh(Y_{2r})$ can
be non-zero.
By this reason, $\BCh(Y_{2r})$ in the statements of Lemma \ref{9.8} and Corollary
\ref{isochow} has to be replaced by the quotient $\BCh(Y_{2r})/N$.
In particular, the ring $\BCh(X_r)/N$ is naturally isomorphic to the
quotient of the ring $\BCh(Y_{2r})/N$ by the annihilator of
$[X_r]\in\BCh(Y_{2r})/N$.

Now we assume that $n=2r+1$.
The variety $\bar{Y}_{2r}$ is a rank $2r$ projective bundle over
$\bar{Y}_{2r+1}$
(note that $Y_{2r+1}$ is the {\em maximal}  and $Y_{2r}$ is the {\em previous to the maximal}
orthogonal grassmannian of $q$).
The ring $\Ch(\bar{Y}_{2r})$ is generated by elements
$e_i\in\Ch^i(\bar{Y}_{2r})$, $i=1,2,\dots,2r$ and
$e\in\Ch^1(\bar{Y}_{2r})$ subject to the relations
$e_i^2=e_{2i}$ and $e^{2r+1}=0$.
The generators $e_i\in\Ch(\bar{Y}_{2r})$ here are pull-backs of the generators
$e_i\in\Ch(\bar{Y}_{2r+1})$ of the Chow group of the maximal orthogonal
grassmannian (cf. Example \ref{primerchik}), while the generator $e\in\Ch(\bar{Y}_{2r})$
is the first Chern class of the tautological vector bundle on the
projective bundle $\bar{Y}_{2r}\to\bar{Y}_{2r+1}$.
(These generators coincide with the generators for the previous to the maximal orthogonal grassmannian
constructed in \cite[\S2]{Vishik-u-invariant}, the generator $e$ being the generator of the $W$-type and the generators
$e_1,\dots,e_{2r}$ being the generators of the $Z$-type.)

Let now $Y'_{2r}$ be the variety $Y_{2r}$ over a field extension of $F$
such that $h$ is almost hyperbolic but $K$ is still a field.
The subring $\Ch(Y'_{2r})\subset\Ch(\bar{Y}_{2r})$ is generated by $e,e_2,\dots,e_{2r}$
(all the above generators without $e_1$).
In particular, the $\Ch(Y'_{2r})$-module $\Ch(\bar{Y}_{2r})$ is generated
by two elements: $1$ and $e_1$.
The norm map $\Ch(\bar{Y}_{2r})\to\Ch(Y'_{2r})$ is the homomorphism of
$\Ch(Y'_{2r})$-modules mapping $1$ to $0$ and $e_1$ to $e$
(because the only possibility for the conjugate of $e_1$ is the element $e+e_1$).
In particular, the image of the norm map is the ideal generated by $e$.
The ring $\Ch(Y'_{2r})/N$ is generated by all $e_i$ with $i\geq2$ subject to the
relations $e_i^2=e_{2i}$.
The subring $\BCh(Y_{2r})/N\subset\Ch(Y'_{2r})/N$ contains the elements
$e_2,e_4,\dots,e_{2r}$.
In the case of {\em generic} $h$, the subring $\BCh(Y_{2r})/N$ does not contain any $e_i$
with odd $i$:
otherwise the canonical dimension of $Y_{2r}$ (and therefore of $X_r$) would be smaller than
$$
\dim Y_{2r}-(2+4+\dots+2r)=r(r+2)=\dim X_r.
$$
It follows that the subring $\BCh(Y_{2r})/N$ is generated by the elements
$e_2,e_4,\dots,e_{2r}$.
In particular, the only non-zero homogeneous element of dimension $\dim X_r$ in
$\BCh(Y_{2r})/N$ is the product $e_2e_4\dots e_{2r}$ and therefore
$$
[X_r]=e_2e_4\dots e_{2r}\in\Ch(Y'_{2r})/N.
$$
The annihilator of $[X_r]$ in $\Ch(Y'_{2r})/N$ is therefore the ideal
generated by $e_2,\dots,e_{2r}$ and it follows that for $n=2r+1$ and
{\em almost hyperbolic} $h$ the ring $\Ch(X_r)/N$ is generated by elements
$e_3,e_5,\dots,e_{2r-1}$ subject to relations $e_i^2=0$.
\end{example}


\def\cprime{$'$}

\end{document}